\documentclass[11pt,a4paper,leqno]{amsart}
\usepackage{mathrsfs}
\usepackage{url}

\oddsidemargin =10mm \evensidemargin =10mm \topmargin =10mm
\textwidth =150mm \textheight =200mm
\usepackage[margin=3cm]{geometry}

\usepackage{xspace}
\usepackage{graphics}
\usepackage{amsfonts,amssymb}
\usepackage{amsthm}
\usepackage[all]{xy}
\usepackage{stmaryrd}
\usepackage{color}
\usepackage{titletoc}

\newcommand{\be}{\begin{otherlanguage}{english}}
\newcommand{\ee}{\end{otherlanguage}}

\theoremstyle{definition}
\newtheorem{defn}{Definition}[section]
\newtheorem{rem}[defn]{Remark}
\theoremstyle{plain}
\newtheorem{lem}[defn]{Lemma}
\newtheorem{prop}[defn]{Proposition}
\newtheorem{thm}[defn]{Theorem}
\newtheorem{cor}[defn]{Corollary}

\newtheorem*{thm*}{Theorem}
\newtheorem*{prop*}{Proposition}
\newtheorem*{clm*}{Claim}
\theoremstyle{remark}
\newtheorem{exem}[defn]{\textbf{Example}}
\numberwithin{equation}{section}

\newcommand{\beq}{\begin{equation}}
\newcommand{\eeq}{\end{equation}}

\newcommand*{\rom}[1]{\uppercase\expandafter{\romannumeral #1\relax}}








\begin{document}
\title[]{Some results of Hamiltonian homeomorphisms on closed aspherical surfaces}

\author{Jian Wang}
\address{Max Planck Institute for Mathematics in
the Sciences, Inselstra{\ss}e 22, D-04103 Leipzig,
Germany\newline\indent Chern Institute of Mathematics and Key Lab of
Pure Mathematics and Combinatorics
 of Ministry of Education, Nankai University, Tianjin 300071,
P.R.China}\email{jianwang@mis.mpg.de;\indent wangjian@nankai.edu.cn}




\date{Oct. 21, 2016}
\maketitle
\begin{abstract}On closed symplectically aspherical manifolds, Schwarz proved a classical
result that the action function of a nontrivial Hamiltonian
diffeomorphism is not constant by using Floer homology. In this
article, we generalize Schwarz's theorem to the $C^0$-case on closed
aspherical surfaces. Our methods involve the theory of transverse
foliations for dynamical systems of surfaces inspired by Le Calvez
and its recent progresses. As an application, we prove that the
contractible fixed points set (and consequently the fixed points
set) of a nontrivial Hamiltonian homeomorphism is not connected.
Furthermore, we obtain that the growth of the action width of a
Hamiltonian homeomorphism increases at least linearly, and
 that the group of Hamiltonian homeomorphisms of $\mathbb{T}^2$
and the group of area preserving homeomorphisms isotopic to the
identity of $\Sigma_g$ ($g>1$) are torsion free, where $\Sigma_g$ is
a closed orientated  surface with genus $g$. Finally, we will show
how the $C^1$-Zimmer's conjecture on surfaces deduces from
$C^0$-Schwarz's theorem.

\vspace{3mm}
\begin{flushleft}
\textbf{Mathematics Subject Classification (2010).} 37E30, 37E45,
37J10.
\end{flushleft}

\vspace{3mm}
\begin{flushleft}
\textbf{Keywords.} Rotation vector, Hamiltonian homeomorphisms, the
generalized action function, $C^0$-Schwarz theorem, transverse
foliations of surfaces
\end{flushleft}

\end{abstract}
\bigskip

\section{Introduction}
\subsection{Background}The famous Gromov-Eliashberg Theorem, that the group of
symplectic diffeomorphisms is $C^0$-closed in the full group of
diffeomorphisms, makes us interested in defining a symplectic
homeomorphism as a homeomorphism which is a $C^0$-limit of
symplectic diffeomorphisms. This becomes a central theme of what is
now called ``$C^0$-symplectic topology". There is a family of
problems in symplectic topology that are interesting to be extended
to the continuous analogs of classical smooth objects of the
symplectic world (see, e.g.,
\cite{BS,BHS,Hum,HLS,HLecS,P1,P3,OM,Se,V06}). In the theme of
$C^0$-symplectic topology, there are many questions still open,
e.g., the $C^0$-flux conjecture (see \cite{B14,LMP98,On06}) and the
simplicity of the group of Hamiltonian homeomorphisms of surfaces
(see \cite{Fathi,OM}).

Another noteworthy rigidity phenomenon is the Zimmer program which
attracted many mathematicians to work (see, e.g.,
\cite{BFH,Fish,F2,FS,G,P1,P}). A central conjecture of Zimmer
program \cite{Zim} predicts that lattices in simple Lie groups of
rank $n$ do not act volume-preserving faithfully on compact
manifolds of dimension less than $n$.
\smallskip

Suppose that $(M,\omega)$ is a symplectic manifold. Let
$I=(F_t)_{t\in \mathbb{R}}$ be a Hamiltonian flow on $M$ with
$F_0=\mathrm{Id}_M$ and $F_1=F$. When $M$ is compact, among the
properties of $F$, one may notice that it preserves the volume form
$\omega^n=\omega\wedge\cdots\wedge\omega$ and that the ``rotation
vector'' $\rho_{M,I}(\mu)$ (see Section \ref{subsec:rotation
vector}) of the finite measure $\mu$ induced by $\omega^n$ vanishes.
Let $M$ be a closed oriented surface with genus $g\geq 1$. In this
case, $M$ is a closed aspherical surface with the property
$\pi_2(M)=0$. Let $I=(F_t)_{t\in [0,1]}$ be an identity isotopy on
$M$, that is, $I$ is a continuous path in $\mathrm{Homeo}(M)$ with
$F_0=\mathrm{Id}_M$. We suppose that its time-one map $F$ preserves
the measure $\mu$ induced by $\omega$. It is well known that the
condition  $\rho_{M,I}(\mu)=0$ is equivalent to the fact that the
homeomorphism $F$ is in the $C^0$-closure of
$\mathrm{Ham}(M,\omega)$. In this sense, we call such $I$ a
\emph{Hamiltonian isotopy} and such $F$ a \emph{Hamiltonian
homeomorphism}. In this article, we carry out some foundational
studies of Hamiltonian homeomorphisms (and a more general notion) on
closed aspherical surfaces. We also show the link between Zimmer's
conjecture on surfaces and our work which is of independent
interest.

\smallskip

Let $(M,\omega)$ be a symplectic manifold with $\pi_2(M)=0$. Suppose
that $H : \mathbb{R}\times M \rightarrow \mathbb{R}$, one-periodic
in time, is the Hamiltonian function generating the flow $I$. Denote
by $\mathrm{Fix}_{\mathrm{Cont},I}(F)$ the set of contractible fixed
points of $F$, that is, $x\in\mathrm{Fix}_{\mathrm{Cont},I}(F)$ if
and only if $x$ is a fixed point of $F$ and the oriented loop $I(x):
t\mapsto F_t(x)$ defined on $[0,1]$ is contractible on $M$. The
classical action function is defined, up to an additive constant, on
$\mathrm{Fix}_{\mathrm{Cont},I}(F)$  as follows
\begin{equation}\label{eq:action functional}
\mathcal{A}_{H}(x)=\int_{D_x}\omega-\int_0^1
H(t,F_t(x))\,\mathrm{d}t,
\end{equation}
where $x\in \mathrm{Fix}_{\mathrm{Cont},I}(F)$ and $D_x\subset M$ is
any $2$-simplex with $\partial D_x=I(x)$.  The following deep result
\cite{SM} was proved  by using Floer homology with the real
filtration induced by the action function.

\begin{thm}[Schwarz] Let $(M,\omega)$ be a closed symplectic manifold with
$\pi_2(M)=0$. Let $I=(F_t)_{t\in \mathbb{R}}$ be a Hamiltonian flow
on $M$ with $F_0=\mathrm{Id}_M$ and $F_1=F$ generated by a
Hamiltonian function $H$. Assume that $F\neq \mathrm{Id}_M$. Then
there are $x,y\in\mathrm{Fix}_{\mathrm{Cont},I}(F)$ such that
$\mathcal{A}_{H}(x)\neq\mathcal{A}_{H}(y)$.\end{thm}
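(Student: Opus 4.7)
The plan is to prove Schwarz's theorem via Hamiltonian Floer homology and spectral invariants, following the framework developed by Oh and Schwarz. The hypothesis $\pi_2(M)=0$ makes the Floer setup especially clean: there is no sphere bubbling, the action functional in \eqref{eq:action functional} is single-valued on contractible $1$-periodic orbits (independent of the capping disk), and one obtains an absolute $\mathbb{Z}$-grading by the Conley--Zehnder index after suitable choices.

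First, for a non-degenerate Hamiltonian $H$, I would build the filtered Floer chain complex $(CF_*^\lambda(H),\partial)$ whose generators are the elements of $\mathrm{Fix}_{\mathrm{Cont},I}(F)$, filtered by $\mathcal{A}_H$, with differential counting Floer cylinders. Standard transversality and compactness arguments (considerably simplified by asphericity) produce a well-defined chain complex, and a continuation/PSS construction identifies the total homology with the singular homology $H_*(M;\mathbb{Z}/2)$. For every nonzero class $\alpha\in H_*(M;\mathbb{Z}/2)$ I would then define the spectral invariant
\[
c(\alpha,H)=\inf\bigl\{\lambda\in\mathbb{R}\ :\ \alpha\in\mathrm{im}\bigl(HF_*^\lambda(H)\to HF_*(H)\bigr)\bigr\},
\]
and verify the standard package of properties: $c(\alpha,H)\in\mathrm{Spec}(H):=\mathcal{A}_H(\mathrm{Fix}_{\mathrm{Cont},I}(F))$, Hofer-continuity in $H$, and the triangle inequality with respect to the quantum (here ordinary) intersection product.

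The heart of the argument is the positivity of the spectral pseudo-norm
\[
\gamma(H):=c([M],H)-c([pt],H)\geq 0,
\]
namely the implication $F\neq\mathrm{Id}_M\Longrightarrow\gamma(H)>0$. This is the main obstacle and the original technical contribution of Schwarz. One proves it by contradiction: assuming $\gamma(H)=0$, the triangle inequality forces $\gamma$ to vanish on every iterate $H^{\sharp k}$, and Hofer-continuity propagates this to a neighbourhood in the Hofer metric; a direct analysis of the filtered Floer complex then shows that a $\gamma$-null Hamiltonian must have trivial time-one map, which contradicts $F\neq\mathrm{Id}_M$.

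Granting this positivity, the theorem follows at once. If $\mathcal{A}_H$ were constant on $\mathrm{Fix}_{\mathrm{Cont},I}(F)$, then $\mathrm{Spec}(H)$ would reduce to a single point $\{c\}$, forcing $c([M],H)=c([pt],H)=c$ and hence $\gamma(H)=0$; by the preceding paragraph this would give $F=\mathrm{Id}_M$, contradicting the hypothesis. Consequently there must exist $x,y\in\mathrm{Fix}_{\mathrm{Cont},I}(F)$ with $\mathcal{A}_H(x)\neq\mathcal{A}_H(y)$.
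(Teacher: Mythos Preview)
The paper does not prove this theorem at all: it is quoted as a known background result due to Schwarz \cite{SM}, with the single remark that it ``was proved by using Floer homology with the real filtration induced by the action function.'' There is therefore no proof in the paper to compare your proposal against; the paper's own contribution is the $C^0$ analogue on closed aspherical surfaces (Theorem~\ref{prop:F is not constant if the contractible fixed points is finite}), proved by entirely different, two-dimensional topological methods (transverse foliations, linking numbers, Jaulent's theorem).

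Your sketch is a reasonable outline of the Floer-theoretic argument and is consistent with what the paper says about Schwarz's method. The one place where you are genuinely hand-waving is the implication $\gamma(H)=0\Rightarrow F=\mathrm{Id}_M$: the sentence ``Hofer-continuity propagates this to a neighbourhood in the Hofer metric; a direct analysis of the filtered Floer complex then shows that a $\gamma$-null Hamiltonian must have trivial time-one map'' does not describe an actual mechanism. In Schwarz's paper this step goes through the pair-of-pants product together with an energy--capacity/displacement argument (roughly: if $F\neq\mathrm{Id}_M$ then some ball is displaced, and a product/cap-length estimate forces $c([M],H)-c([pt],H)$ to dominate a positive displacement energy). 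If you want the proposal to stand as a proof rather than a sketch, that is the step you need to fill in; the rest of your outline (single-valued action under $\pi_2(M)=0$, filtered Floer complex, PSS, spectrality and Hofer-continuity of $c(\alpha,\cdot)$, and the final contradiction once $\gamma>0$ is known) is correct and standard. You should also say a word about passing from non-degenerate to degenerate $H$ via $C^2$-small perturbations and the continuity of spectral invariants, since the theorem as stated makes no non-degeneracy assumption.
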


Let $M$ be a closed oriented surface with genus $g\geq 1$ and $F$ be
the time-one map of an identity isotopy $I$ on $M$. We denote by
$\mathrm{Homeo}(M)$ (resp. $\mathrm{Diff}(M)$, $\mathrm{Diff}^1(M)$)
the set of homeomorphisms (resp. diffeomorphisms,
$C^1$-diffeomorphisms) of $M$. Denote by $\mathcal {M}(F)$ the set
of Borel finite measures on $M$ that are invariant by $F$ and have
no atoms on $\mathrm{Fix}_{\mathrm{Cont},I}(F)$.  Through the
WB-property \cite{W2} (see Definition \ref{def:wb property and b
property} below),
 the classical action of Hamiltonian
diffeomorphism has been generalized
 to the case of Hamiltonian homeomorphism (and to more general
cases) \cite[page 86]{W2} (or see \cite{W3}):

\begin{thm}\label{thm:PW}
Let $F\in\mathrm{Homeo}(M)$ be the time-one map of an identity
isotopy $I$ on $M$. Suppose that $\mu\in\mathcal {M}(F)$ and
$\rho_{M,I}(\mu)=0$. In each of the following cases:
\begin{itemize}
  \item $F\in\mathrm{Diff}(M)$ (not necessarily $C^1$);
  \item $I$ satisfies the WB-property and the measure $\mu$ has full
  support;
  \item $I$ satisfies the WB-property and the measure $\mu$ is ergodic,
\end{itemize} an action function $L_\mu$ can be defined,
which generalizes the classical one given in Eq. \ref{eq:action
functional}.\end{thm}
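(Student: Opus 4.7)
The plan is to construct $L_\mu$ by lifting the entire dynamical picture to the universal cover $\pi: \widetilde{M} \to M$, where the asphericity $\pi_2(M) = 0$ together with $M$ being a surface of genus $g \geq 1$ makes $\widetilde{M}$ contractible. Consequently $\widetilde{\omega} := \pi^*\omega$ is exact, and we may fix a primitive $\lambda$ with $d\lambda = \widetilde{\omega}$. I would also lift the isotopy $I$ to $\widetilde{I} = (\widetilde{F}_t)$ with $\widetilde{F}_0 = \mathrm{Id}$. The guiding principle is that for a contractible fixed point $x$ of $F$ with lift $\widetilde{x}$ satisfying $\widetilde{F}_1(\widetilde{x}) = \widetilde{x}$, the classical definition \eqref{eq:action functional} may be recast via Stokes as
\begin{equation*}
\mathcal{A}_H(x) = \int_0^1 \lambda\bigl(\partial_t \widetilde{F}_t(\widetilde{x})\bigr)\,dt - \int_0^1 H(t, F_t(x))\,dt,
\end{equation*}
an expression that no longer mentions a bounding $2$-simplex and that decouples the topological contribution (first integral) from the Hamiltonian one (second integral).

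In case (1), where $F$ is a diffeomorphism, the ``action density'' $\ell(x) = \int_0^1 \lambda(\partial_t \widetilde{F}_t(\widetilde{x}))\,dt$ is well-defined modulo a deck-transformation ambiguity depending on the chosen lift $\widetilde{x}$. I would define $L_\mu(x)$ by subtracting the Hamiltonian-type term appropriately and then integrating against $\mu$; the key observation is that $\rho_{M,I}(\mu) = 0$ is precisely what kills the ambiguity, since the rotation vector measures the average deck-transformation picked up by the cocycle $\widetilde{x} \mapsto \widetilde{F}_1(\widetilde{x})$. In cases (2) and (3), where $F$ may only be continuous, the velocity $\partial_t \widetilde{F}_t(\widetilde{x})$ does not exist pointwise. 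Here the WB-property must replace differentiability: it provides the uniform boundedness of displacements in $\widetilde{M}$ that allows one to interpret the line integral of $\lambda$ along the continuous trajectory (via, e.g., Riemann sums along dyadic subdivisions, or by $C^0$-approximation of $I$ by smooth Hamiltonian isotopies and passage to the limit). In the ergodic case (3) Birkhoff's theorem collapses the $\mu$-essential value of any Borel-measurable version of $\ell$ to a constant; in the full-support case (2) the density of regular points combined with continuity of the primitive data yields a coherent global value.

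The main obstacle is establishing that the construction is intrinsic, namely that $L_\mu$ is independent of the choices of $\widetilde{I}$, $\lambda$, and the reference lifts $\widetilde{x}$ up to a universal additive constant, and that it reduces to $\mathcal{A}_H$ in the smooth Hamiltonian setting. Changing $\lambda$ by an exact form $df$ shifts $\ell$ by a coboundary whose $\mu$-integral vanishes by $F$-invariance of $\mu$; changing $\widetilde{x}$ shifts by a deck-transformation cocycle whose $\mu$-integral is essentially the pairing of $\rho_{M,I}(\mu)$ with a class determined by $\lambda$, which vanishes by hypothesis. Compatibility with $\mathcal{A}_H$ at $\mu = \delta_x$ for a contractible fixed point reduces to Stokes' theorem on $\widetilde{M}$. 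The deepest technical point is the genuinely $C^0$ case: making sense of the line integral of $\lambda$ along a merely continuous isotopy and showing the result depends measurably on the base point requires the WB-property in an essential way, both to secure well-definedness on a $\mu$-conull set and to validate the smooth approximation argument that ties the new definition to the classical one.
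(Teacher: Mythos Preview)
Your proposal has a genuine gap: it rests on structures that are not available in the setting of the theorem, and it misreads what the WB-property provides.

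First, in the generality of Theorem~\ref{thm:PW} there is no symplectic form $\omega$ and no Hamiltonian $H$. The measure $\mu$ is an arbitrary $F$-invariant Borel finite measure with no atoms on $\mathrm{Fix}_{\mathrm{Cont},I}(F)$; it need not arise from any $2$-form, so there is no $\widetilde{\omega}=d\lambda$ to integrate, and the isotopy $I$ is a bare $C^0$ isotopy with no generating function. Your formula $\int_0^1 \lambda(\partial_t\widetilde{F}_t(\widetilde{x}))\,dt - \int_0^1 H(t,F_t(x))\,dt$ therefore has no meaning in cases (2) and (3). Second, the WB-property is not a displacement bound: it says that for each $\widetilde{a}\in\mathrm{Fix}(\widetilde{F})$ the \emph{linking numbers} $i(\widetilde{F};\widetilde{a},\widetilde{b})$ are bounded as $\widetilde{b}$ ranges over $\mathrm{Fix}(\widetilde{F})$. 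This is a statement about how pairs of lifted fixed points wind around each other under $\widetilde{I}$, and it says nothing that would let you make sense of a Stieltjes-type line integral of a $1$-form along a merely continuous trajectory.

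The paper's construction (recalled in Section~\ref{sec:definition of action function}) is of an entirely different nature and is purely topological. One does not attempt to define $L_\mu$ at a single fixed point; instead one defines the \emph{action difference} $i_\mu(\widetilde{F};\widetilde{a},\widetilde{b})$ between two lifted fixed points as
\[
i_\mu(\widetilde{F};\widetilde{a},\widetilde{b})=\int_{M\setminus\pi(\{\widetilde{a},\widetilde{b}\})} i(\widetilde{F};\widetilde{a},\widetilde{b},z)\,\mathrm{d}\mu(z),
\]
where the integrand is a linking number measuring how the orbit of a positively recurrent point $z$ winds, in the annulus $\mathbf{S}\setminus\{\widetilde{a},\widetilde{b}\}$, relative to $\widetilde{a}$ and $\widetilde{b}$. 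The three hypotheses enter precisely to guarantee $\mu$-integrability of this linking number (this is where the WB-property is actually used). A cocycle identity then shows $i_\mu$ is a coboundary, yielding $l_\mu$ on $\mathrm{Fix}(\widetilde{F})$; and the vanishing $\rho_{M,I}(\mu)=0$ forces $i_\mu(\widetilde{F};\widetilde{a},\alpha(\widetilde{a}))=0$ for every deck transformation $\alpha$, so $l_\mu$ descends to $L_\mu$ on $\mathrm{Fix}_{\mathrm{Cont},I}(F)$. The measure $\mu$ is thus not an afterthought for averaging ambiguities but is built into the very definition of the action difference.
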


The contributions of this paper can be summarized as following:

\begin{itemize}
  \item In the classical case, one can prove that the action
function is a constant on a connected set of contractible fixed
points by Sard's theorem. In each of the generalized cases given in
Theorem \ref{thm:PW}, we prove that this property still holds (see
Proposition \ref{prop:action is constant on the path connected
components}). Our method is purely topological.

  \item Given the generalized action function, one may ask whether
Schwarz's theorem is still true. We show in this article that it is
true in the second case of Theorem \ref{thm:PW} but no longer true
when the measure $\mu$ has no full support even if
$F\in\mathrm{Diff}(M)$ (see Theorem \ref{prop:F is not constant if
the contractible fixed points is finite}).
  \item As applications of Proposition \ref{prop:action is constant on the path connected
components} and Theorem \ref{prop:F is not constant if the
contractible fixed points is finite}, we obtain that the
contractible fixed points set (and consequently the fixed points
set) of a nontrivial Hamiltonian homeomorphism is not connected. We
 emphasize that this is merely a 2-dimension phenomenon.
Indeed, Buhovsky et al. 
\cite{BHS} have recently constructed a Hamiltonian homeomorphism
with a single fixed point on any closed symplectic manifold of
dimension at least four. However, in the classical $C^1$-case this
property always holds when $(M^{2n},\omega)$ ($n\geq1$) is a closed
symplectically aspherical  manifold by Schwarz's theorem. It seems
to us that one can not obtain the result in the $C^0$-case on
dimension two through $C^0$-approximation by Hamiltonian
diffeomorphisms.
  \item  We generalize Polterovich's result \cite{P}
on the growth of the action width to the $C^0$-case, based on which
we obtain that the groups $\mathrm{Hameo}(\mathbb{T}^2,\mu)$ and
$\mathrm{Homeo}_*(\Sigma_g,\mu)$ ($g>1$) (see below for the
notations) are torsion free, where $\mu$ is a measure with full
support.
\item We give an
alternative proof of the $C^1$-Zimmer's conjecture on surfaces when
the measure is a Borel finite measure with full support from
$C^0$-Schwarz's theorem.
\end{itemize}

This paper extends our unpublished manuscripts \cite{W2,W3} with
substantial additional contents. A preceding work of this paper
\cite{W3} is under review of a journal. Please refer to \cite{W2} or
\cite{W3} for further details and related results.

\subsection{Statement of results}\quad

Before stating our main results, let us recall the WB-property and
B-property.

Let $M$ be a surface homeomorphic to the complex plane $\mathbb{C}$
and let $I=(F_t)_{t\in[0,1]}$ be an identity isotopy on $M$. For
every two different fixed points $z$ and $z'$ of $F_1$, the
\emph{linking number} $i_I(z,z')\in \mathbb{Z}$ is the degree of the
map $\xi: \mathbf{S}^1\rightarrow \mathbf{S}^1$ defined by
\[\xi(e^{2i\pi t})=\frac{h\circ F_{t}(z')-h\circ F_{t}(z)}
{|h\circ F_{t}(z')-h\circ F_{t}(z)|}\,,\] where
$h:M\rightarrow\mathbb{C}$ is a homeomorphism. The linking number is
independent of $h$.\smallskip

Let $F$ be the time-one map of an identity isotopy
$I=(F_t)_{t\in[0,1]}$ on a closed oriented  surface $M$ of genus
$g\geq 1$, and $\widetilde{F}$ be the time-one map of the lifted
identity isotopy $\widetilde{I}=(\widetilde{F}_t)_{t\in[0,1]}$ of
$I$ on the universal cover $\widetilde{M}$ of $M$. Assume that $\pi:
\widetilde{M}\rightarrow M$ is the covering map. Denote by $\Delta$
(resp. $\widetilde{\Delta}$) the diagonal of
$\mathrm{Fix}_{\mathrm{Cont},I}(F)\times
\mathrm{Fix}_{\mathrm{Cont},I}(F)$ (resp.
$\mathrm{Fix}(\widetilde{F})\times \mathrm{Fix}(\widetilde{F})$).

We write by $\mathrm{Homeo}_*(M)$ the identity component of the
topological space of $\mathrm{Homeo}(M)$ for the compact-open
topology. When $g>1$, it is well known that the fundamental group
$\pi_1(\mathrm{Homeo}_*(M))$ is trivial \cite{H2}. It implies that
any two identity isotopies $I ,I' \subset \mathrm{Homeo}_*(M)$ with
fixed endpoints are homotopic. Hence, $I$ is unique up to homotopy,
which implies that $\widetilde{F}$ is uniquely defined and
independent of the choice of the isotopy from $\mathrm{Id}_M$ to
$F$. When $g=1$, $\widetilde{F}$ depends on the isotopy $I$ since
$\pi_1(\mathrm{Homeo}_*(M))\simeq\mathbb{Z}^2$ \cite{H1}.

Note that the universal cover $\widetilde{M}$ is homeomorphic to
$\mathbb{C}$. We define the \emph{linking number}
$i(\widetilde{F};\widetilde{z},\widetilde{z}\,')$ for each pair
$(\widetilde{z},\widetilde{z}\,')\in(\mathrm{Fix}(\widetilde{F})\times
\mathrm{Fix}(\widetilde{F}))\setminus\widetilde{\Delta}$ as
\begin{equation}\label{eq:linking number for two fixed points}
i(\widetilde{F};\widetilde{z},\widetilde{z}\,')=i_{\widetilde{I}}
(\widetilde{z},\widetilde{z}\,').
\end{equation}

\begin{defn}\label{def:wb property and b property} We say that
$I$ satisfies \emph{the weak boundedness property at
$\widetilde{a}\in\mathrm{Fix}(\widetilde{F})$}, written WB-property
at $\widetilde{a}$, if there exists a positive number
$N_{\widetilde{a}}$ such that
$|i(\widetilde{F};\widetilde{a},\widetilde{b})|\leq
N_{\widetilde{a}}$ for all $\widetilde{b}\in
\mathrm{Fix}(\widetilde{F})\setminus\{\widetilde{a}\}$. We say that
$I$ satisfies \emph{the weak boundedness property}, denoted
WB-property, if it satisfies the weak boundedness property at every
$\widetilde{a}\in \mathrm{Fix}(\widetilde{F})$. Let
$\widetilde{X}\subseteq\mathrm{Fix}(\widetilde{F})$. We say that $I$
satisfies \emph{the boundedness property on $\widetilde{X}$},
written B-property on $\widetilde{X}$, if there exists a positive
number $N_{\widetilde{X}}$ such that
$|i(\widetilde{F};\widetilde{a},\widetilde{b})|\leq
N_{\widetilde{X}}$ for all
$(\widetilde{a},\widetilde{b})\in\widetilde{X}\times
\mathrm{Fix}(\widetilde{F})$ with $\widetilde{a}\neq\widetilde{b}$.
We say that $I$ satisfies \emph{the boundedness property}, denoted
B-property, if $\widetilde{X}=\mathrm{Fix}(\widetilde{F})$.
\end{defn}

Obviously, the B-property implies the WB-property. It has been
proved that the WB-property is satisfied if $F\in\mathrm{Diff}(M)$
and that the B-property is satisfied if $F\in\mathrm{Diff}^1(M)$
\cite{W2}. Moreover, the set of all WB-property points of $I$, that
is, the set $$\{\widetilde{a}\in\mathrm{Fix}(\widetilde{F})\mid I
\mbox{ satisfies the WB-property at } \widetilde{a}\}$$ is shown
dense in $\mathrm{Fix}(\widetilde{F})$ \cite{Ler14}. In Lemma
\ref{lem:fps is connected and B property} below, we prove that $I$
satisfies the B-property if the number of the connected components
of $\mathrm{Fix}_{\mathrm{Cont},I}(F)$ is finite.
\smallskip

We say that a homeomorphism $F$ is \emph{$\mu$-symplectic} if
$\mu\in \mathcal {M}(F)$ has full support. An identity isotopy $I$
is \emph{$\mu$-Hamiltonian} if the time-one map $F$ is
$\mu$-symplectic and $\rho_{M,I}(\mu)=0$. A homeomorphism $F$ is
\emph{$\mu$-Hamiltonian} if there exists a $\mu$-Hamiltonian isotopy
$I$ such that the time-one map of $I$ is $F$. The main results
 of this article are summarized as follows.

\begin{prop}\label{prop:action is constant on the path connected
components} Under the hypotheses of Theorem \ref{thm:PW}, the action
function defined in Theorem \ref{thm:PW} is a constant on each
connected component of $\mathrm{Fix}_{\mathrm{Cont},I}(F)$.
\end{prop}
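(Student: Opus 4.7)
The plan is to reduce the proposition to a local constancy statement: any locally constant function on a topological space is constant on each connected component, since its level sets form an open partition that connectedness forbids from being nontrivial. Hence it suffices to show that for every $x_0\in\mathrm{Fix}_{\mathrm{Cont},I}(F)$ there is an open neighbourhood $U\subset M$ with $L_\mu(y)=L_\mu(x_0)$ for all $y\in U\cap\mathrm{Fix}_{\mathrm{Cont},I}(F)$.

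For the smooth regime of Theorem \ref{thm:PW}, local constancy admits a direct cylinder argument: if $y$ is close to $x_0$ and $\gamma$ is a short path in $M$ joining $x_0$ to $y$, then $\Phi(s,t)=F_t(\gamma(s))$ is a cylinder whose boundary loops $I(x_0), I(y)$ are contractible and can be capped by $2$-simplices $D_{x_0}, D_y$ to produce a $2$-sphere in $M$. Since $\pi_2(M)=0$, the $\omega$-area of that sphere vanishes, and a Stokes computation exploiting $\omega(\partial_s\Phi,\partial_t\Phi)=-\partial_s(H_t\circ\Phi)$ rearranges the identity into $\mathcal{A}_H(y)=\mathcal{A}_H(x_0)$. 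In the two $C^0$ cases I would instead lift to the universal cover $\pi\colon\widetilde{M}\to M$, fix a lift $\widetilde{x}_0\in\mathrm{Fix}(\widetilde{F})$ of $x_0$, and invoke the construction of $L_\mu$ from \cite{W2,W3} to write the difference $L_\mu(y)-L_\mu(x_0)$ as a $\mu$-integral of the linking-number differences $i(\widetilde{F};\widetilde{y},\widetilde{z})-i(\widetilde{F};\widetilde{x}_0,\widetilde{z})$, where $\widetilde{y}$ is the close lift of $y$. Since the linking number, being defined as a degree, is integer-valued and continuous in its first argument away from its second, this integrand vanishes for $\widetilde{z}$ outside a small $\pi$-saturated neighbourhood of the orbit of $x_0$; and because $\mu$ puts no atom on $\mathrm{Fix}_{\mathrm{Cont},I}(F)$, the remaining contribution tends to zero as $U$ shrinks to $\{x_0\}$.

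The main obstacle I anticipate is justifying the passage $U\to\{x_0\}$ uniformly in the two $C^0$ cases. The WB-property at $\widetilde{x}_0$ supplies the bound $|i(\widetilde{F};\widetilde{x}_0,\widetilde{z})|\le N_{\widetilde{x}_0}$, which serves as an integrable dominator, but one must also control $|i(\widetilde{F};\widetilde{y},\widetilde{z})|$ uniformly for $\widetilde{y}$ near $\widetilde{x}_0$, and verify that in the ergodic subcase (where $L_\mu$ is built from a $\mu$-almost everywhere Birkhoff-type limit) small deformations within $\mathrm{Fix}_{\mathrm{Cont},I}(F)$ do not disturb the asymptotic limits.
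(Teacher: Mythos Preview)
Your proposal has two genuine gaps.

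\textbf{Gap 1: the integrand is misidentified.} The action difference is
\[
L_\mu(y)-L_\mu(x_0)=i_\mu(\widetilde{F};\widetilde{x}_0,\widetilde{y})=\int_{M\setminus\{x_0,y\}} i(\widetilde{F};\widetilde{x}_0,\widetilde{y},z)\,\mathrm{d}\mu(z),
\]
where the integrand is the \emph{three}-argument linking number of Definition~\ref{def:Intersection number density}, defined for $\mu$-a.e.\ positively recurrent point $z$ as a Birkhoff-type limit $\lim L_{n_k}/\tau_{n_k}$. The formula you use, a sum over lifts of two-point linking numbers $i(\widetilde{F};\widetilde{y},\widetilde{z})-i(\widetilde{F};\widetilde{x}_0,\widetilde{z})$, is valid only when $z\in\mathrm{Fix}_{\mathrm{Cont},I}(F)$ (Equation~\ref{linking number of fixed points}). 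For non-fixed recurrent $z$ --- and this is where most of the $\mu$-mass can sit --- your local-constancy-of-degree argument simply does not apply to the actual integrand.

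\textbf{Gap 2: continuity is not local constancy.} Even on the part of the integral to which your argument does apply, what you establish is that the integrand is supported in a small neighbourhood of $x_0$ and bounded by the WB-constant. This yields $|L_\mu(y)-L_\mu(x_0)|\le 2N\cdot\mu(\text{small neighbourhood})\to 0$ as $y\to x_0$: continuity of $L_\mu$, which is exactly Proposition~\ref{prop:the continuity of lmu} in the paper, not local constancy. A continuous real-valued function on a connected set need not be constant; you never exploit any integrality of $L_\mu(y)-L_\mu(x_0)$ (there is none).

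The paper's proof is structurally different. It does not argue via local constancy at all. Instead it shows that the integrand $i(\widetilde{F};\widetilde{a},\widetilde{b},z)$ \emph{vanishes identically} for every $z$ and every pair $\widetilde{a},\widetilde{b}$ lying over the connected component $X$. For non-fixed recurrent $z$, the key device is a topological lemma (Lemma~\ref{lem:topology lemma of connected set}) showing that a suitable lift $\widetilde{Y}_{\epsilon_0}$ of $X$ is $\epsilon$-chain connected for \emph{all} small $\epsilon$; combined with the fact that $L_k(\widetilde{F};\widetilde{a},\widetilde{b},z)=0$ whenever $\widetilde{d}(\widetilde{a},\widetilde{b})$ is small (depending on $z,k$), the cocycle identity of Proposition~\ref{lem:i is 3coboundary for point} then propagates the vanishing from close pairs to all pairs in $\widetilde{Y}_{\epsilon_0}$. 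For $z\in\mathrm{Fix}_{\mathrm{Cont},I}(F)$, Lemmas~\ref{rem:linking number on connected set} and~\ref{lem:the linking number on a unbounded set} give constancy of the two-point linking number on connected pieces of $\mathrm{Fix}(\widetilde{F})$, forcing the summands in Equation~\ref{linking number of fixed points} to cancel. The chain-connectedness step is what lets you get exact vanishing rather than mere smallness, and it is the missing idea in your sketch.
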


\begin{thm}\label{prop:F is not
constant if the contractible fixed points is finite} Let $F$ be the
time-one map of a $\mu$-Hamiltonian isotopy $I$. If $I$ satisfies
the WB-property and $F\neq\mathrm{Id}_{M}$, the action function
defined in Theorem \ref{thm:PW} is not constant.
\end{thm}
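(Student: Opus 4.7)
The plan is to argue by contradiction: assuming $L_\mu$ is constant on $\mathrm{Fix}_{\mathrm{Cont},I}(F)$, I would build a null-homotopic closed curve positively transverse to a Le Calvez foliation, and use it to force two contractible fixed points to have different action. The underlying idea mirrors the smooth Schwarz argument, but replaces Floer-theoretic spectral invariants by the equivariant Brouwer theory of Le Calvez together with the linking-number definition of $L_\mu$ that is available under the WB-property.

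First, I would replace $I$ by a maximal isotopy $I'$ homotopic to $I$ (with the same set of contractible fixed points) via Jaulent's refinement of Le Calvez's equivariant foliation theorem. This yields a singular foliation $\mathcal{F}$ on $M$ with singular set $\mathrm{Fix}_{\mathrm{Cont},I'}(F)$, such that every orbit of $I'$ on $M\setminus \mathrm{Sing}(\mathcal{F})$ is positively transverse to $\mathcal{F}$. Since the generalized action function of Theorem \ref{thm:PW} depends only on the homotopy class of $I$ rel endpoints, $L_\mu$ remains constant when computed from $I'$, so I may work with $I'$ throughout.

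Next I would exploit $\rho_{M,I}(\mu)=0$ together with full support of $\mu$ and $F\neq \mathrm{Id}_M$ to manufacture a null-homotopic closed transverse loop. The set $M\setminus \mathrm{Fix}(F)$ has positive $\mu$-measure, and an Atkinson-type recurrence argument applied to the displacement cocycle $z\mapsto \widetilde{F}(\widetilde{z})-\widetilde{z}$ in the universal cover produces a $\mu$-generic point $z\notin \mathrm{Fix}(F)$ with return times $n_k\to \infty$ along which $\widetilde{F}^{n_k}(\widetilde{z})\to \widetilde{z}$. Concatenating a long trajectory segment of $z$ under $I'$ with a short closing arc in $M\setminus \mathrm{Fix}_{\mathrm{Cont},I'}(F)$ yields a closed curve $\Gamma$ positively transverse to $\mathcal{F}$ whose lift to $\widetilde{M}$ closes up; hence $\Gamma$ is null-homotopic in $M$ and bounds a disk $D\subset M$. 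Because a non-singular foliation on a disk cannot be transverse to its boundary circle, $D$ must contain at least one singularity $z_{*}\in \mathrm{Fix}_{\mathrm{Cont},I'}(F)$ of $\mathcal{F}$.

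Finally I would compare $L_\mu(z_{*})$ with $L_\mu(z_0)$ for a second fixed point $z_0\in\mathrm{Fix}_{\mathrm{Cont},I'}(F)\setminus D$ (whose existence follows, if need be, by shrinking $\Gamma$ inside $D$ or enlarging it so that some contractible fixed point lies on each side, always using that $\mu$ has full support). Writing $L_\mu$ as an average of linking numbers against lifts, with convergence ensured by the WB-property, the difference decomposes as $\int_M \bigl(i(\widetilde{F};\widetilde{z}_{*},\widetilde{w})-i(\widetilde{F};\widetilde{z}_0,\widetilde{w})\bigr)\,d\mu(w)$. The transverse foliation supplies a coherent sign to this integrand according to whether $w$ lies inside or outside $D$, producing a nonzero contribution of order $\mu(D)>0$ and hence $L_\mu(z_{*})\neq L_\mu(z_0)$, contradicting constancy.

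The main obstacle is this last step: turning the transverse foliation structure into a quantitative lower bound for the jump of the average linking number across a contractible transverse loop. In the smooth case the analogous identity reduces to $\int_D \omega$; in the $C^0$ setting one must extract the same quantity purely topologically from $\mathcal{F}$ and the WB-normalized linking number, while carefully controlling the contribution of fixed points of $F$ that may accumulate inside $D$, for which the individual linking numbers are undefined but whose aggregate weight is bounded via the WB-estimate and Proposition \ref{prop:action is constant on the path connected components}.
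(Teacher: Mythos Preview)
Your proposal has the right ingredients (Jaulent's maximal isotopy and Le Calvez's transverse foliation) but the final step, which you yourself flag as the main obstacle, contains both a wrong formula and a genuine missing idea.

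First, the action difference $L_\mu(z_*)-L_\mu(z_0)$ is \emph{not} $\int_M\bigl(i(\widetilde F;\widetilde z_*,\widetilde w)-i(\widetilde F;\widetilde z_0,\widetilde w)\bigr)\,d\mu(w)$. The integrand defining $i_\mu$ is the \emph{three-argument} linking number $i(\widetilde F;\widetilde a,\widetilde b,z)$ of Definition~\ref{def:Intersection number density}, obtained for a generic recurrent point $z$ as a Birkhoff-type limit of algebraic intersection numbers $\widetilde\gamma\wedge\widetilde\Gamma^n_{\widetilde I_1,z}$ with a chosen path $\widetilde\gamma$ from $\widetilde a$ to $\widetilde b$. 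Only when $z$ is itself a contractible fixed point does this split as a (sum over lifts of) difference of two-argument linking numbers (Formula~\ref{linking number of fixed points}). So the ``coherent sign according to whether $w$ lies inside or outside $D$'' picture does not match the actual definition, and your integrand is not even defined for non-fixed $w$.

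Second, the paper does not pass through a closed transverse loop. The sign is produced by choosing the path $\widetilde\gamma$ itself to be (a lift of) a \emph{leaf} $\lambda$ of the transverse foliation through a non-fixed recurrent point. The key dynamical input (Propositions~10.4, 4.1, 4.3, 6.1 of \cite{P1,P3}) is that $\alpha(\lambda)$ and $\omega(\lambda)$ land in distinct components of the singular set; one then takes $a\in\alpha(\lambda)$, $b\in\omega(\lambda)$. For every $z'\in\mathrm{Rec}^+(F)\setminus X$ the intersection $\lambda\wedge\Gamma^n_{I',z'}$ is automatically $\ge 0$, because the positively transverse trajectory can only cross the leaf $\lambda$ in one direction. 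Strict positivity on a free disk of positive measure around the original recurrent point on $\lambda$ then yields $I_\mu(\widetilde F;a,b)>0$ directly, with the infinite-fixed-point case handled by approximating $\widetilde\lambda$ by compact paths and invoking Fatou. There is no contradiction argument, no Atkinson recurrence, and no closed loop. Note also that in your construction the short closing arc has no reason to be transverse to $\mathcal F$, so your $\Gamma$ need not be a transverse loop in the first place.
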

Theorem \ref{prop:F is not constant if the contractible fixed points
is finite} is a generalization of Schwarz's theorem  on closed
oriented surfaces. The main tools we use in its proof are the theory
of transverse foliations for dynamical systems of surfaces inspired
by Le Calvez \cite{P1,P3} and its recent progress
\cite{J}.\smallskip


Recall the classical version of Arnold conjecture for surface
homeomorphisms  due to Matsumoto \cite{Ma} (see also \cite{P1}): any
Hamiltonian homeomorphism has at least three contractible fixed
points (see Theorem \ref{thm:hamiltonian map has at least 3
contricible fixed points} below).

As a consequence of Proposition \ref{prop:action is constant on the
path connected components} and Theorem \ref{prop:F is not constant
if the contractible fixed points is finite}, we have the following
theorem:

\begin{thm}\label{thm:connected and Id}
Let $F$ be the time-one map of a $\mu$-Hamiltonian isotopy $I$. If
the set $\mathrm{Fix}_{\mathrm{Cont},I}(F)$ is connected, then $F$
must be $\mathrm{Id}_{M}$. In particular, if  $\mathrm{Fix}(F)$ is
connected, then $F$ must be $\mathrm{Id}_{M}$.
\end{thm}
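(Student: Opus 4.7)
The plan is to deduce Theorem \ref{thm:connected and Id} by contradiction, chaining the following results already in hand: the Matsumoto analogue of the Arnold conjecture (at least three contractible fixed points), Lemma \ref{lem:fps is connected and B property} (finitely many components of $\mathrm{Fix}_{\mathrm{Cont},I}(F)$ yields the B-property), Proposition \ref{prop:action is constant on the path connected components} (action constant on each connected component), and Theorem \ref{prop:F is not constant if the contractible fixed points is finite} (action not constant when $F\neq \mathrm{Id}_M$).

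For the main statement, I would assume $F\neq\mathrm{Id}_M$ and seek a contradiction. The Matsumoto theorem forces $\mathrm{Fix}_{\mathrm{Cont},I}(F)\neq\emptyset$, so the connectedness hypothesis says it has exactly one connected component. Lemma \ref{lem:fps is connected and B property} then supplies the B-property (and in particular the WB-property) for $I$. Since $I$ is $\mu$-Hamiltonian, $\mu$ has full support and $\rho_{M,I}(\mu)=0$, placing us in the second case of Theorem \ref{thm:PW}; the generalized action function $L_\mu$ is therefore well defined. Proposition \ref{prop:action is constant on the path connected components} then makes $L_\mu$ constant on this unique component, i.e.\ on all of $\mathrm{Fix}_{\mathrm{Cont},I}(F)$, which directly contradicts Theorem \ref{prop:F is not constant if the contractible fixed points is finite}. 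So $F=\mathrm{Id}_M$.

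For the ``in particular'' part, I would reduce to the first assertion by showing that $\mathrm{Fix}_{\mathrm{Cont},I}(F)$ is clopen in $\mathrm{Fix}(F)$. The continuous map $x\mapsto I(x)=(t\mapsto F_t(x))$ from $\mathrm{Fix}(F)$ into the free loop space of $M$ sends nearby fixed points to uniformly close loops, and a sufficiently small $C^0$-perturbation of a loop in a manifold preserves its free homotopy class. Since contractibility is a free-homotopy invariant, this gives both openness (push a contractible $I(x)$ to nearby $I(y)$) and closedness (take a uniform limit along a sequence in $\mathrm{Fix}_{\mathrm{Cont},I}(F)$). Combined with nonemptiness from Matsumoto and the connectedness of $\mathrm{Fix}(F)$, this yields $\mathrm{Fix}_{\mathrm{Cont},I}(F)=\mathrm{Fix}(F)$, after which the first part finishes the argument.

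The deduction itself is essentially formal once the cited results are granted; the genuinely hard content sits inside Theorem \ref{prop:F is not constant if the contractible fixed points is finite} and Lemma \ref{lem:fps is connected and B property} rather than in the present argument. The only point to verify carefully is that the hypotheses of the second case of Theorem \ref{thm:PW} apply, which reduces to the WB-property furnished by Lemma \ref{lem:fps is connected and B property} plus the full-support property built into the definition of a $\mu$-Hamiltonian isotopy.
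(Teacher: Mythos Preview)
Your proposal is correct and follows essentially the same route as the paper's own proof: nonemptiness from Matsumoto's theorem, the WB-property from Lemma \ref{lem:fps is connected and B property}, well-definedness of $L_\mu$ via Theorem \ref{thm:PW}, and then the combination of Proposition \ref{prop:action is constant on the path connected components} with Theorem \ref{prop:F is not constant if the contractible fixed points is finite}. Your treatment of the ``in particular'' clause via the clopen argument for $\mathrm{Fix}_{\mathrm{Cont},I}(F)\subset\mathrm{Fix}(F)$ is exactly what the paper asserts, with a bit more detail supplied.
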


\begin{proof}By Theorem \ref{thm:hamiltonian map has at
least 3 contricible fixed points},
$\mathrm{Fix}_{\mathrm{Cont},I}(F)\neq\emptyset$. Moreover, if the
set $\mathrm{Fix}_{\mathrm{Cont},I}(F)$ is connected, the isotopy
must satisfy the WB-property according to Lemma \ref{lem:fps is
connected and B property}. Therefore, the action function is well
defined by Theorem \ref{thm:PW}. The conclusion follows from
Proposition \ref{prop:action is constant on the path connected
components} and Theorem \ref{prop:F is not constant if the
contractible fixed points is finite}. Note that the connectedness of
$\mathrm{Fix}(F)$ implies that
$\mathrm{Fix}_{\mathrm{Cont},I}(F)=\mathrm{Fix}(F)$ because
$\mathrm{Fix}_{\mathrm{Cont},I}(F)$ is an open and closed subset of
$\mathrm{Fix}(F)$.
\end{proof}

If $F\neq\mathrm{Id}_{M}$, Theorem \ref{thm:connected and Id}
implies that the number of connected components of the set
$\mathrm{Fix}_{\mathrm{Cont},I}(F)$ is at least 2, which is optimal
by the following example.
\begin{exem}\label{ex:connected components of fixed point set of Hamilton homeomorphisms}
Let $\mu$ be the measure induced by the area form $\omega$ and $D$
be a topological closed disk on $M$. Up to a diffeomorphism, we may
suppose that $D$ is the closed unit Euclidean disk and that
$\omega|_{D}=\mathrm{d}x\wedge \mathrm{d}y$. Let us consider the
polar coordinate for $D$ with the center $z_0=(0,0)$. Consider the
following isotopy $(F_t)_{t\in[0,1]}$ on $M$ defined as follows
\begin{eqnarray*}
  F_t: D &\rightarrow& D \\
  (r,\theta)&\mapsto& (r,\theta+2\pi rt),
\end{eqnarray*}
and $F_t|_{M\setminus D}=\mathrm{Id}_{M\setminus D}$ for all
$t\in[0,1]$. Obviously, $\rho_{M,I}(\mu)=0$ and
$\mathrm{Fix}_{\mathrm{Cont},I}(F)$ has exactly two
  connected components: $\{z_0\}$ and $M\setminus
\mathrm{Int}(D)$, where $\mathrm{Int}(D)$ is the interior of $D$.
\end{exem}
 By Theorem \ref{thm:connected and
Id} and Theorem \ref{thm:hamiltonian map has at least 3 contricible
fixed points}, if $\mathrm{Fix}_{\mathrm{Cont},I}(F)$ has exactly
two connected components, its cardinality must be
infinite.\smallskip

Remark that a measure with full support is essential for Theorem
\ref{prop:F is not constant if the contractible fixed points is
finite} and Theorem \ref{thm:connected and Id}. Without such
condition, Theorem \ref{prop:F is not constant if the contractible
fixed points is finite} can not hold as illustrated by Example
\ref{exem:T2 and supp(u)notM} and \ref{exem:M and supp(u)notM}
(Section \ref{Appendix}). In the case where $M=\mathbb{T}^2$,
Example \ref{exem:T2 and supp(u)notM} is also a counter-example of
Theorem \ref{thm:connected and Id} if the measure is without full
support. When the genus of $M$ is more than two, one can choose an
identity isotopy on $M$ with exactly one contractible fixed point
$z$ (such isotopy exists by Lefschetz-Nielsen's formula) and the
Dirac measure $\delta_z$. \smallskip

Denote by $\mathscr{IS}_*(M)$ the group of all identity isotopies
$I=(F_t)_{t\in[0,1]}$ on $M$, where the composition is given by
Equation \ref{the product operate of the symplectic group of
isotopy} (Section \ref{sec:Identity isotopies})\footnote{\,Usually,
there is another definition of the composition as follows: for any
two identity isotopies $I=(F_t)_{t\in[0,1]},I'=(F_t')_{t\in[0,1]}\in
\mathscr{I}\mathscr{S}_*(M)$, we define $I\circ I'=(F_t\circ
F_t')_{t\in[0,1]}$. However, the two new identity isotopies by such
two definitions are homotopic with fixed extremities.}. We say that
two identity isotopies $(F_t)_{t\in[0,1]},(G_t)_{t\in[0,1]}\in
\mathscr{I}\mathscr{S}_*(M)$ are homotopic with fixed extremities if
$F_1=G_1$
  and there exists a continuous map $[0,1]^2\rightarrow\mathrm{Homeo}(M)$, $(t,s)\mapsto
H_{t,s}$ such that $H_{0,s}=\mathrm{Id}_{M}$, $H_{1,s}=F_1=G_1$,
$H_{t,0}=F_t$ and $H_{t,1}=G_t$.

Under the same hypotheses as Theorem \ref{thm:PW}, we define the
 \emph{action spectrum of $I$} (up to an additive
constant):
\begin{equation*}
\sigma(I)=\{L_\mu(z)\mid z\in
\mathrm{Fix}_{\mathrm{Cont},I}(F)\}\subset\mathbb{R},
\end{equation*}
and the following \emph{action width of $I$}:
\begin{equation*}
\mathrm{width}(I)=\sup_{x,y\in\sigma(I)}|x-y|.
\end{equation*}
It turns out that the action spectrum $\sigma(I)$ (and hence
$\mathrm{width}(I)$) is invariant by conjugation in
$\mathrm{Homeo}^+(M,\mu)$, where $\mathrm{Homeo}^+(M,\mu)$ is the
subgroup of $\mathrm{Homeo}(M)$ whose elements preserve the measure
$\mu$ and the orientation (see \cite[Corollary 4.6.14]{W2}).
Moreover, the action function $L_\mu$ only depends on the homotopic
class with fixed endpoints of $I$ (see Proposition
\ref{prop:invariant in homotopic sense} below), so do $\sigma(I)$
and $\mathrm{width}(I)$. Observing that
$\pi_1(\mathrm{Homeo}_*(\Sigma_g))\simeq\{0\}$ ($g>1$), and that
$\pi_1(\mathrm{Homeo}_*(\mathbb{T}^2))\simeq\mathbb{Z}^2$ and
$\rho_{\mathbb{T}^2,I}(\mu)=0$, we can simply write $\sigma(F)$
(resp. $\mathrm{width}(F)$) instead of $\sigma(I)$ (resp.
$\mathrm{width}(I)$).

For any $q\geq1$, we define an identity isotopy $I^q$ on $M$:
$I^{q}(z)=\prod_{k=0}^{q-1}I({F^k(z)})$ for $z\in M$ (see Equation
\ref{the product operate of the symplectic group of isotopy} in
Section \ref{sec:Identity isotopies} for details). Under the
hypotheses of Theorem \ref{thm:PW}, for every two distinct
contractible fixed points $a$ and $b$ of $F$, the following
iteration formula \cite[Corollary 4.7.3]{W2} holds:
$I_{\mu}(I^q;a,b)=qI_{\mu}(I;a,b)$ for all $q\geq1$, where
$I_{\mu}(I;a,b)=L_{\mu}(I;b)-L_{\mu}(I;a)$.\smallskip

We follow the conventions of Polterovich \cite{P}: given two
positive sequences $\{a_n\}$ and $\{b_n\}$, we write $a_n\succeq
b_n$ if there is $c>0$ such that $a_n\geq cb_n$ for all
$n\in\mathbb{N}$, and $a_n\sim b_n$ if $a_n\succeq b_n$ and
$a_n\preceq b_n$. Based on Theorem \ref{prop:F is not constant if
the contractible fixed points is finite} and the iteration formula
above, we have the following conclusion which is a generalization of
Proposition 2.6.\,A in \cite{P}.

\begin{prop}\label{prop:ggeq1 no torsion}Let $F$ be the
time-one map of a $\mu$-Hamiltonian isotopy $I$. If $I$ satisfies
the WB-property and $F\neq\mathrm{Id}_{M}$, then
$\mathrm{width}(F^n)\succeq n.$
\end{prop}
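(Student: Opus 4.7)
The plan is to combine Theorem \ref{prop:F is not constant if the contractible fixed points is finite} with the iteration formula $I_\mu(I^q;a,b) = q\,I_\mu(I;a,b)$ quoted from \cite[Corollary 4.7.3]{W2}, and then push the resulting lower bound from a single pair of contractible fixed points all the way to the width.

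First, I would apply Theorem \ref{prop:F is not constant if the contractible fixed points is finite} to the $\mu$-Hamiltonian isotopy $I$. Since $F \neq \mathrm{Id}_M$ and $I$ satisfies the WB-property, the generalized action function $L_\mu$ of Theorem \ref{thm:PW} is not constant on $\mathrm{Fix}_{\mathrm{Cont},I}(F)$, so there exist distinct points $a,b \in \mathrm{Fix}_{\mathrm{Cont},I}(F)$ with
\[
c := |I_\mu(I;a,b)| = |L_\mu(I;b) - L_\mu(I;a)| > 0 .
\]
Next, for each $n \geq 1$, I would check that $I^n$ falls under the hypotheses of Theorem \ref{thm:PW} at the two points $a$ and $b$. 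The points $a,b$ remain contractible fixed points of $F^n$ relative to $I^n$, since $I^n(a)$ is the $n$-fold concatenation of the contractible loop $I(a)$ (and similarly for $b$). The measure $\mu$ is still $F^n$-invariant with full support, and $\rho_{M,I^n}(\mu) = n\,\rho_{M,I}(\mu) = 0$, so $I^n$ is $\mu$-Hamiltonian. Granted that the WB-property for $I$ passes to $I^n$ (or at least at the lifts of $a$ and $b$, which suffices to define $L_\mu(I^n;a)$ and $L_\mu(I^n;b)$), the iteration formula then yields
\[
|L_\mu(I^n;b) - L_\mu(I^n;a)| = n\,|L_\mu(I;b) - L_\mu(I;a)| = n c ,
\]
so $\mathrm{width}(F^n) \geq n c$, which is exactly $\mathrm{width}(F^n) \succeq n$.

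The main obstacle is the bookkeeping in the second step: confirming that the WB-property is inherited by $I^n$ at the relevant lifted points, so that Theorem \ref{thm:PW} is available for the iterate. The linking numbers relative to $\widetilde{F}^n$ between lifts of $a$ and $b$ should be controlled by those relative to $\widetilde{F}$ together with the deck-transformation action, but one has to be attentive to the genus-one case, where $\widetilde{F}$ (and hence $\widetilde{F}^n$) depends on the chosen isotopy. Once this technical point is settled, the proposition reduces to the one-line linear-growth estimate above.
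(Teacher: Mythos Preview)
Your proposal is correct and follows exactly the paper's approach: the paper states the proposition as an immediate consequence of Theorem \ref{prop:F is not constant if the contractible fixed points is finite} together with the iteration formula $I_\mu(I^q;a,b)=qI_\mu(I;a,b)$ from \cite[Corollary 4.7.3]{W2}, without any further argument. Your bookkeeping worry about the WB-property for $I^n$ is handled by that cited corollary, which is stated in the paper under the hypotheses of Theorem~\ref{thm:PW} for $I$ (not for $I^n$) and already guarantees that $I_\mu(I^q;a,b)$ is well defined; so you need not separately verify the WB-property for the iterate.
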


In fact, we define the action functions not only for the
``Hamiltonian case: $\rho_{M,I}(\mu)=0$'' (the double quotation
marks means that the isotopy $I$ is not truely Hamiltonian if the
measure $\mu$ has no full measure by the definition), but also for
the ``non-Hamiltonian case: $\rho_{M,I}(\mu)\neq0$'' (see Corollary
\ref{clm:I is coboundary} below for details). Recall that
$\widetilde{F}$ is the time-one map of the lifted identity isotopy
of $I$ to $\widetilde{M}$. In the non-Hamiltonian case, we define
the action function $l_\mu$ on the set $\mathrm{Fix}(\widetilde{F})$
(see Corollary \ref{clm:I is coboundary}). When $M$ is hyperbolic,
we have the following theorem which is a generalization of Theorem
2.1.\,C in \cite{P}.

\begin{thm}\label{cor:the symplectic action when M with genus bigger
1}Let $F\in \mathrm{Homeo}_*(M)\setminus\{\,\mathrm{Id}_{M}\}$ be
the time-one map of an identity isotopy $I$ on a closed oriented
surface $M$ with $g>1$. If $F$ is $\mu$-symplectic and $I$ satisfies
the WB-property, then the action function $l_\mu$ is not constant.
\end{thm}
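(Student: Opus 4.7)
The plan is to argue by contradiction: suppose $l_\mu$ is constant on $\mathrm{Fix}(\widetilde F)$, and derive $F=\mathrm{Id}_M$, contradicting the hypothesis $F\neq\mathrm{Id}_M$. Because $g>1$, the triviality of $\pi_1(\mathrm{Homeo}_*(M))$ recalled above forces any two identity isotopies with fixed endpoints to be homotopic; consequently $\widetilde F$, $\mathrm{Fix}(\widetilde F)$, $\rho_{M,I}(\mu)$, and $l_\mu$ depend only on $F$, which lets me split cleanly on whether the rotation vector vanishes.

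In the sub-case $\rho_{M,I}(\mu)=0$, the isotopy $I$ is $\mu$-Hamiltonian since $F$ is $\mu$-symplectic with full-support measure. The WB-property is assumed, so by Matsumoto's theorem (Theorem~\ref{thm:hamiltonian map has at least 3 contricible fixed points}) the set $\mathrm{Fix}_{\mathrm{Cont},I}(F)$ is nonempty and $\pi^{-1}(\mathrm{Fix}_{\mathrm{Cont},I}(F))$ is a $\pi_1(M)$-invariant subset of $\mathrm{Fix}(\widetilde F)$. Theorem~\ref{prop:F is not constant if the contractible fixed points is finite} then gives that $L_\mu$ is non-constant on $\mathrm{Fix}_{\mathrm{Cont},I}(F)$. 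Since $l_\mu$ coincides with $L_\mu\circ\pi$ on the lifts of contractible fixed points up to an additive shift that is constant on each $\pi_1(M)$-orbit, the non-constancy of $L_\mu$ transfers to $l_\mu$, yielding the required contradiction in this sub-case.

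The main obstacle is the sub-case $\rho_{M,I}(\mu)\neq 0$, where Theorem~\ref{prop:F is not constant if the contractible fixed points is finite} does not apply and $\mathrm{Fix}_{\mathrm{Cont},I}(F)$ may even be empty. Here I would use the explicit description of $l_\mu$ supplied by Corollary~\ref{clm:I is coboundary}, which expresses the difference $l_\mu(\widetilde b)-l_\mu(\widetilde a)$ as a $\pi_1(M)$-equivariant integral of the linking-number function $\widetilde z\mapsto i(\widetilde F;\widetilde a,\widetilde z)-i(\widetilde F;\widetilde b,\widetilde z)$ against a lifted copy of $\mu$ on a fundamental domain. The assumption that $l_\mu$ is constant forces this integral to vanish for every pair $\widetilde a,\widetilde b\in\mathrm{Fix}(\widetilde F)$. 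Combining the WB-property (which uniformly controls the integrands and makes the integrals well-defined) with Le~Calvez's equivariant transverse foliation theory~\cite{P1,P3} and Jaulent's refinement~\cite{J} applied to $\widetilde I$ on the hyperbolic plane $\widetilde M$, I would extract either a leaf along which $\widetilde F$ translates nontrivially or a pair of fixed points whose linking-number integral is forced to be nonzero by the nonvanishing rotation vector together with the full support of $\mu$; either alternative contradicts the supposed constancy of $l_\mu$.

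The hard part will be the rigorous execution of this last step: lifting the Le~Calvez--Jaulent transverse foliation equivariantly to $\widetilde M$ in the possible absence of contractible fixed points, and converting the nonzero rotation vector into a concrete witness to non-constancy of $l_\mu$. The WB-property is essential for making the defining integrals converge, but the main geometric content lies in identifying, via the transverse foliation and the full support of $\mu$, the two fixed points of $\widetilde F$ at which the linking-number integrals differ.
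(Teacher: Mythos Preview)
Your treatment of the sub-case $\rho_{M,I}(\mu)=0$ is correct and matches the paper's argument: apply Theorem~\ref{prop:F is not constant if the contractible fixed points is finite} and lift.

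The sub-case $\rho_{M,I}(\mu)\neq 0$, however, is where you go astray. You write that ``$\mathrm{Fix}_{\mathrm{Cont},I}(F)$ may even be empty'' and then propose a heavy machinery of equivariant transverse foliations on $\widetilde M$. But on a closed oriented surface with $g>1$ the Lefschetz number is $2-2g<0$, so by the Lefschetz--Nielsen formula $\mathrm{Fix}_{\mathrm{Cont},I}(F)$ is \emph{never} empty. This is the key observation you are missing, and it makes the second sub-case dramatically simpler than you anticipate.

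The paper's argument in this sub-case is a short direct computation. Choose any $a\in\mathrm{Fix}_{\mathrm{Cont},I}(F)$ and a lift $\widetilde a$, and pick $\alpha\in G$ with $\varphi(\alpha)\wedge\rho_{M,I}(\mu)\neq 0$ (possible since $\rho_{M,I}(\mu)\neq 0$). Using an isotopy homotopic to $I$ that fixes $a$, one identifies the linking number $i(\widetilde F;\widetilde a,\alpha(\widetilde a),z)$ with the intersection of the projected loop $\pi(\widetilde\gamma)$ (for $\widetilde\gamma$ a path from $\widetilde a$ to $\alpha(\widetilde a)$) against the asymptotic cycle of $z$; integrating over $M$ gives
\[
i_\mu(\widetilde F;\widetilde a,\alpha(\widetilde a))=\varphi(\alpha)\wedge\rho_{M,I}(\mu)\neq 0,
\]
so $l_\mu(\alpha(\widetilde a))\neq l_\mu(\widetilde a)$. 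No foliation theory is needed here at all; the nonzero rotation vector itself furnishes the witness pair $(\widetilde a,\alpha(\widetilde a))$.

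A smaller point: your description of $i_\mu$ as an integral of $\widetilde z\mapsto i(\widetilde F;\widetilde a,\widetilde z)-i(\widetilde F;\widetilde b,\widetilde z)$ over a fundamental domain is not how the paper defines it; $i_\mu(\widetilde F;\widetilde a,\widetilde b)$ is the integral over $M$ of the linking number $i(\widetilde F;\widetilde a,\widetilde b,z)$ with respect to recurrent points $z$ (Equation~\eqref{eq:imu}). This matters for the computation above, since it is precisely what connects $i_\mu$ to the rotation vector.
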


Similar to $\sigma(I)$ and $\mathrm{width}(I)$, we can define the
following \emph{lifted action spectrum of $I$} (up to an additive
constant):
\begin{equation*}
\widetilde{\sigma}(I)=\{l_\mu(z)\mid z\in
\mathrm{Fix}(\widetilde{F})\}\subset\mathbb{R},
\end{equation*}
and \emph{lifted action width of $I$}:
\begin{equation*}
\widetilde{\mathrm{width}}(I)=\sup_{x,y\in\widetilde{\sigma}(I)}|x-y|.
\end{equation*}
As before, $\widetilde{\sigma}(I)$ and
$\widetilde{\mathrm{width}}(I)$ are invariant by conjugation in
$\mathrm{Homeo}^+(M,\mu)$ and merely depend on the homotopic class
with fixed endpoints of $I$. When $M=\Sigma_g$ ($g>1$), we can
simply write $\widetilde{\sigma}(F)$ (resp.
$\widetilde{\mathrm{width}}(F)$) instead of $\widetilde{\sigma}(I)$
(resp. $\widetilde{\mathrm{width}}(I)$) since
$\pi_1(\mathrm{Homeo}_*(\Sigma_g))$ is trivial.\smallskip

For each two distinct fixed points $\widetilde{a}$ and
$\widetilde{b}$ of $\widetilde{F}$, a similar iteration formula
holds (see \cite[Corollary 4.6.12]{W2}):
$i_{\mu}(I^q;\widetilde{a},\widetilde{b})=qi_{\mu}(I;\widetilde{a},\widetilde{b})$
for all $q\geq1$, where
$i_{\mu}(I;\widetilde{a},\widetilde{b})=l_{\mu}(I;\widetilde{b})-l_{\mu}(I;\widetilde{a})$.
By Theorem \ref{cor:the symplectic action when M with genus bigger
1} and this iteration formula, we also have

\begin{prop}\label{prop:g>1 no torsion}
Let $F\in \mathrm{Homeo}_*(M)\setminus\{\,\mathrm{Id}_{M}\}$ be the
time-one map of an identity isotopy $I$ on a closed oriented surface
$M$ with $g>1$. If $F$ is $\mu$-symplectic and $I$ satisfies the
WB-property, then $\widetilde{\mathrm{width}}(F^n)\succeq n.$
\end{prop}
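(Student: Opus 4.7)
The plan is to mirror the proof of Proposition \ref{prop:ggeq1 no torsion}, replacing the action function $L_\mu$ and Theorem \ref{prop:F is not constant if the contractible fixed points is finite} by the lifted action function $l_\mu$ and Theorem \ref{cor:the symplectic action when M with genus bigger 1}. Since $F \neq \mathrm{Id}_M$ is $\mu$-symplectic and $I$ satisfies the WB-property, Theorem \ref{cor:the symplectic action when M with genus bigger 1} yields two distinct fixed points $\widetilde{a}, \widetilde{b} \in \mathrm{Fix}(\widetilde{F})$ with $c := |i_\mu(I; \widetilde{a}, \widetilde{b})| > 0$.

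For each $n \geq 1$, the points $\widetilde{a}$ and $\widetilde{b}$ remain fixed by $\widetilde{F}^n$, which is the time-one map of the lifted isotopy of $I^n$. I would next check that the setup of Theorem \ref{thm:PW} (in the form used in Theorem \ref{cor:the symplectic action when M with genus bigger 1}) transfers from $I$ to $I^n$: the iterate $F^n$ still preserves $\mu$ with full support, and the WB-property at $\widetilde{a}$, $\widetilde{b}$ should persist under iteration via the composition rule for linking numbers along the concatenated isotopy $I^n = \prod_{k=0}^{n-1} I(F^k(\cdot))$. This ensures that $l_\mu(I^n; \cdot)$ is well defined at $\widetilde{a}$ and $\widetilde{b}$, and that $\widetilde{a}, \widetilde{b} \in \mathrm{Fix}(\widetilde{F}^n)$ contribute to the lifted spectrum $\widetilde{\sigma}(I^n)$.

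Once that is in place, the iteration formula $i_\mu(I^n; \widetilde{a}, \widetilde{b}) = n\, i_\mu(I; \widetilde{a}, \widetilde{b})$ from \cite[Corollary 4.6.12]{W2} gives
\[
\widetilde{\mathrm{width}}(F^n) \;\geq\; |l_\mu(I^n; \widetilde{b}) - l_\mu(I^n; \widetilde{a})| \;=\; n\, c,
\]
where we use that $g>1$ makes $\pi_1(\mathrm{Homeo}_*(M))$ trivial, so that $\widetilde{\mathrm{width}}(F^n) = \widetilde{\mathrm{width}}(I^n)$ is unambiguous. Since $c > 0$ is independent of $n$, we conclude $\widetilde{\mathrm{width}}(F^n) \succeq n$. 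The main obstacle is the verification of the WB-property for $I^n$ at the pair $(\widetilde{a}, \widetilde{b})$, so that the action values on the right-hand side above are meaningful; once this is secured, the linear lower bound drops out immediately from the iteration formula.
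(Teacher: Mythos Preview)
Your proposal is correct and follows essentially the same approach as the paper: apply Theorem~\ref{cor:the symplectic action when M with genus bigger 1} to obtain $\widetilde a,\widetilde b\in\mathrm{Fix}(\widetilde F)$ with $i_\mu(I;\widetilde a,\widetilde b)\neq 0$, then invoke the iteration formula $i_\mu(I^n;\widetilde a,\widetilde b)=n\,i_\mu(I;\widetilde a,\widetilde b)$ from \cite[Corollary~4.6.12]{W2} to get $\widetilde{\mathrm{width}}(F^n)\geq n\,|i_\mu(I;\widetilde a,\widetilde b)|$. The paper does not pause on the WB-property for $I^n$ either; that well-definedness is absorbed into the cited iteration formula, so your caveat is harmless but unnecessary.
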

\smallskip

We fix a Borel finite measure $\mu$ which has a full support and has
no atoms on $M$ (e.g., the measure $\mu$ induced by the area form
$\omega$). Obviously, the sets $\mathrm{Homeo}(M)$ and
$\mathrm{Homeo}_*(M)$ form groups (the operation is the composition
of the maps). Denote by $\mathrm{Homeo}_*(M,\mu)$ the subgroup of
$\mathrm{Homeo}_*(M)$ whose elements preserve the measure $\mu$.
Denote by $\mathrm{Hameo}(M,\mu)$ the subset of
$\mathrm{Homeo}_*(M,\mu)$ whose elements are $\mu$-Hamiltonian. It
has been proved that $\mathrm{Hameo}(M,\mu)$ forms a group
\cite{F1}.


Denote by $\mathscr{I}\mathscr{S}_*(M,\mu)$ the subgroup of
$\mathscr{I}\mathscr{S}_*(M)$ whose element
$(F_t)_{t\in[0,1]}\in\mathscr{I}\mathscr{S}_*(M)$ satisfies
$(F_1)_*\mu=\mu$. The homotopic relation is an equivalence relation
on $\mathscr{I}\mathscr{S}_*(M)$ (resp.
$\mathscr{I}\mathscr{S}_*(M,\mu)$). Denote the set of equivalence
classes by $\mathscr{H}_*(M)$ (resp. $\mathscr{H}_*(M,\mu)$). It
turns out that $\mathscr{H}_*(M)$ and $\mathscr{H}_*(M,\mu)$ are
groups. Indeed, $\mathscr{H}_*(M)$ (resp. $\mathscr{H}_*(M,\mu)$) is
the universal covering space of $\mathrm{Homeo}_*(M)$ (resp.
$\mathrm{Homeo}_*(M,\mu)$) \cite[Section 5]{Fathi}.

Given $I\in\mathscr{H}_*(M,\mu)$, if the isotopy $I$ does not
satisfy the WB-property (Definition \ref{def:wb property and b
property}), then there must exist three fixed points
$\widetilde{a}$,
 $\widetilde{b}$ and $\widetilde{c}$ of $\widetilde{F}$ such that
$i(\widetilde{F};\widetilde{a},\widetilde{c})-i(\widetilde{F};\widetilde{b},\widetilde{c})\neq
0$ which is independent of the choice of the isotopy $I$ from
$\mathrm{Id}_M$ to $F$ (see \cite[Section 4.2.4.2]{W2}). By Equation
\ref{eq:linking number for two fixed points}, we have
$i(\widetilde{F}^n;\widetilde{a},\widetilde{b})=n\cdot
i(\widetilde{F};\widetilde{a},\widetilde{b})$ for every
$n\in\mathbb{N}$, where
$i(\widetilde{F}^n;\widetilde{z},\widetilde{z}\,')=i_{\widetilde{I}^n}
(\widetilde{z},\widetilde{z}\,')$ and $\widetilde{I}^n$ is the
lifted identity isotopy of $I^n$ to $\widetilde{M}$. Hence we obtain
that
$|i(\widetilde{F}^n;\widetilde{a},\widetilde{c})-i(\widetilde{F}^n;\widetilde{b},\widetilde{c})|\succeq
n$. Note that the value of $\rho_{M,I}(\mu)$ only depend on the
homotopic class with fixed endpoints of $I$. Therefore, if
$\rho_{M,I}(\mu)\neq0$, by the morphism property of
$\rho_{M,\cdot}(\mu): \mathscr{H}_*(M,\mu)\rightarrow
H_1(M,\mathbb{R})$:
$\rho_{M,II'}(\mu)=\rho_{M,I}(\mu)+\rho_{M,I'}(\mu)$ (see \cite{F1}
for details), we have
$\|\rho_{M,I^n}(\mu)\|_{H_1(M,\mathbb{R})}\succeq n$, where
$\|\cdot\|_{H_1(M,\mathbb{R})}$ is a norm on the space
$H_1(M,\mathbb{R})$.

Applying Proposition \ref{prop:ggeq1 no torsion}, Proposition
\ref{prop:g>1 no torsion}, and the arguments above, we immediately
obtain the following result:
\begin{cor}\label{cor:no torsion}
The groups $\mathrm{Hameo}(\mathbb{T}^2,\mu)$ and
$\mathrm{Homeo}_*(\Sigma_g,\mu)$ ($g>1$) are torsion free.
\end{cor}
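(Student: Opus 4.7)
The plan is to argue by contradiction from the linear growth statements recorded in Propositions \ref{prop:ggeq1 no torsion} and \ref{prop:g>1 no torsion}, combined with the linking-number and rotation-vector estimates gathered just before the corollary. Suppose $F$ lies in one of the two groups and satisfies $F^{n}=\mathrm{Id}_{M}$ for some $n\geq 1$, while $F\neq\mathrm{Id}_{M}$. For $F\in\mathrm{Hameo}(\mathbb{T}^{2},\mu)$ fix a $\mu$-Hamiltonian identity isotopy $I$ from $\mathrm{Id}_{M}$ to $F$ (which exists by the definition of $\mathrm{Hameo}$ and comes with $\rho_{\mathbb{T}^{2},I}(\mu)=0$). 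For $F\in\mathrm{Homeo}_{*}(\Sigma_{g},\mu)$ take any identity isotopy $I$ in $\mathrm{Homeo}_{*}(\Sigma_{g})$ joining $\mathrm{Id}_{M}$ to $F$; its homotopy class rel endpoints is canonical thanks to $\pi_{1}(\mathrm{Homeo}_{*}(\Sigma_{g}))\simeq\{0\}$.

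The key preliminary step is to show that the iterate $I^{n}$ is homotopic rel endpoints to the constant isotopy at $\mathrm{Id}_{M}$. In either case $I^{n}$ is a loop at $\mathrm{Id}_{M}$. For $\Sigma_{g}$ with $g>1$ the triviality of $\pi_{1}(\mathrm{Homeo}_{*}(\Sigma_{g}))$ makes this immediate. For $\mathbb{T}^{2}$ the morphism property of $\rho$ gives $\rho_{\mathbb{T}^{2},I^{n}}(\mu)=n\rho_{\mathbb{T}^{2},I}(\mu)=0$, and the rotation-vector map $\pi_{1}(\mathrm{Homeo}_{*}(\mathbb{T}^{2},\mu))\simeq\mathbb{Z}^{2}\hookrightarrow H_{1}(\mathbb{T}^{2},\mathbb{R})$ then forces $I^{n}$ to be null-homotopic. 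Since all of the invariants relevant to us depend only on the homotopy class of the isotopy rel endpoints, one concludes that $\mathrm{width}(I^{n})=0$, $\widetilde{\mathrm{width}}(I^{n})=0$, every linking number $i(\widetilde{F}^{n};\widetilde{a},\widetilde{b})$ vanishes, and $\rho_{M,I^{n}}(\mu)=0$.

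Now I split into the same two cases used in the paragraph preceding the corollary. If $I$ satisfies the WB-property, Proposition \ref{prop:ggeq1 no torsion} (on $\mathbb{T}^{2}$) or Proposition \ref{prop:g>1 no torsion} (on $\Sigma_{g}$) gives $\mathrm{width}(F^{n})\succeq n$, respectively $\widetilde{\mathrm{width}}(F^{n})\succeq n$. If $I$ does not satisfy the WB-property, the preceding discussion produces fixed points $\widetilde{a},\widetilde{b},\widetilde{c}$ of $\widetilde{F}$ with
\[|i(\widetilde{F}^{n};\widetilde{a},\widetilde{c})-i(\widetilde{F}^{n};\widetilde{b},\widetilde{c})|\succeq n.\]
Each of these contradicts the vanishing obtained above once $n$ is large enough, so the assumption $F\neq\mathrm{Id}_{M}$ must fail, and the corollary follows.

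The main obstacle I anticipate is the careful bookkeeping of lifts and homotopy classes: one must verify that the homotopy $I^{n}\simeq\mathrm{const}$ actually lifts to a homotopy from $\widetilde{I}^{n}$ to the constant isotopy at $\mathrm{Id}_{\widetilde{M}}$, so that $\widetilde{F}^{n}=\mathrm{Id}_{\widetilde{M}}$ and the relevant widths and linking numbers genuinely vanish (and are not merely shifted by a nontrivial deck transformation). This is delicate on $\mathbb{T}^{2}$, where different isotopies with the same time-one map can produce distinct lifts; the Hamiltonian hypothesis, through $\rho=0$, is precisely what removes this ambiguity. On $\Sigma_{g}$ with $g>1$ it is automatic from the triviality of the fundamental group.
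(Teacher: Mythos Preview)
Your argument is correct and follows essentially the same route as the paper, which simply states that the corollary follows immediately from Propositions~\ref{prop:ggeq1 no torsion} and~\ref{prop:g>1 no torsion} together with the linking-number and rotation-vector growth estimates in the paragraph preceding the corollary. You have made explicit the one step the paper leaves implicit, namely that $I^{n}$ is null-homotopic rel endpoints (trivially for $\Sigma_g$, and via injectivity of $\rho$ on $\pi_1(\mathrm{Homeo}_*(\mathbb{T}^2))$ for the torus), so that all the relevant invariants of $F^{n}$ vanish. One minor remark: your phrase ``once $n$ is large enough'' is slightly misleading since $n$ is the fixed torsion order; the contradiction already occurs at that single value because $\succeq$ gives a positive lower bound for every exponent, but this is only a wording issue and not a gap.
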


Note that it is easy to find a homeomorphism $F\in
\mathrm{Homeo}_*(\mathbb{T}^2,\mu)\setminus
\mathrm{Hameo}(\mathbb{T}^2,\mu)$ such that
$F^n=\mathrm{Id}_{\mathbb{T}^2}$ for $n>1$, such as any rigidity
rotation on $\mathbb{T}^2$ with rotation $\alpha\in
\mathbb{Q}^2\setminus\mathbb{Z}^2$. 
\bigskip

Let us finish this section by introduce the Zimmer's conjecture.
Denote by $\mathrm{Ham}^1(M,\mu)$ the group
$\mathrm{Hameo}(M,\mu)\cap \mathrm{Diff}^1(M)$ and by
$\mathrm{Diff}^1_*(M,\mu)$ the group $\mathrm{Homeo}_*(M,\mu)\cap
\mathrm{Diff}^1(M)$. Let us now recall the definition of distortion
(see \cite{P}). If $\mathscr{G}$ is a finitely generated group with
generators $\{g_1,\ldots,g_s\}$,
$f\in\mathscr{G}$ is a \emph{distortion element} 
of $\mathscr{G}$ provided that $f$ has infinite order and
$$\liminf_{n\rightarrow+\infty}\frac{\|f^n\|_{\mathscr{G}}}{n}=0,$$
where $\|f^n\|_{\mathscr{G}}$ is the word length of $f^n$ in the
generators $\{g_1,\ldots,g_s\}$. If $\mathscr{G}$ is not finitely
generated, then we say that $f\in\mathscr{G}$ is distorted in
$\mathscr{G}$ if it is distorted in some finitely generated subgroup
of $\mathscr{G}$.
\begin{thm}\label{thm:Fn thicksim n}
Assume that
$F\in\mathrm{Diff}^1_*(\Sigma_g,\mu)\setminus\{\mathrm{Id}_{\Sigma_g}\}$
($g>1$) (resp.
$F\in\mathrm{Ham}^1(\mathbb{T}^2,\mu)\setminus\{\mathrm{Id}_{\mathbb{T}^2}\}$),
and $\mathscr{G}\subset\mathrm{Diff}^1_*(\Sigma_g,\mu)$ ($g>1$)
(resp. $\mathscr{G}\subset\mathrm{Ham}^1(\mathbb{T}^2,\mu)$) is a
finitely generated subgroup containing $F$, then
$$\|F^n\|_{\mathscr{G}}\thicksim n.$$
As a consequence, the groups $\mathrm{Diff}^1_*(\Sigma_g,\mu)$
($g>1$) and $\mathrm{Ham}^1(\mathbb{T}^2,\mu)$ have no
distortion.\end{thm}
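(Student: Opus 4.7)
The plan is to follow Polterovich's scheme in \cite{P}: combine the linear lower bound on the (lifted) action width of $F^n$ from Propositions~\ref{prop:ggeq1 no torsion} and \ref{prop:g>1 no torsion} with a subadditivity property of the width under group composition, to force word length to grow at least linearly in $n$.

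\emph{Step 1 (linear growth under iteration).} Since $F \in \mathrm{Diff}^1(M)$, the text after Definition~\ref{def:wb property and b property} states that the associated isotopy $I$ satisfies the B-property, and in particular the WB-property. On $\mathbb{T}^2$ one has $F \in \mathrm{Ham}^1(\mathbb{T}^2,\mu)$, so $\rho_{\mathbb{T}^2,I}(\mu)=0$ and Proposition~\ref{prop:ggeq1 no torsion} gives $\mathrm{width}(F^n) \succeq n$. On $\Sigma_g$ with $g>1$, Proposition~\ref{prop:g>1 no torsion} gives $\widetilde{\mathrm{width}}(F^n) \succeq n$ directly from $\mu$-symplecticity, without requiring a vanishing rotation vector.

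\emph{Step 2 (subadditivity of width under composition).} The analytic heart of the proof is to establish, for any $\phi,\psi$ in the ambient group,
\[
\mathrm{width}(\phi\psi) \leq \mathrm{width}(\phi) + \mathrm{width}(\psi)
\]
on $\mathbb{T}^2$, and the analogous inequality $\widetilde{\mathrm{width}}(\phi\psi) \leq \widetilde{\mathrm{width}}(\phi)+\widetilde{\mathrm{width}}(\psi)$ on $\Sigma_g$. I would derive this from a Minkowski-sum inclusion of action spectra, which in turn follows from a composition identity
\[
L_\mu(I_\phi I_\psi;z) \;=\; L_\mu(I_\phi;z')+L_\mu(I_\psi;z)+\text{const},
\]
relating each contractible fixed point $z$ of $\phi\psi$ to a suitably paired fixed point $z'$ of the factors on the orbit of $z$ under the composite isotopy. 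In the classical smooth Hamiltonian setting this is the identity for $\mathcal A_{H\# K}$; in the present setting one verifies it from the cocycle definition of the generalized action function in \cite{W2} together with Corollary~\ref{clm:I is coboundary}, using the iteration identity $i_\mu(I^q;\widetilde a,\widetilde b)=q\, i_\mu(I;\widetilde a,\widetilde b)$ and the conjugation invariance of width in $\mathrm{Homeo}^+(M,\mu)$ noted after Proposition~\ref{prop:invariant in homotopic sense}.

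\emph{Step 3 (conclusion).} Fix generators $g_1,\ldots,g_s$ of $\mathscr G$ and set $W:=\max_i \widetilde{\mathrm{width}}(g_i)$ (or $\max_i \mathrm{width}(g_i)$ on $\mathbb T^2$), which is finite. By Step~2 applied inductively, every $g\in\mathscr G$ of word length $m$ satisfies $\widetilde{\mathrm{width}}(g)\leq mW$. Applying this to $g=F^n$ and combining with Step~1 gives constants $c>0$ with
\[
c\,n \;\leq\; \widetilde{\mathrm{width}}(F^n) \;\leq\; \|F^n\|_{\mathscr G}\cdot W,
\]
so $\|F^n\|_{\mathscr G}\succeq n$. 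The opposite bound $\|F^n\|_{\mathscr G}\leq n\|F\|_{\mathscr G}$ is trivial, yielding $\|F^n\|_{\mathscr G}\sim n$. Since $\mathscr G$ containing $F$ was arbitrary, $F$ is not distorted, and the groups $\mathrm{Diff}^1_*(\Sigma_g,\mu)$ ($g>1$) and $\mathrm{Ham}^1(\mathbb T^2,\mu)$ are free of distortion.

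The hard part is clearly Step~2. In Polterovich's smooth Hamiltonian setting the spectrum inclusion comes from a direct manipulation of generating Hamiltonians, but here $F$ is only assumed $C^1$, and on $\Sigma_g$ is only area-preserving, not Hamiltonian. One must therefore work entirely within the generalized action-function framework, tracking how lifted fixed points of $\widetilde{\phi\psi}$ correspond to pairs of orbits of $\widetilde\phi$ and $\widetilde\psi$ (neither factor fixes them in general), and controlling the resulting action differences via linking numbers. Conjugation invariance together with the iteration identity from \cite{W2} are the right algebraic tools, but matching them to the group composition law requires care.
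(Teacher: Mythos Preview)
Your Step~2 is where the argument breaks. The subadditivity inequality
\[
\widetilde{\mathrm{width}}(\phi\psi)\leq \widetilde{\mathrm{width}}(\phi)+\widetilde{\mathrm{width}}(\psi)
\]
is never established in the paper, and your sketch does not prove it. The problem is structural: the generalized action $L_\mu$ (resp.\ $l_\mu$) is defined \emph{only} on $\mathrm{Fix}_{\mathrm{Cont},I}(F)$ (resp.\ $\mathrm{Fix}(\widetilde F)$). A contractible fixed point $z$ of $\phi\psi$ is in general not a fixed point of either $\phi$ or $\psi$, so there is no ``suitably paired fixed point $z'$ of the factors'' at which to evaluate $L_\mu(I_\phi;\cdot)$ or $L_\mu(I_\psi;\cdot)$. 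The composition identity you write down therefore has no meaning in this framework. In Polterovich's smooth setting the analogous bound goes through because the classical action functional $\mathcal A_H$ is defined via a Hamiltonian integral and a capping disk, which makes sense along an arbitrary closed orbit; the generalized action of \cite{W2} has no such pointwise formula. The iteration identity $i_\mu(I^q;\widetilde a,\widetilde b)=q\,i_\mu(I;\widetilde a,\widetilde b)$ and conjugation invariance that you cite control iterates and conjugates of a \emph{single} map; they say nothing about products of unrelated elements $g_{i_1}^{\epsilon_1}\cdots g_{i_{N}}^{\epsilon_N}$.

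The paper's route is entirely different. It splits the $\Sigma_g$ case ($g>1$) according to whether $\rho_{M,I}(\mu)=0$. If not (Lemma~\ref{lem:zimmer1}), the morphism property of the rotation vector immediately gives $n\|\rho_{M,I}(\mu)\|\leq \kappa\,\|F^n\|_{\mathscr G}$ with $\kappa=\max_i\|\rho_{M,I_i}(\mu)\|$. If $F$ is $\mu$-Hamiltonian (Lemma~\ref{lem:zimmer2}, which also covers $\mathbb T^2$), the paper does \emph{not} compare spectra: it fixes a single positively recurrent point $z_*\notin X$ with $i(\widetilde F;\widetilde a,\widetilde b,z_*)\neq 0$ (produced in the proof of Theorem~\ref{prop:F is not constant if the contractible fixed points is finite}) and bounds the intersection number $L_m(\widetilde F^n;\widetilde a,\widetilde b,z_*)$ directly in terms of $N(n)=\|F^n\|_{\mathscr G}$. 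Writing $I^n$ as a word of length $N(n)$ in the generating isotopies, one counts how many lifts of $z_*$ can meet the immersed squares $A_k$ along the word; the point of the $C^1$ hypothesis is that the angular variation $\lambda_j$ extends continuously across the diagonal, giving the constant $C_1$ in \eqref{eq:C_1}. The resulting estimate $|L_m(\widetilde F^n;\widetilde a,\widetilde b,z_*)|\leq c_0' N(n)\,\tau_m(n,z_*)$ yields $|i(\widetilde F;\widetilde a,\widetilde b,z_*)|\leq c_0' N(n)/n$, hence $N(n)\succeq n$. This is a pointwise linking-number bound, not a spectrum inclusion, and it genuinely uses the decomposition of the isotopy into generator pieces rather than any subadditivity of width.
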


\begin{thm}\label{thm:zimmer2}Every homomorphism from
$\mathrm{SL}(n,\mathbb{Z})$ ($n\geq3$) to
$\mathrm{Ham}^1(\mathbb{T}^2,\mu)$ or
$\mathrm{Diff}^1_*(\Sigma_g,\mu)$ ($g>1$) is trivial. As a
consequence, every  homomorphism from $\mathrm{SL}(n,\mathbb{Z})$
($n\geq3$) to $\mathrm{Diff}^1(\Sigma_g,\mu)$ ($g>1$) has only
finite images.
\end{thm}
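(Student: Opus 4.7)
The plan is to combine the no-distortion bound in Theorem~\ref{thm:Fn thicksim n} with the classical fact that, for $n\geq 3$, every elementary matrix is a distortion element in $\mathrm{SL}(n,\mathbb{Z})$; torsion-freeness (Corollary~\ref{cor:no torsion}) is then invoked to conclude. The elementary matrices $E_{ij}=\mathrm{Id}+e_{ij}$ with $i\neq j$ generate $\mathrm{SL}(n,\mathbb{Z})$, and the Steinberg commutator relation
$$[E_{ij}(a),\,E_{jk}(b)]=E_{ik}(ab)\qquad(i,j,k\text{ pairwise distinct}),$$
which requires three indices and hence $n\geq 3$, powers a standard telescoping argument (Lubotzky-Mozes-Raghunathan) giving
$$\|E_{ij}^{m}\|_{\mathrm{SL}(n,\mathbb{Z})}=O(\log m)\qquad(m\to\infty)$$
in the word metric on $\{E_{ij}\}_{i\neq j}$. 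In particular each $E_{ij}$ has infinite order and is a distortion element.

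Next I fix a homomorphism $\phi:\mathrm{SL}(n,\mathbb{Z})\to G$ with $G\in\{\mathrm{Ham}^1(\mathbb{T}^2,\mu),\,\mathrm{Diff}^1_*(\Sigma_g,\mu)\}$, and set $\mathscr{G}:=\phi(\mathrm{SL}(n,\mathbb{Z}))\subset G$, a finitely generated subgroup whose natural generating set is $\phi(\{E_{ij}\}_{i\neq j})$. A homomorphism cannot increase word length with respect to images of generators, so
$$\|\phi(E_{ij})^{m}\|_{\mathscr{G}}\leq\|E_{ij}^{m}\|_{\mathrm{SL}(n,\mathbb{Z})}=O(\log m).$$
If $\phi(E_{ij})\neq\mathrm{Id}_M$, Theorem~\ref{thm:Fn thicksim n} would force $\|\phi(E_{ij})^{m}\|_{\mathscr{G}}\sim m$, contradicting the logarithmic bound. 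Hence $\phi(E_{ij})=\mathrm{Id}_M$ for every $i\neq j$; since the $E_{ij}$ generate $\mathrm{SL}(n,\mathbb{Z})$, the homomorphism $\phi$ is trivial, which proves the first assertion.

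For the consequence on $\mathrm{Diff}^1(\Sigma_g,\mu)$, let $\psi:\mathrm{SL}(n,\mathbb{Z})\to\mathrm{Diff}^1(\Sigma_g,\mu)$ be any homomorphism. The quotient $\mathrm{Diff}^1(\Sigma_g,\mu)/\mathrm{Diff}^1_*(\Sigma_g,\mu)$ embeds into the mapping class group $\mathrm{MCG}(\Sigma_g)$, and by the Farb-Masur rigidity theorem every homomorphism from $\mathrm{SL}(n,\mathbb{Z})$ ($n\geq 3$) to $\mathrm{MCG}(\Sigma_g)$ has finite image. Consequently $K:=\psi^{-1}(\mathrm{Diff}^1_*(\Sigma_g,\mu))$ has finite index $N$ in $\mathrm{SL}(n,\mathbb{Z})$. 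Because $K$ is itself a higher-rank arithmetic lattice, the LMR logarithmic estimate persists: $\|E_{ij}^{Nm}\|_{K}=O(\log m)$. Applying the previous paragraph to $\psi|_{K}$ gives $\psi(E_{ij}^{N})=\mathrm{Id}_{\Sigma_g}$ for every $i\neq j$, and since $\{E_{ij}^{N}\}_{i\neq j}$ generates a finite-index subgroup of $\mathrm{SL}(n,\mathbb{Z})$, the image $\mathrm{Im}(\psi)$ is finite.

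The main obstacles will be the two external inputs in the last paragraph: the Farb-Masur rigidity of $\mathrm{MCG}(\Sigma_g)$ with respect to higher-rank lattices, and the persistence of logarithmic word-length estimates for unipotents under passage to finite-index subgroups (which is part of the LMR machinery). Both are standard but lie outside the surface-dynamics framework of this paper; within that framework, the entire non-trivial content of the proof is packaged in Theorem~\ref{thm:Fn thicksim n}, and the derivation of the Zimmer-type statement is then purely algebraic.
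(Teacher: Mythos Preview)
Your proof is correct, but it follows a different route from the paper's for the first assertion. The paper argues as follows: it uses only the \emph{existence} of some distortion element in $\mathrm{SL}(n,\mathbb{Z})$ (via the embedded integer Heisenberg group, citing Polterovich), applies Theorem~\ref{thm:zimmer1} to conclude that this element lies in $\mathrm{Ker}(\phi)$, then invokes the Margulis finiteness theorem (almost simplicity of $\mathrm{SL}(n,\mathbb{Z})$) to force $\mathrm{Ker}(\phi)$ to have finite index, and finally uses torsion-freeness (Corollary~\ref{cor:no torsion}) to kill the finite quotient. Your argument instead exploits the stronger fact that a \emph{generating set} (the elementary matrices) consists of distortion elements, so Theorem~\ref{thm:Fn thicksim n} kills all generators at once and triviality is immediate. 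Your route avoids both Margulis finiteness and the torsion-freeness corollary, at the cost of needing the Steinberg/LMR logarithmic bound specifically for the $E_{ij}$ rather than just one unipotent; this is a cleaner and more elementary reduction. (Note that your opening sentence mentions invoking Corollary~\ref{cor:no torsion}, but your actual argument never uses it---you may want to drop that reference.)

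For the consequence on $\mathrm{Diff}^1(\Sigma_g,\mu)$, both the paper and you use Farb--Masur rigidity to pass to a finite-index subgroup mapping into $\mathrm{Diff}^1_*(\Sigma_g,\mu)$. The paper is terse here; your write-up is more explicit about needing the LMR estimate to persist for the finite-index subgroup $K$, which is correct but worth flagging as an external input (as you do).
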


Theorem \ref{thm:zimmer2} is a more general conjecture of Zimmer
\cite{Zim} in the special surfaces case. Remark that Polterovich
\cite{P} showed us a Hamiltonian version of this theorem by using
symplectic filling function, and that Franks and Handel \cite{F4}
obtained this theorem  by the Thurston theory of normal forms for
surface homeomorphisms. Our strategy of proof is similar to the
proof of Polterovich \cite{P}. Hence our proof is totally different
from Franks and Handel's. However, the technology of our proof is
different from Polterovich's so that we can generalize his results
to the group $\mathrm{Ham}^1(\mathbb{T}^2,\mu)$ and the group
$\mathrm{Diff}^1(\Sigma_g,\mu)$ ($g>1$), where $\mu$ is a usual
Borel finite measure with full support. We note that the group
$\mathrm{Ham}^1(\mathbb{T}^2,\mu)$ is defined on the homology level
(comparing to the definition of the classical Hamiltonian
diffeomorphism which is defined on the co-homology level). The
reader can find more information about Zimmer's conjecture in
Section \ref{sec:distortion of group}.

\smallskip

The article is organized as follows. In Section 2, we first
introduce some notations, recall some results about identity
isotopies, and study the WB-property on a connected subset of
$\mathrm{Fix}(\widetilde{F})$. In Section 3, we explain the approach
to defining the generalized action function and study the continuity
of this action function. Our main results Proposition
\ref{prop:action is constant on the path connected components},
Theorem \ref{prop:F is not constant if the contractible fixed points
is finite} and Theorem \ref{cor:the symplectic action when M with
genus bigger 1} will be proved in Section 4, Section 5 and Section
6, respectively. In Section 7, we will provide an alternative proof
of the $C^1$-version of the Zimmer's conjecture on surfaces when the
measure is a Borel finite measure with full support from our method.
In Appendix, we provide the proofs of the lemmas which are not given
in the main sections and also construct Example \ref{exem:T2 and
supp(u)notM} and Example \ref{exem:M and supp(u)notM} to complete
Theorem \ref{prop:F is not constant if the contractible fixed points
is finite} and Theorem \ref{thm:connected and Id}.\bigskip

\noindent\textbf{Acknowledgements.} I would like to thank Patrice Le
Calvez for many helpful discussions and suggestions. I also thank
Fr\'{e}d\'{e}ric Le Roux and Olivier Jaulent for explaining their
results to me. I am grateful to Yiming Long, Matthias Schwarz,
Lucien Guillou and Yinshan Chang for reading the manuscript and many
useful remarks. I would like to express my deep sorrow over the
passing away of Lucien Guillou in 2015.

\section{Notations}

We denote by $|\cdot|$ the usual Euclidean metric on $\mathbb{R}^k$
or $\mathbb{C}^k$ and by $\mathbf{S}^{k-1}=\{x\in\mathbb{R}^k\mid
|x|=1\}$ the unit sphere.

If $A$ is a set, we write $\sharp A$ for the cardinality of $A$. 
If $(S, \sigma,\mu)$ is a measure space and $V$ is any finite
dimensional linear space, we denote by $L^1(S,V,\mu)$ the set of
$\mu$-integrable functions from $S$ to $V$. If $X$ is a topological
space and $A$ is a subset of $X$, we denote by $\mathrm{Int}_X(A)$
and $\mathrm{Cl}_X(A)$ respectively the interior and the closure of
$A$. We will omit the subscript $X$ if no confusion arises. If $M$
is a manifold and $N$ is a submanifold of $M$, we denote by
$\partial N$ the boundary of $N$ on $M$.

\subsection{Identity isotopies}\label{sec:Identity isotopies}An
\emph{identity isotopy} $I=(F_t)_{t\in[0,1]}$ on $M$ is a continuous
path
\begin{eqnarray*}
  [0,1] &\rightarrow& \mathrm{Homeo}(M) \\
  t&\mapsto& F_t
\end{eqnarray*}
such that $F_0=\mathrm{Id}_{M}$, where the last set is endowed with
the compact-open topology. We naturally extend this map to
$\mathbb{R}$ by writing $F_{t+1}=F_t\circ F_1$. We can also define
the inverse isotopy of $I$ as
$I^{-1}=(F_{-t})_{t\in[0,1]}=(F_{1-t}\circ F_1^{-1})_{t\in[0,1]}$.
We denote by $\mathrm{Homeo}_*(M)$ the set of all homeomorphisms of
$M$ that are isotopic to the identity.\smallskip

A \emph{path} on a manifold $M$ is a continuous map $\gamma:
J\rightarrow M$ defined on a nontrivial interval $J$ (up to an
increasing reparametrization). We can talk of a proper path (i.e.
$\gamma^{-1}(K)$ is compact for any compact set $K$) or a compact
path (i.e. $J$ is compact). When $\gamma$ is a compact path,
$\gamma(\inf J)$ and $\gamma(\sup J)$ are the \emph{ends} of
$\gamma$. We say that a compact path $\gamma$ is a \emph{loop} if
the two ends of $\gamma$ coincide. The inverse of the path $\gamma$
is defined by $\gamma^{-1}:t\mapsto\gamma(-t),\,t\in -J$. If
$\gamma_1: J_1\rightarrow M$ and $\gamma_2: J_2\rightarrow M$ are
two paths such that $$b_1=\sup J_1\in J_1,\quad a_2=\inf J_2\in
J_2,\quad\mathrm{and}\quad\gamma_1(b_1)=\gamma_2(a_2),$$ then the
\emph{concatenation of $\gamma_1$ and $\gamma_2$} is defined on
$J=J_1\cup(J_2+(b_1-a_2))$  in the classical way, where
$(J_2+(b_1-a_2))$ represents the translation of
 $J_2$ by $(b_1-a_2)$:
\begin{equation*}\gamma_1\gamma_2(t)=
\begin{cases}\gamma_1(t)& \textrm{if} \quad t\in J_1;
\\\gamma_2(t+a_2-b_1)& \textrm{if} \quad
t\in J_2+(b_1-a_2).\end{cases}
\end{equation*}

Let $\mathcal {I}$ be an interval (maybe infinite) of $\mathbb{Z}$.
If $\{\gamma_i:J_i\rightarrow M\}_{i\in\mathcal {I}}$ is a family of
compact paths satisfying that
$\gamma_i(\sup(J_i))=\gamma_{i+1}(\inf(J_{i+1}))$ for every
$i\in\mathcal {I}$, then we can define their concatenation as
$\prod_{i\in\mathcal {I}}\gamma_i$.

If $\{\gamma_i\}_{i\in \mathcal {I}}$ is a family of compact paths
where $\mathcal {I}=\bigsqcup_{j\in\mathcal {J}}\mathcal {I}_j$ and
$\mathcal {I}_j$ is an interval of $\mathbb{Z}$ such that
$\prod_{i\in\mathcal {I}_j}\gamma_i$ is well defined (in the
concatenation sense) for each $j\in\mathcal {J}$, we define their
\emph{product} by abusing notations:
$$\prod\limits_{i\in\mathcal
{I}}\gamma_i=\prod_{j\in\mathcal {J}}\prod\limits_{i\in\mathcal
{I}_j}\gamma_i.$$

The \emph{trajectory} of a point $z$ for the isotopy
$I=(F_t)_{t\in[0,1]}$ is the oriented path $I(z): t\mapsto F_t(z)$
defined on $[0,1]$. Suppose that $\{I_k\}_{1\leq k\leq k_0}$ is a
family of identity isotopies on $M$. Write
$I_k=(F_{k,t})_{t\in[0,1]}$. We can define a new identity isotopy
$I_{k_0}\cdots I_{2}I_1=(F_t)_{t\in[0,1]}$ by concatenation as
follows
\begin{equation}\label{the product operate of the symplectic group of isotopy}
    F_t(z)=F_{k,\,k_0t-(k-1)}(F_{k-1,1}\circ F_{k-2,1}\circ\cdots\circ F_{1,1}(z))\quad\mathrm{if}
\quad \frac{k-1}{k_0}\leq t\leq\frac{k}{k_0}.
\end{equation}
In particular, $I^{k_0}(z)=\prod_{k=0}^{k_0-1}I({F^k(z)})$ when
$I_k=I$ for all $1\leq k\leq k_0$.\smallskip

We write $\mathrm{Fix}(F)$ for the set of fixed points of $F$. A
fixed point $z$ of $F=F_1$ is \emph{contractible} if $I(z)$ is
homotopic to zero. We write $\mathrm{Fix}_{\mathrm{Cont},I}(F)$ for
the set of contractible fixed points of $F$, which obviously depends
on $I$.

\subsection{The algebraic intersection number}\label{sec:the
algebraic intersection number} Choosing an orientation on $M$
permits us to define the algebraic intersection number
$\Gamma\wedge\Gamma'$ between two loops. We keep the same notation
$\Gamma\wedge\gamma$ for the algebraic intersection number between a
loop $\Gamma$ and a path $\gamma$ when it is defined, e.g., when
$\gamma$ is proper or when $\gamma$ is a compact path whose
extremities are not in $\Gamma$. Similarly, we write
$\gamma\wedge\gamma'$ for the algebraic intersection number of two
paths $\gamma$ and $\gamma'$ when it is defined, e.g., when $\gamma$
and $\gamma'$ are compact paths and the ends of $\gamma$ (resp.
$\gamma'$) are not on $\gamma'$ (resp. $\gamma$). If $\Gamma$ is a
loop on a smooth manifold $M$, we write $[\Gamma]\in
H_1(M,\mathbb{Z})$ for the homology class of $\Gamma$. It is clear
that the value $\Gamma\wedge\gamma$ does not depend on the choice of
the path $\gamma$ whose endpoints are fixed when $[\Gamma]=0$.
\subsection{Rotation vector}\label{subsec:rotation vector}
Let us introduce the classical notion of rotation vector which was
originally defined in \cite{S}. Suppose that $F$ is the time-one map
of an identity isotopy $I=(F_t)_{t\in[0,1]}$. Let
$\mathrm{Rec}^+(F)$ be the set of positively recurrent points of
$F$. If $z\in \mathrm{Rec}^+(F)$, we fix an open disk $U\subset M$
containing $z$, and write $\{F^{n_k}(z)\}_{k\geq 1}$ for the
subsequence of the positive orbit of $z$ obtained by keeping the
points in $U$. For any $k\geq 0$, choose a simple path
$\gamma_{F^{n_k}(z),z}$ in $U$ joining $F^{n_k}(z)$ to $z$. The
homology class $[\Gamma_k]\in H_1(M,\mathbb{Z})$ of the loop
$\Gamma_k= I^{n_k}(z)\gamma_{F^{n_k}(z),z}$ is independent of the
choice of $\gamma_{F^{n_k}(z),z}$. We say that $z$ has a
\emph{rotation vector} $\rho_{M,I}(z)\in H_1(M,\mathbb{R})$ if
\[\lim_{l\rightarrow
+\infty}\frac{1}{n_{k_l}}[\Gamma_{k_l}]=\rho_{M,I}(z)\] for any
subsequence $\{F^{n_{k_l}}(z)\}_{l\geq 1}$ which converges to $z$.
Neither the existence nor the value of the rotation vector depends
on the choice of $U$.

Suppose that $M$ is compact and that $F$ is the time-one map of an
identity isotopy $I=(F_t)_{t\in[0,1]}$ on $M$. Recall that $\mathcal
{M}(F)$ is the set of Borel finite measures on $M$ whose elements
are invariant by $F$. If $\mu\in\mathcal {M}(F)$, we can define the
rotation vector $\rho_{M,I}(z)$ for $\mu$-almost every positively
recurrent points. Moreover, we can prove that the rotation vector is
uniformly bounded if it exists (see \cite[page 52]{W2}). Therefore,
we define the \emph{rotation vector of the measure} to be
$$\rho_{M,I}(\mu)=\int_M\rho_{M,I}\, \mathrm{d}\mu\in H_1(M,\mathbb{R}).$$

\subsection{Some results about identity isotopies}

\begin{rem}\label{rem: contractible fixed point and isotopy}
Suppose that $M$ is an oriented compact surface and that $F$ is the
time-one map of an identity isotopy $I=(F_t)_{t\in[0,1]}$ on $M$.
When $z\in \mathrm{Fix}_{\mathrm{Cont},I}(F)$, there is another
identity isotopy $I'=(F'_t)_{t\in[0,1]}$ homotopic to $I$ with fixed
endpoints such that $I'$ fixes $z$ (see, e.g., \cite[Proposition
2.15]{J}). That is, there is a continuous map $H:
[0,1]\times[0,1]\times M\rightarrow M$ such that
\begin{itemize}
\item $H(0,t,z)=F_t(z)$ and $H(1,t,z)=F'_t(z)$ for all $t\in[0,1]$;
\item $H(s,0,z)=\mathrm{Id}_{M}(z)$ and $H(s,1,z)=F(z)$ for all $s\in[0,1]$;
\item $F'_t(z)=z$ for all $t\in[0,1]$.
\end{itemize}

\end{rem}

\begin{lem}[\cite{W2}, page 54]\label{rem:identity isotopies fix three points on sphere}
Let $\mathbf{S}^2$ be the 2-sphere and $I=(F_t)_{t\in[0,1]}$ be an
identity isotopy on $\mathbf{S}^2$. For every three different fixed
points $z_i$ ($i=1,2,3$) of $F_1$, there exists another identity
isotopy $I'=(F'_t)_{t\in[0,1]}$ from $\mathrm{Id}_{\mathbf{S}^2}$ to
$F_1$ such that $I'$ fixes $z_i$ $(i=1,2,3)$.
\end{lem}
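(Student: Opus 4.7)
The plan is to exploit the sharp $3$-transitivity of the M\"obius group $\mathrm{PSL}(2,\mathbb{C}) = \mathrm{Aut}(\mathbb{CP}^1)$ acting on $\mathbf{S}^2$ by orientation-preserving homeomorphisms. For any ordered triple of pairwise distinct points $(w_1,w_2,w_3)$ in $\mathbf{S}^2$ there exists a unique M\"obius transformation $\phi$ with $\phi(w_i) = z_i$ for $i=1,2,3$, and $\phi$ depends continuously on the triple because $\mathrm{PSL}(2,\mathbb{C})$ acts freely and transitively on the ordered configuration space $\mathrm{Conf}_3(\mathbf{S}^2)$, so that the evaluation map $g \mapsto (g^{-1}(z_1),g^{-1}(z_2),g^{-1}(z_3))$ is a homeomorphism $\mathrm{PSL}(2,\mathbb{C}) \to \mathrm{Conf}_3(\mathbf{S}^2)$.

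For each $t \in [0,1]$ the points $F_t(z_1),F_t(z_2),F_t(z_3)$ are pairwise distinct (as $F_t$ is a homeomorphism and the $z_i$ are distinct), so the paragraph above yields a continuous family $(\phi_t)_{t\in[0,1]}$ of M\"obius transformations characterized by $\phi_t(F_t(z_i))=z_i$ for $i=1,2,3$. At the endpoints $t=0$ and $t=1$ the triple $(F_t(z_1),F_t(z_2),F_t(z_3))$ already coincides with $(z_1,z_2,z_3)$, hence by the uniqueness clause $\phi_0=\phi_1=\mathrm{Id}_{\mathbf{S}^2}$.

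I would then set $F'_t = \phi_t \circ F_t$. The family $I' = (F'_t)_{t\in[0,1]}$ is a continuous path of homeomorphisms of $\mathbf{S}^2$ with $F'_0 = \mathrm{Id}_{\mathbf{S}^2}$, $F'_1 = F_1$, and $F'_t(z_i)=z_i$ for all $t\in[0,1]$ and $i=1,2,3$, which is exactly the isotopy sought. The only delicate point is the continuity of $t\mapsto\phi_t$, which is immediate from the principal-homogeneous-space structure of $\mathrm{Conf}_3(\mathbf{S}^2)$ over $\mathrm{PSL}(2,\mathbb{C})$, or, equivalently, from the explicit cross-ratio formula that determines $\phi_t$ in any affine chart avoiding the finitely many relevant points; no substantial obstacle arises. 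A possible alternative would be to apply Remark \ref{rem: contractible fixed point and isotopy} twice in succession, first on $\mathbf{S}^2$ and then on $\mathbf{S}^2\setminus\{z_1\}\cong\mathbb{R}^2$, but this approach runs into a winding-number issue at the third step (on the cylinder $\mathbf{S}^2\setminus\{z_1,z_2\}$) which the M\"obius correction sidesteps entirely.
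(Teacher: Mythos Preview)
Your proposal is correct and takes essentially the same approach as the paper. The paper does not reprove the lemma but cites \cite[page 54]{W2}; however, the explicit formula quoted later (Formula~\ref{formula:isotopy}, and again in Section~\ref{sec:action is constant on the path connected components} and Equation~\ref{eq:new isotopy}) shows that the construction in \cite{W2} is precisely the M\"obius correction $F'_t=\phi_t\circ F_t$ you describe---written there as an affine map because in all applications one of the three points is $\infty$, so the relevant M\"obius transformations are $w\mapsto \dfrac{b-a}{F_t(b)-F_t(a)}\,(w-F_t(a))+a$.
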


As a consequence, we have the following corollary.

\begin{cor}\label{cor:identity isotopy fixes two points}
Let $I=(F_t)_{t\in[0,1]}$ be an identity isotopy on $\mathbb{C}$.
For any two different fixed points $z_1$ and $z_2$ of $F_1$, there
exists another identity isotopy $I'$ from $\mathrm{Id}_{\mathbb{C}}$
to $F_1$ such that $I'$ fixes $z_1$ and $z_2$.
\end{cor}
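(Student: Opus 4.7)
The plan is to reduce the corollary to Lemma~\ref{rem:identity isotopies fix three points on sphere} by viewing $\mathbb{C}$ as the complement of a point in $\mathbf{S}^2$, thereby acquiring a third fixed point ``for free''. Concretely, identify $\mathbf{S}^2 = \mathbb{C}\cup\{\infty\}$ and extend the isotopy $I=(F_t)_{t\in[0,1]}$ to a family $\hat{I}=(\hat{F}_t)_{t\in[0,1]}$ on $\mathbf{S}^2$ by setting $\hat{F}_t|_{\mathbb{C}}=F_t$ and $\hat{F}_t(\infty)=\infty$. Provided $\hat{I}$ is an identity isotopy on $\mathbf{S}^2$, we may apply Lemma~\ref{rem:identity isotopies fix three points on sphere} to the three fixed points $z_1$, $z_2$, and $\infty$ of $\hat{F}_1$ to obtain an isotopy $\hat{I}'$ from $\mathrm{Id}_{\mathbf{S}^2}$ to $\hat{F}_1$ that fixes all three. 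Since this isotopy preserves $\infty$, the restriction $I' := (\hat{F}_t'|_{\mathbb{C}})_{t\in[0,1]}$ is then the desired identity isotopy on $\mathbb{C}$.

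The only nontrivial point is to verify that the extension $\hat{I}$ really is continuous on $[0,1]\times\mathbf{S}^2$, and in particular that each $\hat{F}_t$ is a homeomorphism of $\mathbf{S}^2$. Continuity on $[0,1]\times\mathbb{C}$ is immediate from the continuity of $I$; the issue is joint continuity at points $(t_0,\infty)$, which requires showing that for every compact $K\subset\mathbb{C}$ there exist a neighborhood $U$ of $\infty$ in $\mathbf{S}^2$ and a neighborhood $V$ of $t_0$ in $[0,1]$ such that $\hat{F}_t(U)\cap K=\emptyset$ for $t\in V$. Equivalently, one needs $\bigcup_{t\in V}F_t^{-1}(K)$ to be relatively compact in $\mathbb{C}$.

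I would establish this by observing that $\mathrm{Homeo}(\mathbb{C})$ with the compact-open topology is a topological group (a classical theorem of Arens, valid for any locally compact Hausdorff space), so $t\mapsto F_t^{-1}$ is continuous and hence the evaluation $\phi:[0,1]\times\mathbb{C}\to\mathbb{C}$, $(t,z)\mapsto F_t^{-1}(z)$, is jointly continuous. Then $\phi([0,1]\times K)$ is a compact subset of $\mathbb{C}$ that contains $\bigcup_{t\in[0,1]}F_t^{-1}(K)$, which gives the required uniform control and hence continuity of $\hat{I}$ at each $(t_0,\infty)$. The main potential obstacle is precisely this uniform-properness statement at $\infty$, but the topological-group property reduces it to a routine compactness argument; once this is in hand, the reduction to the three-point lemma on $\mathbf{S}^2$ and the subsequent restriction back to $\mathbb{C}$ are formal.
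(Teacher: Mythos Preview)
Your proposal is correct and follows exactly the approach the paper intends: the corollary is stated immediately after Lemma~\ref{rem:identity isotopies fix three points on sphere} as a direct consequence, with no further proof given, the implicit argument being precisely to compactify $\mathbb{C}$ to $\mathbf{S}^2=\mathbb{C}\cup\{\infty\}$, extend the isotopy to fix $\infty$, apply the three-point lemma to $z_1,z_2,\infty$, and restrict back (indeed, the paper explicitly uses this one-point compactification and extension elsewhere, e.g.\ at the start of Section~\ref{subsec:the definition of a new linking number}). Your additional care in verifying joint continuity of the extension at $(t_0,\infty)$ via Arens' theorem is a welcome detail that the paper leaves tacit.
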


\begin{rem}\label{rem:some result of of sphere delete three points}
Let $z_i\in \mathbf{S}^2$ ($i=1,2,3$) and
$\mathrm{Homeo}_*(\mathbf{S}^2,{z_1,z_2,z_3})$ be the identity
component of the space of all homeomorphisms of $\mathbf{S}^2$
leaving $z_i$ ($i=1,2,3$) pointwise fixed (for the compact-open
topology). It is well known that
$\pi_1(\mathrm{Homeo}_*(\mathbf{S}^2,{z_1,z_2,z_3}))=0$
\cite{H2,Han}. It implies that any two identity isotopies $I ,I'
\subset \mathrm{Homeo}_*(\mathbf{S}^2,{z_1,z_2,z_3})$ with fixed
endpoints are homotopic. As a consequence, let
$\mathrm{Homeo}_*(\mathbb{C},{z_1,z_2})$ be the identity component
of the space of all homeomorphisms of $\mathbb{C}$ leaving two
different points $z_1$ and $z_2$ of $\mathbb{C}$ pointwise fixed, we
have $\pi_1(\mathrm{Homeo}_*(\mathbb{C},{z_1,z_2}))=0$.
\end{rem}

\subsection{The linking number on a connected subset of
the fixed points set}\label{subsec:boundedness}

Let $X$ be a connected component of
$\mathrm{Fix}_{\mathrm{Cont},I}(F)$. Either $X$ is
\emph{contractible}, which means it is included in an open disk. In
this case, the preimage of $X$ in the universal covering space is a
disjoint union of sets $\widetilde{X}$ such that the projection
induces a homeomorphim from $\widetilde{X}$ to $X$. Or $X$ is not
contractible, and in this case every connected component of the
preimage of $X$ is unbounded.\smallskip

Recall the linking number
$i(\widetilde{F};\widetilde{z},\widetilde{z}\,')$ for
$(\widetilde{z},\widetilde{z}\,')\in(\mathrm{Fix}(\widetilde{F})\times
\mathrm{Fix}(\widetilde{F}))\setminus\widetilde{\Delta}$ defined in
Formula \ref{eq:linking number for two fixed points}. To prove our
main results, we need the following three lemmas whose proofs are
provided in Appendix.

\begin{lem}\label{rem:linking number on connected set}
If $\widetilde{X}$ is a connected subset of
$\mathrm{Fix}(\widetilde{F})$ and
$\widetilde{z}\in\mathrm{Fix}(\widetilde{F})$, then
$i(\widetilde{F};\widetilde{z},\widetilde{z}\,')$
($\widetilde{z}\,'$ as variable, $\widetilde{z}\,'\neq
\widetilde{z}$) is a constant on $\widetilde{X}$. Furthermore, if
$\widetilde{X}$ is not reduced to a singleton,
$i(\widetilde{F};\cdot,\cdot)$ is a constant on
$(\widetilde{X}\times\widetilde{X})\setminus\widetilde{\Delta}$.\end{lem}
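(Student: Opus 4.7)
My plan is to combine joint continuity of the linking number on pairs of distinct fixed points with the symmetry $i(\widetilde{F};\widetilde{a},\widetilde{b})=i(\widetilde{F};\widetilde{b},\widetilde{a})$, and exploit the fact that an integer-valued continuous function on a connected space is constant. First I would prove joint continuity: if $\widetilde{a}\neq\widetilde{b}$ in $\mathrm{Fix}(\widetilde{F})$, injectivity of each homeomorphism $\widetilde{F}_t$ gives $\widetilde{F}_t(\widetilde{a})\neq\widetilde{F}_t(\widetilde{b})$ for all $t\in[0,1]$, and compactness of $[0,1]$ produces a uniform positive lower bound on $|\widetilde{F}_t(\widetilde{a})-\widetilde{F}_t(\widetilde{b})|$. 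A sufficiently small perturbation of the pair in $(\mathrm{Fix}(\widetilde{F})\times\mathrm{Fix}(\widetilde{F}))\setminus\widetilde{\Delta}$ therefore yields a $C^0$-small perturbation of the map $\xi:\mathbf{S}^1\to\mathbf{S}^1$ of Formula \ref{eq:linking number for two fixed points}, so its degree is preserved. For symmetry, swapping $\widetilde{a}$ and $\widetilde{b}$ replaces $\xi$ by $-\xi$, i.e.\ by $\xi$ composed with the antipodal map on $\mathbf{S}^1$, which is a rotation of degree one. The case $\widetilde{z}\notin\widetilde{X}$ of the first assertion then follows at once: $\widetilde{z}'\mapsto i(\widetilde{F};\widetilde{z},\widetilde{z}')$ is a continuous $\mathbb{Z}$-valued function on the connected set $\widetilde{X}$, hence constant.

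The case $\widetilde{z}\in\widetilde{X}$ with $|\widetilde{X}|\geq 2$ is more delicate because $\widetilde{X}\setminus\{\widetilde{z}\}$ may fail to be connected. Given $\widetilde{w}_1,\widetilde{w}_2\in\widetilde{X}\setminus\{\widetilde{z}\}$, I would approximate $\widetilde{z}$ by a sequence $\widetilde{z}_n\in\widetilde{X}\setminus\{\widetilde{z},\widetilde{w}_1,\widetilde{w}_2\}$ converging to $\widetilde{z}$; such a sequence exists since $\widetilde{z}$ cannot be isolated in $\widetilde{X}$, for otherwise the partition $\widetilde{X}=\{\widetilde{z}\}\sqcup(\widetilde{X}\setminus\{\widetilde{z}\})$ would contradict connectedness. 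Joint continuity then forces $i(\widetilde{F};\widetilde{z}_n,\widetilde{w}_j)=i(\widetilde{F};\widetilde{z},\widetilde{w}_j)$ for $n$ large and $j=1,2$, so the desired equality reduces to $i(\widetilde{F};\widetilde{z}_n,\widetilde{w}_1)=i(\widetilde{F};\widetilde{z}_n,\widetilde{w}_2)$ for some such $n$. The main obstacle lies precisely here, since $\widetilde{X}\setminus\{\widetilde{z}_n\}$ may itself be disconnected. I would attack it by a clopen/symmetry argument applied to $\Phi(\widetilde{a},\widetilde{b})=i(\widetilde{F};\widetilde{a},\widetilde{b})$ viewed on $\widetilde{X}^2\setminus\widetilde{\Delta}$: if $\Phi$ took more than one value, the resulting swap-invariant clopen partition of the level sets would, after projecting to $\widetilde{X}$ and combining with joint continuity across the approximating sequence $\widetilde{z}_n$, contradict the connectedness of $\widetilde{X}$ itself.

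Once the first assertion is established for every $\widetilde{z}\in\mathrm{Fix}(\widetilde{F})$, the second follows by a short chaining argument. Fix a reference point $\widetilde{z}_0\in\widetilde{X}$, and let $c$ denote the constant value of $i(\widetilde{F};\widetilde{z}_0,\cdot)$ on $\widetilde{X}\setminus\{\widetilde{z}_0\}$. For $(\widetilde{a},\widetilde{b})\in(\widetilde{X}\times\widetilde{X})\setminus\widetilde{\Delta}$: if $\widetilde{z}_0\in\{\widetilde{a},\widetilde{b}\}$, equality $i(\widetilde{F};\widetilde{a},\widetilde{b})=c$ follows directly from the choice of $c$ (using symmetry if $\widetilde{z}_0=\widetilde{b}$); otherwise both $\widetilde{a},\widetilde{b}\neq\widetilde{z}_0$, and applying the first assertion with $\widetilde{z}=\widetilde{a}\in\widetilde{X}$ yields $i(\widetilde{F};\widetilde{a},\widetilde{b})=i(\widetilde{F};\widetilde{a},\widetilde{z}_0)=i(\widetilde{F};\widetilde{z}_0,\widetilde{a})=c$ by symmetry.
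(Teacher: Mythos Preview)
Your treatment of the case $\widetilde{z}\notin\widetilde{X}$ and your derivation of the second assertion from the first are both fine and match the paper. The gap is in the case $\widetilde{z}\in\widetilde{X}$.

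Your approximation step is circular: you reduce showing $i(\widetilde{F};\widetilde{z},\widetilde{w}_1)=i(\widetilde{F};\widetilde{z},\widetilde{w}_2)$ to showing $i(\widetilde{F};\widetilde{z}_n,\widetilde{w}_1)=i(\widetilde{F};\widetilde{z}_n,\widetilde{w}_2)$ for some nearby $\widetilde{z}_n\in\widetilde{X}$, but this is the same problem with $\widetilde{z}_n$ in place of $\widetilde{z}$, and $\widetilde{X}\setminus\{\widetilde{z}_n\}$ may just as well be disconnected. The subsequent ``clopen/symmetry'' sketch does not close the gap either: the level sets $L_c=\Phi^{-1}(c)$ are indeed clopen and swap-invariant in $\widetilde{X}^2\setminus\widetilde{\Delta}$, but their first-coordinate projections to $\widetilde{X}$ have no reason to be disjoint---in fact, the very situation you are trying to exclude is a point $\widetilde{a}\in\widetilde{X}$ with $(\widetilde{a},\widetilde{w}_1)\in L_{c_1}$ and $(\widetilde{a},\widetilde{w}_2)\in L_{c_2}$. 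So ``projecting and contradicting connectedness of $\widetilde{X}$'' is not an argument as it stands.

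The paper resolves this with a short point-set topology trick that you are missing. Take $\widetilde{a}=\widetilde{z}\in\widetilde{X}$ and suppose $\widetilde{b},\widetilde{c}$ lie in different connected components of $\widetilde{X}\setminus\{\widetilde{a}\}$. Let $\widetilde{Y}$ be the component of $\widetilde{X}\setminus\{\widetilde{a}\}$ containing $\widetilde{c}$. Since $\widetilde{X}$ is connected, $\widetilde{a}$ lies in the closure of $\widetilde{Y}$, so $\widetilde{Y}\cup\{\widetilde{a}\}$ is connected; it avoids $\widetilde{b}$ (because $\widetilde{b}\notin\widetilde{Y}$ and $\widetilde{b}\neq\widetilde{a}$), hence is contained in the component $\widetilde{Z}$ of $\widetilde{X}\setminus\{\widetilde{b}\}$ through $\widetilde{a}$. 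Thus $\widetilde{a}$ and $\widetilde{c}$ lie in the \emph{same} component of $\widetilde{X}\setminus\{\widetilde{b}\}$, giving $i(\widetilde{F};\widetilde{b},\widetilde{a})=i(\widetilde{F};\widetilde{b},\widetilde{c})$; symmetrically $i(\widetilde{F};\widetilde{c},\widetilde{b})=i(\widetilde{F};\widetilde{c},\widetilde{a})$. Now the symmetry $i(\widetilde{F};\cdot,\cdot)=i(\widetilde{F};\cdot,\cdot)^{\mathrm{swap}}$ chains these into $i(\widetilde{F};\widetilde{a},\widetilde{b})=i(\widetilde{F};\widetilde{a},\widetilde{c})$. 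This is the missing idea; once you insert it, the rest of your write-up goes through.
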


\begin{lem}\label{lem:the linking number on a unbounded set}
If $\widetilde{X}$ is a connected unbounded subset of
$\mathrm{Fix}(\widetilde{F})$, then
$i(\widetilde{F};\widetilde{z},\widetilde{z}\,')=0$ for all
$(\widetilde{z},\widetilde{z}')\in\mathrm{Fix}(\widetilde{F})\times\widetilde{X}$
with $\widetilde{z}\neq \widetilde{z}\,'$. Consequently, if $X$ is a
connected component of $\mathrm{Fix}_{\mathrm{Cont},I}(F)$ and $X$
is not contractible,
$i(\widetilde{F};\widetilde{z},\widetilde{z}\,')=0$ for all
$(\widetilde{z},\widetilde{z}\,')\in\mathrm{Fix}(\widetilde{F})\times\pi^{-1}(X)$
with $\widetilde{z}\neq \widetilde{z}\,'$.\end{lem}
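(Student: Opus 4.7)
The plan is to combine Lemma \ref{rem:linking number on connected set} with the fact that the lifted isotopy $\widetilde{I}$ has uniformly bounded displacement on $\widetilde{M}$, which lets us push the second argument of the linking pairing to infinity inside $\widetilde{X}$. By Lemma \ref{rem:linking number on connected set}, once $\widetilde{z}\in\mathrm{Fix}(\widetilde{F})$ is fixed, $\widetilde{z}\,'\mapsto i(\widetilde{F};\widetilde{z},\widetilde{z}\,')$ is constant on $\widetilde{X}\setminus\{\widetilde{z}\}$, so it is enough to exhibit a single $\widetilde{z}\,'\in\widetilde{X}$ at which the linking number equals $0$.

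First I would equip $\widetilde{M}$ with the lifted metric $d$ for which the deck transformations act by isometries (Euclidean if $g=1$, hyperbolic if $g>1$). Compactness of $M$ and continuity of $I$ yield a constant $D>0$ such that $\widetilde{I}(\widetilde{x})\subset B(\widetilde{x},D)$ for every $\widetilde{x}\in\widetilde{M}$. Since $\widetilde{X}$ is unbounded, I may select $\widetilde{z}\,'\in\widetilde{X}$ with $d(\widetilde{z},\widetilde{z}\,')>2D$; the two trajectories $\widetilde{I}(\widetilde{z})$ and $\widetilde{I}(\widetilde{z}\,')$ are then contained in the disjoint topological disks $B(\widetilde{z},D)$ and $B(\widetilde{z}\,',D)$.

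To evaluate $i(\widetilde{F};\widetilde{z},\widetilde{z}\,')$, I identify $\widetilde{M}$ with $\mathbb{C}$ by a homeomorphism $h$ and follow $\eta_t=h(\widetilde{F}_t(\widetilde{z}\,'))-h(\widetilde{F}_t(\widetilde{z}))$, whose normalization $\eta_t/|\eta_t|$ is the map $\xi$ defining the linking number. Since $h(B(\widetilde{z},D))$ is a simply connected open disk disjoint from the loop $t\mapsto h(\widetilde{F}_t(\widetilde{z}\,'))$, I can deform $t\mapsto h(\widetilde{F}_t(\widetilde{z}))$ inside this disk to the constant loop at $h(\widetilde{z})$ without $\eta_t$ ever hitting $0$; by homotopy invariance of degree, the linking number equals the winding number of $t\mapsto h(\widetilde{F}_t(\widetilde{z}\,'))$ around $h(\widetilde{z})$. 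But this loop lies inside the topological disk $h(B(\widetilde{z}\,',D))$, which does not contain $h(\widetilde{z})$, so its winding number around $h(\widetilde{z})$ vanishes. For the consequence on non-contractible components $X$ of $\mathrm{Fix}_{\mathrm{Cont},I}(F)$, the dichotomy recalled at the beginning of Section \ref{subsec:boundedness} forces every connected component of $\pi^{-1}(X)$ to be unbounded, and I apply the first assertion to each such component.

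The most delicate step is the degree-preserving contraction: one must produce an explicit null-homotopy of $t\mapsto h(\widetilde{F}_t(\widetilde{z}))$ that stays inside the simply connected disk $h(B(\widetilde{z},D))$ while guaranteeing that the difference $\eta_t$ never vanishes throughout the homotopy. Both properties rest on the disjointness of $B(\widetilde{z},D)$ and $B(\widetilde{z}\,',D)$, but the construction inside a topological (rather than round) disk requires some care, and is the only technical point of the argument.
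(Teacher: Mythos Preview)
Your argument is correct and follows exactly the paper's route: constancy of $i(\widetilde{F};\widetilde{z},\cdot)$ on $\widetilde{X}\setminus\{\widetilde{z}\}$ from Lemma~\ref{rem:linking number on connected set}, combined with the vanishing of the linking number once $\widetilde{d}(\widetilde{z},\widetilde{z}\,')$ is large. The only cosmetic difference is that the paper records this last fact beforehand as property~(P4) and invokes it in one line, whereas you rederive it via the bounded-displacement/disjoint-disks argument; your handling of the consequence for non-contractible components is likewise identical to the paper's.
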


\begin{lem}\label{lem:fps is connected and B property}
We have the following properties:
\begin{enumerate}
  \item If $X$ is a connected subset of\,
$\mathrm{Fix}_{\mathrm{Cont},I}(F)$ and $X$ is not reduced to a
singleton, $I$ satisfies the B-property on $\pi^{-1}(X)$.
  \item $I$ satisfies the B-property if the number of the
connected components of $\mathrm{Fix}_{\mathrm{Cont},I}(F)$ is
finite. In particular, $I$ satisfies the B-property if the set
$\mathrm{Fix}_{\mathrm{Cont},I}(F)$ is connected.
\end{enumerate}

\end{lem}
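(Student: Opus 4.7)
The plan is to prove (1) by a case-split on whether $X$ is contractible in $M$, and to deduce (2) from (1) using the bounded displacement of the lifted isotopy. If $X$ is not contractible, every connected component $\widetilde{X}\,'$ of $\pi^{-1}(X)$ is unbounded, and Lemma \ref{lem:the linking number on a unbounded set} together with the symmetry $i(\widetilde{F};\widetilde{z},\widetilde{z}\,')=i(\widetilde{F};\widetilde{z}\,',\widetilde{z})$ of the linking number (which follows from the definition since the involution $w\mapsto-w$ of $\mathbf{S}^1$ has degree one) yields $i(\widetilde{F};\widetilde{a},\widetilde{b})=0$ for all $\widetilde{a}\in\pi^{-1}(X)$ and all $\widetilde{b}\in\mathrm{Fix}(\widetilde{F})\setminus\{\widetilde{a}\}$, so the B-property on $\pi^{-1}(X)$ holds with constant $0$.

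Suppose instead that $X$ is contractible, so that $X$ sits inside an open disk $U\subset M$ and each connected component of $\pi^{-1}(X)$ is a homeomorphic copy of $X$ via $\pi$, sitting inside a disjoint lift of $U$. Fix such a component $\widetilde{X}_0$ and two distinct points $\widetilde{a}_1,\widetilde{a}_2\in\widetilde{X}_0$. The key step is to glue the two functions $\widetilde{b}\mapsto i(\widetilde{F};\widetilde{a}_k,\widetilde{b})$ ($k=1,2$) into a single continuous integer-valued function $\iota:\mathrm{Fix}(\widetilde{F})\to\mathbb{Z}$, using whichever $\widetilde{a}_k\neq\widetilde{b}$ is available. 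Well-definedness on the overlap is exactly Lemma \ref{rem:linking number on connected set}: for $\widetilde{b}\in\widetilde{X}_0\setminus\{\widetilde{a}_1,\widetilde{a}_2\}$ the second part gives a common value $c_0$, while for $\widetilde{b}\notin\widetilde{X}_0$ the first part applied with $\widetilde{X}=\widetilde{X}_0$, combined with symmetry, gives $i(\widetilde{F};\widetilde{a}_1,\widetilde{b})=i(\widetilde{F};\widetilde{a}_2,\widetilde{b})$. Being integer-valued and continuous, $\iota$ is locally constant. I would then exploit that trajectories of the lifted isotopy have uniformly bounded diameter (inherited from compactness of $M$): if $\widetilde{b}$ is far enough from $\widetilde{a}_1$ in $\widetilde{M}$, the trajectories of $\widetilde{a}_1$ and of $\widetilde{b}$ lie in disjoint topological disks, so the degree defining $i(\widetilde{F};\widetilde{a}_1,\widetilde{b})$ vanishes. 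Thus $\iota\equiv 0$ outside a bounded region; on the compact intersection of that region with $\mathrm{Fix}(\widetilde{F})$, a locally constant integer-valued function takes only finitely many values, bounding $\iota$ by some $N_0$. Deck-transformation invariance $i(\widetilde{F};\gamma\widetilde{a},\gamma\widetilde{b})=i(\widetilde{F};\widetilde{a},\widetilde{b})$ and a further use of Lemma \ref{rem:linking number on connected set} then propagate the bound $\max(|c_0|,N_0)$ to every pair $(\widetilde{a},\widetilde{b})\in\pi^{-1}(X)\times\mathrm{Fix}(\widetilde{F})$ with $\widetilde{a}\neq\widetilde{b}$.

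Part (2) will follow by applying (1) to each non-singleton component and handling the singleton components by the same bounded-displacement argument: a singleton contributes only finitely many $\Gamma$-orbits of lifts within any bounded region, so only finitely many possibly nonzero linking values, and the maximum of all these bounds provides a uniform $N$. The main technical obstacle I anticipate is making the bounded-displacement/disjoint-disks argument rigorous when $\widetilde{M}\cong\mathbb{H}^2$ is hyperbolic, where the Euclidean structure used by the homeomorphism $h$ entering the definition of the linking number may be wildly distorted; a clean remedy is to extend $\widetilde{I}$ continuously to the one-point compactification $\widetilde{M}\cup\{\infty\}\cong\mathbf{S}^2$ by fixing $\infty$ (legitimate because $\widetilde{F}_t$ is a proper homeomorphism commuting with the cocompact deck action), and then to run the disjoint-disks argument on the sphere, where it is topologically transparent.
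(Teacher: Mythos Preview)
Your proposal is correct and follows essentially the same route as the paper: split on whether $X$ is contractible, invoke Lemma~\ref{lem:the linking number on a unbounded set} in the non-contractible case, and in the contractible case combine Lemma~\ref{rem:linking number on connected set} with local constancy (P1), deck-invariance (P2), and vanishing at large distance (P4) of the linking number. The only cosmetic difference is that the paper packages the bounded case as a short auxiliary lemma proved by a contradiction/compactness argument rather than via your glued function $\iota$; your worry about the hyperbolic metric is unnecessary, since the linking number is independent of the identifying homeomorphism $h:\widetilde M\to\mathbb C$, so P4 holds directly.
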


\section{The generalized action function revisited}\label{sec:definition of
action function} In this section, we recall the approach to defining
the generalized action function and show the continuity of this
function.\vspace{-2mm}
\subsection{The linking number of positively
recurrent points} \label{subsec:the definition of a new linking
number}\quad

Recall that $F$ is the time-one map of an identity isotopy
$I=(F_t)_{t\in[0,1]}$ on a closed oriented surface $M$ of genus
$g\geq 1$ and that $\widetilde{F}$ is the time-one map of the lifted
identity isotopy $\widetilde{I}=(\widetilde{F}_t)_{t\in[0,1]}$ on
the universal cover $\widetilde{M}$ of $M$. We can compactify
$\widetilde{M}$ into a sphere by adding a point $\infty$ at infinity
and then the lift $\widetilde{F}$ can be extended by fixing this
point.

For every distinct fixed points $\widetilde{a}$ and $\widetilde{b}$
of $\widetilde{F}$, by Lemma \ref{rem:identity isotopies fix three
points on sphere}, we can choose an isotopy $\widetilde{I}_1$ from
$\mathrm{Id}_{\widetilde{M}}$ to $\widetilde{F}$ that fixes
$\widetilde{a}$ and $\widetilde{b}$. Recall that $\pi:
\widetilde{M}\rightarrow M$ is the covering map.

Fix $z\in
\mathrm{Rec}^+(F)\setminus\pi(\{\widetilde{a},\widetilde{b}\})$ and
consider an open disk $U\subset
M\setminus\pi(\{\widetilde{a},\widetilde{b}\})$ containing $z$. We
define the first return map $\Phi: \mathrm{Rec}^+(F)\cap
U\rightarrow \mathrm{Rec}^+(F)\cap U$ and write
$\Phi(z)=F^{\tau(z)}(z)$, where $\tau(z)$ is the first return time,
that is, the least number $n\geq1$ such that $F^n(z)\in U$. 
For every pair $(z',z'')\in U^2$, we choose an oriented simple path
$\gamma_{z',z''}$ in $U$ from $z'$ to $z''$. Denote by
$\widetilde{\Phi}$ the lift
 of the first return map $\Phi$:
\begin{eqnarray*}
  \widetilde{\Phi}: \pi^{-1}(\mathrm{Rec}^+(F))\cap\pi^{-1}(U)&\rightarrow&
 \pi^{-1}(\mathrm{Rec}^+(F))\cap\pi^{-1}(U) \\
\widetilde{z} &\mapsto&
  \widetilde{F}^{\tau(\pi(\widetilde{z}))}(\widetilde{z}).
\end{eqnarray*}

For any $\widetilde{z}\in\pi^{-1}(U)$, we write $U_{\widetilde{z}}$
the connected component of $\pi^{-1}(U)$ that contains
$\widetilde{z}$. For each $j\geq1$, let
$\tau_j(z)=\sum\limits_{i=0}^{j-1}\tau(\Phi^i(z))$. For every
$n\geq1$, consider the following curves in $\widetilde{M}$:
$$
\widetilde{\Gamma}_{\widetilde{I}_1,\widetilde{z}}^n=\widetilde{I}_1^{\,\tau_n(z)}(\widetilde{z})
\widetilde{\gamma}_{\widetilde{\Phi}^n(\widetilde{z}),\widetilde{z}_n}\,,$$
where $\widetilde{z}_n\in \pi^{-1}(\{z\})\cap
\widetilde{U}_{\widetilde{\Phi}^n(\widetilde{z})}$ and
$\widetilde{\gamma}_{\widetilde{\Phi}^n(\widetilde{z}),\widetilde{z}_n}$
is the lift of $\gamma_{\Phi^n(z),z}$ that is contained in
$\widetilde{U}_{\widetilde{\Phi}^n(\widetilde{z})}$. We define the
following infinite product (see Section \ref{sec:Identity
isotopies}):
$$\widetilde{\Gamma}_{\widetilde{I}_1,z}^n=\prod_{\pi(\widetilde{z})=z}\widetilde
{\Gamma}_{\widetilde{I}_1,\widetilde{z}}^n\,.$$In particular, when
$z\in\mathrm{Fix}(F)$,
$\widetilde{\Gamma}_{\widetilde{I}_1,z}^1=\prod\limits_{\pi(\widetilde{z})=z}\widetilde{I}_1(\widetilde
z)$.

When
$\widetilde{U}_{\widetilde{\Phi}^n(\widetilde{z})}=\widetilde{U}_{\widetilde{z}}$,
the curve $\widetilde{\Gamma}_{\widetilde{I}_1,\widetilde{z}}^n$ is
a loop and hence $\widetilde{\Gamma}_{\widetilde{I}_1,z}^n$ is an
infinite family of loops, which will be called a \emph{multi-loop}.
When
$\widetilde{U}_{\widetilde{\Phi}^n(\widetilde{z})}\neq\widetilde{U}_{\widetilde{z}}$,
the curve $\widetilde{\Gamma}_{\widetilde{I}_1,\widetilde{z}}^n$ is
a compact path and $\widetilde{\Gamma}_{\widetilde{I}_1,z}^n$
 is therefore an infinite family of
 paths (it can be seen as a family of proper paths, which both ends of these paths go to
$\infty$), which will be called a \emph{multi-path}.

In the both cases, for every neighborhood $\widetilde{V}$ of
$\infty$, there must be finitely many loops or paths
$\widetilde{\Gamma}_{\widetilde{I}_1,\widetilde{z}}^n$ that are not
included in $\widetilde{V}$. By adding the point $\infty$ at
infinity, we get a multi-loop on the sphere
$\mathbf{S}=\widetilde{M}\sqcup\{\infty\}$.

Hence $\widetilde{\Gamma}_{\widetilde{I}_1,z}^n$ can be seen as a
multi-loop in the annulus
$A_{\widetilde{a},\widetilde{b}}=\mathbf{S}\setminus\{\widetilde{a},\widetilde{b}\}$
with a finite homology. As a consequence, if $\widetilde{\gamma}$ is
a path from $\widetilde{a}$ to $\widetilde{b}$, then the
intersection number
$\widetilde{\gamma}\wedge\widetilde{\Gamma}_{\widetilde{I}_1,z}^{n}$
is well defined and does not depend on $\widetilde{\gamma}$. By
Remark \ref{rem:some result of of sphere delete three points} and
the properties of intersection number, the intersection number
depends on $U$ but not on the choice of the identity isotopy
$\widetilde{I}_1$. Moreover, by observing that the path
$(\prod_{i=0}^{n-1}\gamma_{\Phi^{n-i}(z)
 ,\Phi^{n-i-1}(z)})(\gamma_{\Phi^n(z),z})^{-1}$ is a loop in $U$, we
 have
\begin{equation}\label{eq:Birkhoff sum}
\widetilde{\gamma}\wedge\widetilde{\Gamma}_{\widetilde{I}_1,z}^{n}=
\widetilde{\gamma}\wedge\prod_{j=0}^{n-1}\widetilde{\Gamma}_{\widetilde{I}_1,\Phi^j(z)}^{1}
=\sum_{j=0}^{n-1}\widetilde{\gamma}\wedge\widetilde{\Gamma}_{\widetilde{I}_1,\Phi^j(z)}^{1}.
\end{equation}
\smallskip

For $n\geq1$, we can define the function $$L_{n}:
((\mathrm{Fix}(\widetilde{F})\times
\mathrm{Fix}(\widetilde{F}))\setminus\widetilde{\Delta})\times
(\mathrm{Rec}^+(F)\cap U)\rightarrow \mathbb{Z},$$
\begin{equation}\label{eq: Ln Birkhoff sum}
    L_{n}(\widetilde{F};\widetilde{a},\widetilde{b},z)=\widetilde{\gamma}\wedge
\widetilde{\Gamma}^n_{\widetilde{I}_1,z}=\sum\limits_{j=0}^{n-1}
L_1(\widetilde{F};\widetilde{a},\widetilde{b},\Phi^j(z)),
\end{equation}
where $U\subset M\setminus\pi(\{\widetilde{a},\widetilde{b}\})$. The
last equality of Equation \ref{eq: Ln Birkhoff sum} follows from
Equation \ref{eq:Birkhoff sum}. And the function $L_n$ depends on
$U$ but not on the choice of $\gamma_{\Phi^n(z),z}$.

\begin{defn}\label{def:Intersection number density} Fix $z
\in \mathrm{Rec}^+(F)\setminus\pi(\{\widetilde{a},\widetilde{b}\})$.
We say that the linking number
$i(\widetilde{F};\widetilde{a},\widetilde{b},z)\in \mathbb{R}$ is
defined, if
$$\lim_{k\rightarrow +\infty} \frac{L_{n_k}(\widetilde{F};\widetilde{a},\widetilde{b},z)}{\tau_{n_{k}}(z)}
=i(\widetilde{F};\widetilde{a},\widetilde{b},z)$$ for any
subsequence $\{\Phi^{n_{k}}(z)\}_{k\geq 1}$ of
$\{\Phi^{n}(z)\}_{n\geq 1}$ which converges to $z$.
\end{defn}
Note here that the linking number
$i(\widetilde{F};\widetilde{a},\widetilde{b},z)$ does not depend on
$U$ since if $U$ and $U'$ are open disks containing $z$, there
exists a disk containing $z$ that is contained in $U\cap U'$. In
particular, when $z\in\mathrm{Fix}(F)\setminus\pi(\{\widetilde{a}
,\widetilde{b}\})$, the linking number
$i(\widetilde{F};\widetilde{a},\widetilde{b},z)$ always exists and
is equal to $L_1(\widetilde{F};\widetilde{a},\widetilde{b},z)$.
Moreover, if $z\in\mathrm{Fix}_{\mathrm{Cont},I}(F)$, we have
\cite[page 57]{W2}
\begin{equation}\label{linking number of fixed points}
    i(\widetilde{F};\widetilde{a},\widetilde{b},z)=\sum_{\pi(\widetilde{z})=z}
\left(i(\widetilde{F};\widetilde{a},\widetilde{z})-i(\widetilde{F};\widetilde{b},\widetilde{z})\right).
\end{equation}

\subsection{Some elementary properties of the linking number \cite[Section 4.5.2]{W2}}\quad

\begin{prop}\label{prop:alpha- one exists then
 other exists}
Let $G$ be the covering transformation group of $\pi:
\widetilde{M}\rightarrow M$. For every $\alpha\in G$, all distinct
fixed points $\widetilde{a}$ and $\widetilde{b}$ of $\widetilde{F}$,
and every $z\in
\mathrm{Rec}^+(F)\setminus\pi(\{\widetilde{a},\widetilde{b}\})$, we
have
$L_n(\widetilde{F};\alpha(\widetilde{a}),\alpha(\widetilde{b}),z)=
L_n(\widetilde{F};\widetilde{a},\widetilde{b},z)$ for each $n$. If
$i(\widetilde{F};\widetilde{a},\widetilde{b},z)$ exists, then
$i(\widetilde{F};\alpha(\widetilde{a}),\alpha(\widetilde{b}),z)$
also exists and
$i(\widetilde{F};\alpha(\widetilde{a}),\alpha(\widetilde{b}),z)=
i(\widetilde{F};\widetilde{a},\widetilde{b},z)$.
\end{prop}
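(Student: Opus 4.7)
The plan is to transport the entire construction defining $L_n(\widetilde{F};\widetilde{a},\widetilde{b},z)$ by the deck transformation $\alpha$, using two facts: that $\widetilde{F}$ commutes with $\alpha$ (since $\widetilde{F}$ arises from lifting an isotopy starting at the identity), and that $L_n$ is independent of the choice of the auxiliary isotopy $\widetilde{I}_1$ used in the definition.

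First I would produce a valid auxiliary isotopy fixing the translated pair. Pick $\widetilde{I}_1$ homotopic to $\widetilde{I}$ with fixed extremities and fixing $\widetilde{a},\widetilde{b}$ (using Lemma \ref{rem:identity isotopies fix three points on sphere} on the sphere $\mathbf{S}=\widetilde{M}\sqcup\{\infty\}$). Then set $\widetilde{I}_1':=(\alpha\circ\widetilde{F}_{1,t}\circ\alpha^{-1})_{t\in[0,1]}$. Because $\alpha\widetilde{F}=\widetilde{F}\alpha$, the map $\widetilde{I}_1'$ is an identity isotopy from $\mathrm{Id}_{\widetilde{M}}$ to $\widetilde{F}$, and it manifestly fixes $\alpha(\widetilde{a})$ and $\alpha(\widetilde{b})$. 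Since $\pi\circ\alpha=\pi$, the same open disk $U\subset M\setminus\pi(\{\widetilde{a},\widetilde{b}\})$ can be used to compute both quantities, and the first return map $\Phi$ and return times $\tau_j(z)$ are unchanged.

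Next I would match up the multi-loops. For each $\widetilde{z}\in\pi^{-1}(z)$ put $\widetilde{z}':=\alpha^{-1}(\widetilde{z})$. Iterating $\alpha\widetilde{F}=\widetilde{F}\alpha$ yields $\widetilde{\Phi}^n(\widetilde{z})=\alpha(\widetilde{\Phi}^n(\widetilde{z}'))$, so $\widetilde{U}_{\widetilde{\Phi}^n(\widetilde{z})}=\alpha(\widetilde{U}_{\widetilde{\Phi}^n(\widetilde{z}')})$. Choosing the connecting arc $\widetilde{\gamma}_{\widetilde{\Phi}^n(\widetilde{z}),\widetilde{z}_n}$ to be the $\alpha$-image of $\widetilde{\gamma}_{\widetilde{\Phi}^n(\widetilde{z}'),\widetilde{z}'_n}$ (legitimate, since $L_n$ does not depend on this choice), and using that the extended isotopy satisfies $\widetilde{I}_1'{}_{,t}=\alpha\circ\widetilde{I}_{1,t}\circ\alpha^{-1}$ for all $t\geq 0$, I get the identification
\[\widetilde{\Gamma}^{\,n}_{\widetilde{I}_1',\widetilde{z}}\;=\;\alpha\bigl(\widetilde{\Gamma}^{\,n}_{\widetilde{I}_1,\widetilde{z}'}\bigr).\]
Taking the product over $\pi(\widetilde{z})=z$, and using that $\alpha$ permutes the fiber $\pi^{-1}(z)$, I obtain $\widetilde{\Gamma}^{\,n}_{\widetilde{I}_1',z}=\alpha(\widetilde{\Gamma}^{\,n}_{\widetilde{I}_1,z})$.

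Finally I would compute the intersection number. Let $\widetilde{\gamma}$ be a path from $\widetilde{a}$ to $\widetilde{b}$; then $\alpha(\widetilde{\gamma})$ is a path from $\alpha(\widetilde{a})$ to $\alpha(\widetilde{b})$. Since the universal cover of an oriented surface is canonically oriented and deck transformations preserve this orientation, $\alpha$ preserves algebraic intersection numbers. Therefore
\[L_n(\widetilde{F};\alpha(\widetilde{a}),\alpha(\widetilde{b}),z)=\alpha(\widetilde{\gamma})\wedge\alpha(\widetilde{\Gamma}^{\,n}_{\widetilde{I}_1,z})=\widetilde{\gamma}\wedge\widetilde{\Gamma}^{\,n}_{\widetilde{I}_1,z}=L_n(\widetilde{F};\widetilde{a},\widetilde{b},z),\]
which proves the equality for every $n$. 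The statement about the linking number $i(\widetilde{F};\cdot,\cdot,z)$ then follows immediately from Definition \ref{def:Intersection number density} by dividing by $\tau_{n_k}(z)$ and passing to the limit along any subsequence $\Phi^{n_k}(z)\to z$.

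The only mildly delicate step is the bookkeeping in the multi-loop/multi-path identification: one must verify that the choices of connecting arcs and of representatives $\widetilde{z}$ in each fiber can be made $\alpha$-equivariantly, and that adding the point at infinity does not disturb the $\alpha$-invariance (which is immediate since $\alpha$ extends to the sphere by fixing $\infty$). Once this is written out carefully, the proof is essentially a naturality statement and involves no estimates.
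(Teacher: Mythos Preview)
Your argument is correct. The paper itself does not give a proof of this proposition; it is stated in Section~3.2 with a citation to \cite[Section 4.5.2]{W2}. Your naturality argument---conjugating the auxiliary isotopy by the deck transformation $\alpha$, using that $\alpha$ commutes with $\widetilde{F}$ and permutes the fibres, and invoking the stated independence of $L_n$ from both the choice of $\widetilde{I}_1$ and the connecting arcs---is exactly the expected approach and matches what one would reconstruct from the paper's setup.
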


\begin{prop}\label{lem:i is 3coboundary for point}
For all distinct fixed points $\widetilde{a}$, $\widetilde{b}$ and
$\widetilde{c}$ of $\widetilde{F}$, and every $z\in
\mathrm{Rec}^+(F)\setminus\pi(\{\widetilde{a},\widetilde{b},\widetilde{c}\})$,
we have
$L_n(\widetilde{F};\widetilde{a},\widetilde{b},z)+L_n(\widetilde{F};\widetilde{b},
\widetilde{c},z)+L_n(\widetilde{F};\widetilde{c},\widetilde{a},z)=0$
for each $n$. Moreover, if two among the three linking numbers
$i(\widetilde{F};\widetilde{a}, \widetilde{b},z)$,
$i(\widetilde{F};\widetilde{b},\widetilde{c},z)$ and
$i(\widetilde{F};\widetilde{c},\widetilde{a},z)$ exist, then the
third one also exists and we have
$$i(\widetilde{F};\widetilde{a},\widetilde{b},z)+i(\widetilde{F};\widetilde{b},
\widetilde{c},z)+i(\widetilde{F};\widetilde{c},\widetilde{a},z)=0.$$
\end{prop}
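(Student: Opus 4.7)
My plan is to choose a single identity isotopy on the sphere that fixes $\widetilde{a}$, $\widetilde{b}$, $\widetilde{c}$ all at once, express the three values of $L_n$ as intersection numbers of three paths with one common multi-loop, and observe that the concatenation of these paths is a loop in the simply connected universal cover $\widetilde{M}$, hence null-homologous on the sphere $\mathbf{S}=\widetilde{M}\sqcup\{\infty\}$.

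First, apply Lemma \ref{rem:identity isotopies fix three points on sphere} to the sphere $\mathbf{S}$ and the three (extended) fixed points $\widetilde{a}$, $\widetilde{b}$, $\widetilde{c}$ of $\widetilde{F}$ to obtain an identity isotopy $\widetilde{I}_1'$ from $\mathrm{Id}_{\mathbf{S}}$ to $\widetilde{F}$ that pointwise fixes $\widetilde{a}$, $\widetilde{b}$, $\widetilde{c}$. By Remark \ref{rem:some result of of sphere delete three points}, any two identity isotopies on $\mathbf{S}$ fixing a common pair of points are homotopic with fixed endpoints, so using $\widetilde{I}_1'$ in the definition of $L_n$ for any one of the three pairs yields the same integer as the original pair-fixing isotopies employed in Section \ref{subsec:the definition of a new linking number}. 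Fix an open disk $U\subset M\setminus\pi(\{\widetilde{a},\widetilde{b},\widetilde{c}\})$ containing $z$ and let $\widetilde{\Gamma}_{\widetilde{I}_1',z}^n$ be the corresponding multi-loop on $\mathbf{S}$; since $\widetilde{I}_1'$ fixes all three base points, this multi-loop avoids $\widetilde{a}$, $\widetilde{b}$, and $\widetilde{c}$.

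Next, choose auxiliary paths $\widetilde{\gamma}_{\widetilde{a},\widetilde{b}}$, $\widetilde{\gamma}_{\widetilde{b},\widetilde{c}}$, $\widetilde{\gamma}_{\widetilde{c},\widetilde{a}}$ in $\widetilde{M}$ connecting consecutive points; their concatenation $\widetilde{\Gamma}=\widetilde{\gamma}_{\widetilde{a},\widetilde{b}}\widetilde{\gamma}_{\widetilde{b},\widetilde{c}}\widetilde{\gamma}_{\widetilde{c},\widetilde{a}}$ is a loop in $\widetilde{M}$. By additivity of the algebraic intersection number with respect to concatenation of paths,
\[
L_n(\widetilde{F};\widetilde{a},\widetilde{b},z)+L_n(\widetilde{F};\widetilde{b},\widetilde{c},z)+L_n(\widetilde{F};\widetilde{c},\widetilde{a},z)=\widetilde{\Gamma}\wedge\widetilde{\Gamma}_{\widetilde{I}_1',z}^n.
\]
Since $\widetilde{M}$ is homeomorphic to $\mathbb{C}$ and hence simply connected, $\widetilde{\Gamma}$ is contractible, so $[\widetilde{\Gamma}]=0$ in $H_1(\mathbf{S},\mathbb{Z})$. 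The multi-loop is a $1$-cycle on $\mathbf{S}$ (its finitely many non-closed constituents close up at $\infty$), so the intersection number vanishes, which proves the first identity.

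For the second assertion, divide the finite-$n$ equality by $\tau_n(z)$ and pass to limits along a subsequence $\Phi^{n_k}(z)\to z$ as in Definition \ref{def:Intersection number density}. If two of the three ratios converge, the identity forces the third ratio to converge as well, and the three limits satisfy the claimed cocycle relation. The genuinely delicate step is the first one: guaranteeing that a single isotopy can be chosen to fix all three points simultaneously so that one common multi-loop computes all three $L_n$'s, which requires both Lemma \ref{rem:identity isotopies fix three points on sphere} and the uniqueness-up-to-homotopy statement of Remark \ref{rem:some result of of sphere delete three points}.
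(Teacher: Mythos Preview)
Your overall strategy---use one isotopy simultaneously fixing $\widetilde{a},\widetilde{b},\widetilde{c}$, concatenate the three paths into a null-homologous loop, and conclude---is the natural one, and the second assertion about limits is fine once the first identity is established. However, there is a genuine gap in the first step.

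You invoke Remark~\ref{rem:some result of of sphere delete three points} to claim that ``any two identity isotopies on $\mathbf{S}$ fixing a common pair of points are homotopic with fixed endpoints.'' This is false: the remark says that $\pi_1(\mathrm{Homeo}_*(\mathbf{S}^2,z_1,z_2,z_3))=0$, i.e.\ isotopies fixing \emph{three} common points are homotopic. With only two common marked points one has $\pi_1(\mathrm{Homeo}_*(\mathbf{S}^2,z_1,z_2))\cong\mathbb{Z}$, generated by a full rotation about the axis through $z_1,z_2$. Your isotopy $\widetilde{I}_1'$ fixes $\{\widetilde{a},\widetilde{b},\widetilde{c}\}$, while the isotopies used in Section~\ref{subsec:the definition of a new linking number} to define $L_n(\widetilde{F};\widetilde{a},\widetilde{b},z)$ fix $\{\widetilde{a},\widetilde{b},\infty\}$; these share only $\widetilde{a}$ and $\widetilde{b}$, so Remark~\ref{rem:some result of of sphere delete three points} does not apply.

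This is not merely cosmetic: the construction of the multi-loop $\widetilde{\Gamma}^n_{\widetilde{I}_1,z}$ as a $1$-cycle with \emph{finite} homology in the annulus $A_{\widetilde{a},\widetilde{b}}$ relies on the isotopy fixing $\infty$, so that all but finitely many constituents lie in any given neighbourhood of $\infty$. Your $\widetilde{I}_1'$ need not fix $\infty$; the trajectory $\widetilde{I}_1'(\infty)$ is then a loop whose winding number about $\widetilde{a}$ equals (up to sign) the linking number $i(\widetilde{F};\widetilde{a},\widetilde{c})$, and when this is nonzero the infinitely many constituents near $\infty$ each contribute the same nonzero amount to the intersection with $\widetilde{\gamma}_{\widetilde a,\widetilde b}$, so the putative intersection number diverges. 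In general one \emph{cannot} find an isotopy fixing all four points $\widetilde{a},\widetilde{b},\widetilde{c},\infty$, since the mapping class group of the four-punctured sphere is nontrivial. The argument in \cite[Section~4.5.2]{W2} instead keeps $\infty$ fixed throughout and compares the three multi-loops (built from isotopies fixing $\{\widetilde{a},\widetilde{b},\infty\}$, $\{\widetilde{b},\widetilde{c},\infty\}$, $\{\widetilde{c},\widetilde{a},\infty\}$ respectively), tracking the correction terms that arise from changing the isotopy; these corrections are precisely governed by the pairwise linking numbers and cancel in the cyclic sum.
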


The following lemma gives the continuity property of the function
$L_k$.
\begin{lem}\label{lem:L1anaz}Suppose that
$\widetilde{a}\in\mathrm{Fix}(\widetilde{F})$ and
$\{\widetilde{a}_n\}_{n\geq1}\subset
\mathrm{Fix}(\widetilde{F})\setminus\{\widetilde{a}\}$ satisfying
$\widetilde{a}_n\rightarrow\widetilde{a}$ as $n\rightarrow+\infty$.
Then we have
\begin{itemize}
  \item $\lim_{n\rightarrow+\infty}i(\widetilde{F};\widetilde{a}_n,\widetilde{a},z)
=0$ for $z\in \mathrm{Fix}(F)\setminus\{\pi(\widetilde{a})\}$;
  \item $\lim_{n\rightarrow+\infty}L_k(\widetilde{F};\widetilde{a}_n,\widetilde{a},z)=0$
  for every $k\geq1$ and $z\in \mathrm{Rec}^+(F)\cap U$, where $U$ is an open disk of $M\setminus\{\pi(\widetilde{a})\}$.
\end{itemize}

\end{lem}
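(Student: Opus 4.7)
The plan is to reduce both bullets to the single claim that $L_1(\widetilde{F};\widetilde{a}_n,\widetilde{a},z')$ is \emph{eventually} zero as $n\to+\infty$, where $z'\in\mathrm{Rec}^+(F)\cap U$ and $U$ is any open disk of $M$ with $\pi(\widetilde{a})\notin U$, and then to establish this claim by a geometric construction. For the first bullet, take $U$ to be any open disk containing $z$ with $\pi(\widetilde{a})\notin U$; since $\mathrm{Fix}(F)\subset\mathrm{Rec}^+(F)$, the linking number $i(\widetilde{F};\widetilde{a}_n,\widetilde{a},z)$ is well defined and equals $L_1(\widetilde{F};\widetilde{a}_n,\widetilde{a},z)$ by the remark following Definition \ref{def:Intersection number density}. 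For the second bullet, Equation \ref{eq: Ln Birkhoff sum} gives $L_k(\widetilde{F};\widetilde{a}_n,\widetilde{a},z)=\sum_{j=0}^{k-1}L_1(\widetilde{F};\widetilde{a}_n,\widetilde{a},\Phi^j(z))$, a finite sum of terms of the required form (each $\Phi^j(z)\in U$).

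\textbf{Geometric setup.} I would first fix, using Corollary \ref{cor:identity isotopy fixes two points} applied to $\widetilde{a}$ and the point $\infty$ of the sphere compactification $\mathbf{S}=\widetilde{M}\sqcup\{\infty\}$, an isotopy $\widetilde{I}=(\widetilde{F}_t)_{t\in[0,1]}$ from $\mathrm{Id}_{\widetilde{M}}$ to $\widetilde{F}$ that fixes $\widetilde{a}$. Then I would choose a small open disk $V$ around $\widetilde{a}$ meeting two conditions: (i) $V\cap\pi^{-1}(U)=\emptyset$, possible because $\pi(\widetilde{a})\notin U$ implies that $\pi^{-1}(U)$ has positive distance from $\widetilde{a}$; (ii) $V$ is disjoint from each compact trajectory $\widetilde{I}^{\tau(z')}(\widetilde{w})$, for $\widetilde{w}\in\pi^{-1}(z')$, that meets a prescribed compact neighborhood of $\widetilde{a}$. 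Only finitely many such trajectories exist, by proper discontinuity of the deck group (trajectories at different preimages are covering-translates of one compact trajectory, and only finitely many translates of a compact set meet another compact set), and each such trajectory avoids $\widetilde{a}$ because $\widetilde{F}_t$ fixes $\widetilde{a}$ and is injective at every time $t$.

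\textbf{Local modification and conclusion.} For $n$ large, $\widetilde{a}_n\in V$ and the compact loop $\{\widetilde{F}_t(\widetilde{a}_n)\}_{t\in[0,1]}$ sits inside a small disk $W_n\subset V\setminus\{\widetilde{a}\}$ (it avoids $\widetilde{a}$ by injectivity and shrinks to $\widetilde{a}$ by continuity of $\widetilde{I}$). Using the contractibility of the homeomorphism group of a disk relative to its boundary, I would construct a continuous family $(h_t)_{t\in[0,1]}$ of homeomorphisms of $\widetilde{M}$ supported in $W_n$ with $h_t(\widetilde{F}_t(\widetilde{a}_n))=\widetilde{a}_n$ for every $t$ and $h_0=h_1=\mathrm{Id}_{\widetilde{M}}$. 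Setting $\widetilde{F}_t^{(n)}:=h_t\circ\widetilde{F}_t$ then yields an isotopy $\widetilde{I}_1^{(n)}$ from $\mathrm{Id}_{\widetilde{M}}$ to $\widetilde{F}$ fixing both $\widetilde{a}$ and $\widetilde{a}_n$, admissible for computing $L_1$. Pick $\widetilde{\gamma}_n\subset V$ a simple arc from $\widetilde{a}_n$ to $\widetilde{a}$. Then the multi-loop $\widetilde{\Gamma}^1_{\widetilde{I}_1^{(n)},z'}$ misses $V$: its return paths lie in $\pi^{-1}(U)$, disjoint from $V$ by (i); and each of its trajectories coincides with the original $\widetilde{I}^{\tau(z')}$-trajectory, since the modification only alters points inside $W_n\subset V$ while, by (ii), no original trajectory enters $V$. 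Therefore $L_1(\widetilde{F};\widetilde{a}_n,\widetilde{a},z')=\widetilde{\gamma}_n\wedge\widetilde{\Gamma}^1_{\widetilde{I}_1^{(n)},z'}=0$ for all large $n$.

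\textbf{Main obstacle.} The crucial technical step is the local modification producing $\widetilde{I}_1^{(n)}$: the isotopy must fix both $\widetilde{a}$ and $\widetilde{a}_n$ but coincide with $\widetilde{I}$ away from a shrinking disk, so that the multi-loop can be made to miss a shrinking neighborhood of $\widetilde{a}$ containing $\widetilde{\gamma}_n$. The legitimacy of using $\widetilde{I}_1^{(n)}$ rests on the independence of the intersection number from the choice of isotopy fixing $\widetilde{a},\widetilde{a}_n,\infty$, guaranteed by Remark \ref{rem:some result of of sphere delete three points}.
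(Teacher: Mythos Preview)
Your overall strategy is sound, and the reduction to showing $L_1(\widetilde{F};\widetilde{a}_n,\widetilde{a},z')=0$ for large $n$ is correct. However, there is a genuine gap in the local modification step. You claim that the loop $\{\widetilde{F}_t(\widetilde{a}_n)\}_{t\in[0,1]}$ lies inside a disk $W_n\subset V\setminus\{\widetilde{a}\}$, justifying this only by ``it avoids $\widetilde{a}$ by injectivity and shrinks to $\widetilde{a}$ by continuity.'' But avoiding a point and shrinking toward it does not place the loop in a simply connected region missing that point: the loop may \emph{wind} around $\widetilde{a}$. Indeed, since your $\widetilde{I}$ fixes $\widetilde{a}$, the winding number of $t\mapsto\widetilde{F}_t(\widetilde{a}_n)$ about $\widetilde{a}$ is exactly the linking number $i_{\widetilde{I}}(\widetilde{a},\widetilde{a}_n)$, which has no reason to vanish (and local constancy off the diagonal says nothing as $\widetilde{a}_n\to\widetilde{a}$). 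If this winding is nonzero, no disk $W_n$ as you describe exists, and moreover the family $(h_t)$ you seek cannot exist: the evaluation map from homeomorphisms of a disk fixing the boundary and the interior point $\widetilde{a}$ to the punctured disk has contractible total space, so loops in the base lift to loops in the total space only when they are null-homotopic.

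The gap is repairable: before attempting the modification, pre-compose $\widetilde{I}$ with a compactly supported rotation of angle $-2\pi k_n t$ about $\widetilde{a}$ (where $k_n$ is the winding number), supported inside $V$; this cancels the winding without disturbing trajectories that already avoid $V$. But this extra step is not in your write-up. The paper sidesteps the issue entirely by writing down the explicit affine isotopy
\[
\widetilde{I}_n(\widetilde{z})(t)=\frac{\widetilde{a}_n-\widetilde{a}}{\widetilde{F}_t(\widetilde{a}_n)-\widetilde{F}_t(\widetilde{a})}\cdot\bigl(\widetilde{F}_t(\widetilde{z})-\widetilde{F}_t(\widetilde{a})\bigr)+\widetilde{a},
\]
which fixes $\widetilde{a}$, $\widetilde{a}_n$, and $\infty$ simultaneously, and then estimating $|\widetilde{I}_n(\widetilde{z})(t)-\widetilde{a}|$ directly for $\widetilde{z}\in\pi^{-1}(\{z,F(z),\dots,F^{\tau_k(z)-1}(z)\})$ to show all relevant trajectories stay outside a disk of radius $2|\widetilde{a}_n-\widetilde{a}|$ about $\widetilde{a}$. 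This computation needs no control on winding numbers and handles $L_k$ for all $k$ at once.
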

\begin{proof}Let
$C_z^k=\pi^{-1}(\{z,F(z),\cdots,F^{\tau_k(z)-1}(z)\})$, where
$\tau_k(z)=\sum_{i=0}^{k-1}\tau(\Phi^i(z))$.

For each $n$, let $\widetilde{I}_n$ be the isotopy that fixes
$\widetilde{a}$, $\widetilde{a}_n$ and $\infty$, as constructed in
Lemma \ref{rem:identity isotopies fix three points on sphere}. Up to
conjugacy by a homeomorphism $h:\widetilde{M}\rightarrow
\mathbb{C}$, we can identify $\widetilde{M}$ with the complex plane
$\mathbb{C}$ (refer to Remark \ref{rem:some result of of sphere
delete three points} and Section \ref{subsec:the definition of a new
linking number} for the reasons). Through a simple computation (see
the proof of Lemma \ref{rem:identity isotopies fix three points on
sphere} \cite[page 54]{W2}), we can get the formula of
$\widetilde{I}_n$ as follows
\begin{equation}\label{formula:isotopy}
   \widetilde{I}_n(\widetilde{z})(t)=\frac{\widetilde{a}_n-\widetilde{a}}{\widetilde{F}_t
   (\widetilde{a}_n)-\widetilde{F}_t(\widetilde{a})}\cdot(\widetilde{F}_t(\widetilde{z})-\widetilde{F}_t(\widetilde{a}))
   +\widetilde{a}.
\end{equation}
Let $\widetilde{V}_n$ be a disk whose center is $\widetilde{a}$ and
whose radius is $2|\widetilde{a}_n-\widetilde{a}|$. As the functions
$L_k(\widetilde{F};\widetilde{a}_n,\widetilde{a},z)$ do not depend
on the path from $\widetilde{a}$ to $\widetilde{a}_n$ (see Section
\ref{subsec:the definition of a new linking number}), we can suppose
that the path $\widetilde{\gamma}$ from $\widetilde{a}_n$ to
$\widetilde{a}$ is always in $\widetilde{V}_n$.  Since $z\neq
\pi(\widetilde{a})$, the value
\begin{equation}\label{formula:c}
    c=\liminf_{n\geq1}\min_{t\in[0,1],\widetilde{z}\in
C_z^k}|\widetilde{F}_t(\widetilde{z})-\widetilde{F}_t(\widetilde{a}_n)|
\end{equation}
is positive and merely depends on $z$ and $k$. For the constant $c$,
we can find $N>0$ large enough such that
$\max_{t\in[0,1]}|\widetilde{F}_t(\widetilde{a}_n)-\widetilde{F}_t(\widetilde{a})|<c/3$
when $n\geq N$. This implies that, for every $\widetilde{z}\in
C_z^k$, $t\in[0,1]$, and $n\geq N$,
$$|\widetilde{I}_n(\widetilde{z})(t)-\widetilde{a}|>2|\widetilde{a}_n-\widetilde{a}|.$$
As a consequence, we have
$$\lim_{n\rightarrow+\infty}i(\widetilde{F};\widetilde{a}_n,\widetilde{a},z)
=0, \quad \text{for } z\in
\mathrm{Fix}(F)\setminus\{\pi(\widetilde{a})\}$$ and
$$\lim_{n\rightarrow+\infty}L_k(\widetilde{F};\widetilde{a}_n,\widetilde{a},z)=0, \quad \text{for } z\in \mathrm{Rec}^+(F)\cap U.$$
\end{proof}

\subsection{Definition of the generalized action
function}\quad\smallskip

Recall that $F$ is the time-one map of an identity isotopy
$I=(F_t)_{t\in[0,1]}$ on $M$. In \cite[page 85]{W2}, we have proved
that the function $i(\widetilde{F};\widetilde{a},\widetilde{b},z)$
is $\mu$-integrable in each of the following cases:
\begin{enumerate}
  \item the map $F$ and its inverse $F^{-1}$ are
differentiable at $\pi(\widetilde{a})$ and $\pi(\widetilde{b})$;
  \item the isotopy $I$ satisfies the WB-property at $\widetilde{a}$
and $\widetilde{b}$ and the measure $\mu$ has full support;
  \item the isotopy $I$ satisfies the WB-property at $\widetilde{a}$
and $\widetilde{b}$ and the measure $\mu$ is ergodic.
\end{enumerate}

Suppose now the function
$i(\widetilde{F};\widetilde{a},\widetilde{b},z)$ is
$\mu$-integrable. We define the \emph{action difference of
$\widetilde{a}$ and $\widetilde{b}$} as follows
\begin{equation}\label{eq:imu}
    i_{\mu}(\widetilde{F};\widetilde{a},\widetilde{b})= \int_{M\setminus\pi(\{\widetilde{a},\widetilde{b}\})}
i(\widetilde{F};\widetilde{a},\widetilde{b},z)\,\mathrm{d}\mu.
\end{equation}

As an immediate consequence of Proposition \ref{prop:alpha- one
exists then
 other exists}, we have:

\begin{cor}\label{cor:imu(F,alphaa,alphab)=imu(F,a,b)}
$i_{\mu}(\widetilde{F};\alpha(\widetilde{a}),\alpha(\widetilde{b}))=i_{\mu}(\widetilde{F};\widetilde{a},\widetilde{b})$
for any $\alpha\in G$.
\end{cor}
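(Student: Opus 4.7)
The plan is to reduce the corollary directly to the pointwise identity already established in Proposition \ref{prop:alpha- one exists then other exists}. First I would observe that since $\alpha\in G$ is a covering transformation, $\pi\circ\alpha=\pi$, so $\pi(\{\alpha(\widetilde{a}),\alpha(\widetilde{b})\})=\pi(\{\widetilde{a},\widetilde{b}\})$. Consequently the two integrals in question are integrals over the same subset $M\setminus\pi(\{\widetilde{a},\widetilde{b}\})$ of $M$, so it suffices to compare the integrands.

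Next, by the hypothesis that $i_\mu(\widetilde{F};\widetilde{a},\widetilde{b})$ is defined, the function $z\mapsto i(\widetilde{F};\widetilde{a},\widetilde{b},z)$ is $\mu$-integrable and, in particular, exists for $\mu$-almost every $z$ in the domain. For each such $z$, Proposition \ref{prop:alpha- one exists then other exists} guarantees that $i(\widetilde{F};\alpha(\widetilde{a}),\alpha(\widetilde{b}),z)$ also exists and equals $i(\widetilde{F};\widetilde{a},\widetilde{b},z)$. Hence the two integrands coincide $\mu$-almost everywhere, which yields both the $\mu$-integrability of $z\mapsto i(\widetilde{F};\alpha(\widetilde{a}),\alpha(\widetilde{b}),z)$ and the equality of integrals asserted by the corollary.

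There is no real obstacle: the entire content of the statement is packaged in the pointwise invariance proved in Proposition \ref{prop:alpha- one exists then other exists}, and the only bookkeeping needed is (i) recognizing that $\pi$-invariance of $\alpha$ leaves the domain of integration unchanged, and (ii) transferring the pointwise equality to the level of integrals via almost-everywhere agreement. If one wished to be slightly more careful, one could first verify the corresponding identity $L_n(\widetilde{F};\alpha(\widetilde{a}),\alpha(\widetilde{b}),z)=L_n(\widetilde{F};\widetilde{a},\widetilde{b},z)$ for every $n$ and every relevant $z$ (also given by Proposition \ref{prop:alpha- one exists then other exists}), and then pass to the limit along subsequences $\{\Phi^{n_k}(z)\}$ converging to $z$ in the definition of the linking number, but this step is already built into the quoted proposition.
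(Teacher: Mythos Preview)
Your proposal is correct and matches the paper's approach exactly: the paper simply states that the corollary is an immediate consequence of Proposition~\ref{prop:alpha- one exists then other exists}, and you have spelled out the routine verification (same domain of integration since $\pi\circ\alpha=\pi$, pointwise equality of integrands $\mu$-a.e.) that this entails.
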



 We suppose now that the action
difference $i_{\mu}(\widetilde{F};\widetilde{a},\widetilde{b})$ is
well defined for arbitrary two distinct fixed points $\widetilde{a}$
and $\widetilde{b}$ of $\widetilde{F}$. We define the \emph{action
difference} as follows:
\begin{eqnarray*}
  i_{\mu}: (\mathrm{Fix}(\widetilde{F})\times
\mathrm{Fix}(\widetilde{F}))\setminus\widetilde{\Delta}
&\rightarrow&\mathbb{R} \\
  (\widetilde{a},\widetilde{b}) &\mapsto&
  i_{\mu}(\widetilde{F};\widetilde{a},\widetilde{b}).
\end{eqnarray*}

Note that for each of the following cases, the action difference can
be defined  \cite[page 86]{W2} for every pair
$(\widetilde{a},\widetilde{b})\in(\mathrm{Fix}(\widetilde{F})\times
\mathrm{Fix}(\widetilde{F}))\setminus\widetilde{\Delta}$~:
\begin{itemize}
\item $F\in\mathrm{Diff}(M)$;
\item $I$ satisfies the
WB-property and $\mu$ has full support;
\item $I$ satisfies the
WB-property and $\mu$ is ergodic.
\end{itemize}

The following corollary is an immediate conclusion of Proposition
\ref{lem:i is 3coboundary for point}:
\begin{cor}\label{clm:I is coboundary}
For any distinct fixed points $\widetilde{a}$,
 $\widetilde{b}$ and $\widetilde{c}$ of $\widetilde{F}$, we have
$$i_{\mu}(\widetilde{F};\widetilde{a},\widetilde{b})+i_{\mu}(\widetilde{F};\widetilde{b},\widetilde{c})+i_{\mu}(\widetilde{F};\widetilde{c},\widetilde{a})=0.$$
That is, $i_{\mu}$ is a coboundary on $\mathrm{Fix}(\widetilde{F})$.
So there is a function $l_{\mu}:
\mathrm{Fix}(\widetilde{F})\rightarrow\mathbb{R}$, defined up to an
additive constant, such that
\begin{equation*}\label{eq:imu and lmu}
    i_{\mu}(\widetilde{F};\widetilde{a},\widetilde{b})=l_{\mu}(\widetilde{F};\widetilde{b})-l_{\mu}(\widetilde{F};\widetilde{a}).
\end{equation*}
\end{cor}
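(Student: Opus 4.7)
The plan is to lift the pointwise three-cocycle identity from Proposition \ref{lem:i is 3coboundary for point} to an integrated identity for $i_\mu$, then construct $l_\mu$ by fixing a base point. Concretely, for any three pairwise distinct fixed points $\widetilde a, \widetilde b, \widetilde c$ of $\widetilde F$ and any $z\in \mathrm{Rec}^+(F)\setminus\pi(\{\widetilde a,\widetilde b,\widetilde c\})$, Proposition \ref{lem:i is 3coboundary for point} gives
$$i(\widetilde F;\widetilde a,\widetilde b,z)+i(\widetilde F;\widetilde b,\widetilde c,z)+i(\widetilde F;\widetilde c,\widetilde a,z)=0.$$
The plan is to integrate this identity over $M$ with respect to $\mu$ and obtain
$$i_\mu(\widetilde F;\widetilde a,\widetilde b)+i_\mu(\widetilde F;\widetilde b,\widetilde c)+i_\mu(\widetilde F;\widetilde c,\widetilde a)=0.$$

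For the integration to produce the three action differences exactly as in Equation \ref{eq:imu}, I would check that the set of $z$ where the relation holds covers $\mu$-almost every point. Two things contribute to the excluded set: the finite set $\pi(\{\widetilde a,\widetilde b,\widetilde c\})$, which lies in $\mathrm{Fix}_{\mathrm{Cont},I}(F)$ and hence is $\mu$-null because $\mu\in\mathcal M(F)$ has no atoms on $\mathrm{Fix}_{\mathrm{Cont},I}(F)$; and the set where $z\notin\mathrm{Rec}^+(F)$, which is $\mu$-null by Poincaré recurrence. Under each of the three hypotheses listed just before the statement of the corollary, the three summands $i(\widetilde F;\cdot,\cdot,z)$ are separately $\mu$-integrable, so linearity of the integral over $M\setminus\pi(\{\widetilde a,\widetilde b,\widetilde c\})$ delivers the claim immediately.

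To produce $l_\mu$, I would fix any base point $\widetilde{a}_0\in\mathrm{Fix}(\widetilde F)$ and define $l_\mu(\widetilde F;\widetilde b)=i_\mu(\widetilde F;\widetilde a_0,\widetilde b)$ for $\widetilde b\neq\widetilde a_0$, with $l_\mu(\widetilde F;\widetilde a_0)=0$. Applying the coboundary identity with $\widetilde c=\widetilde a_0$ then yields
$$i_\mu(\widetilde F;\widetilde a,\widetilde b)=-i_\mu(\widetilde F;\widetilde b,\widetilde a_0)-i_\mu(\widetilde F;\widetilde a_0,\widetilde a)=l_\mu(\widetilde F;\widetilde b)-l_\mu(\widetilde F;\widetilde a),$$
which is the asserted formula. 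Changing the base point only shifts $l_\mu$ by an additive constant, as stated.

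The argument is essentially a direct integration, so there is no substantial obstacle; the only point requiring a brief justification is that the three-point exclusion set carries zero $\mu$-mass, which is exactly why the definition of $\mathcal M(F)$ forbids atoms on $\mathrm{Fix}_{\mathrm{Cont},I}(F)$.
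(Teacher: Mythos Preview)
Your proposal is correct and matches the paper's approach exactly: the paper states that the corollary is ``an immediate conclusion of Proposition \ref{lem:i is 3coboundary for point}'', and you have supplied precisely the integration argument and base-point construction that make this immediate. The only implicit step you use without comment is the antisymmetry $i_\mu(\widetilde F;\widetilde b,\widetilde a_0)=-i_\mu(\widetilde F;\widetilde a_0,\widetilde b)$, which is clear from the definition via reversal of the path $\widetilde\gamma$.
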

We call the function $l_\mu$ the \emph{action function} on
$\mathrm{Fix}(\widetilde{F})$ deduced by the measure $\mu$.
\smallskip

From Corollary \ref{cor:imu(F,alphaa,alphab)=imu(F,a,b)} and
Corollary \ref{clm:I is coboundary}, we have the following
proposition:

\begin{prop}[\cite{W2}, page 87]\label{clm:L is well defined}
If $\rho_{M,I}(\mu)=0$, then
$i_{\mu}(\widetilde{F};\widetilde{a},\alpha(\widetilde{a})) = 0$ for
each $\widetilde{a}\in \mathrm{Fix}(\widetilde{F})$ and each
$\alpha\in G\setminus\{e\}$, where $e$ is the unit element of $G$.
As a consequence, there exists a function $L_{\mu}$ defined on
$\mathrm{Fix}_{\mathrm{Cont},I}(F)$ such that for every two distinct
fixed points $\widetilde{a}$ and $\widetilde{b}$ of $\widetilde{F}$,
we have
$$i_{\mu}(\widetilde{F};\widetilde{a},\widetilde{b})=
L_{\mu}(\widetilde{F};\pi(\widetilde{b}))-L_{\mu}(\widetilde{F};\pi(\widetilde{a}))
.$$
\end{prop}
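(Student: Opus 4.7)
The plan is to prove the identity $i_\mu(\widetilde{F};\widetilde{a},\alpha(\widetilde{a})) = [\alpha]\cdot\rho_{M,I}(\mu)$, where $[\alpha]\in H_1(M,\mathbb{Z})$ is the homology class of any loop on $M$ representing $\alpha$ under the canonical identification $G\simeq\pi_1(M,\pi(\widetilde{a}))$, and $\cdot$ denotes the intersection pairing on $H_1(M;\mathbb{R})$. Under the hypothesis $\rho_{M,I}(\mu)=0$, this identity yields the first claim, and the existence of $L_\mu$ on $\mathrm{Fix}_{\mathrm{Cont},I}(F)$ then follows by descending the coboundary primitive $l_\mu$ provided by Corollary~\ref{clm:I is coboundary}.

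To establish the key identity, fix $\widetilde{a}\in\mathrm{Fix}(\widetilde{F})$ and $\alpha\in G\setminus\{e\}$, set $\widetilde{b}=\alpha(\widetilde{a})$, and choose a path $\widetilde{\gamma}$ in $\widetilde{M}$ from $\widetilde{a}$ to $\widetilde{b}$; its projection $\gamma=\pi\circ\widetilde{\gamma}$ is a loop in $M$ with $[\gamma]=[\alpha]$. The first step is the covering-theoretic identity
\[L_n(\widetilde{F};\widetilde{a},\widetilde{b},z)\;=\;\widetilde{\gamma}\wedge\widetilde{\Gamma}^n_{\widetilde{I}_1,z}\;=\;\gamma\wedge\Gamma^n_{I_1,z},\]
where $\Gamma^n_{I_1,z}=I_1^{\tau_n(z)}(z)\,\gamma_{\Phi^n(z),z}$ is the common projection to $M$ of the branches $\widetilde{\Gamma}^n_{\widetilde{I}_1,\widetilde{z}}$. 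This equality holds because each transverse intersection of $\gamma$ with $\Gamma^n_{I_1,z}$ lifts uniquely to a point of $\widetilde{\gamma}$, which in turn lies on exactly one branch of the multi-loop $\widetilde{\Gamma}^n_{\widetilde{I}_1,z}$, with signs preserved by the local homeomorphism $\pi$, and these lifts account for every intersection point upstairs (the finiteness of the count following from the observation recalled in Section~\ref{subsec:the definition of a new linking number} that only finitely many branches avoid any fixed neighbourhood of $\infty$). Since $\gamma\wedge\Gamma^n_{I_1,z}$ is the algebraic intersection of two loops on the closed oriented surface $M$, it coincides with the homological pairing $[\gamma]\cdot[\Gamma^n_{I_1,z}]$.

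Dividing by $\tau_n(z)$ and passing to the limit along any subsequence with $\Phi^{n_k}(z)\to z$, the definition of the rotation vector recalled in Section~\ref{subsec:rotation vector} gives $[\Gamma^{n_k}_{I_1,z}]/\tau_{n_k}(z)\to\rho_{M,I}(z)$ for every recurrent $z$ that admits a rotation vector. Therefore
\[i(\widetilde{F};\widetilde{a},\alpha(\widetilde{a}),z)\;=\;[\alpha]\cdot\rho_{M,I}(z)\quad\text{for $\mu$-a.e.\ $z$.}\]
Under each of the three integrability hypotheses recalled at the beginning of Section~\ref{sec:definition of action function}, the left-hand side lies in $L^1(M,\mu)$, and integrating by linearity of the intersection pairing produces
\[i_\mu(\widetilde{F};\widetilde{a},\alpha(\widetilde{a}))\;=\;[\alpha]\cdot\int_M\rho_{M,I}(z)\,\mathrm{d}\mu(z)\;=\;[\alpha]\cdot\rho_{M,I}(\mu),\]
which vanishes under the hypothesis $\rho_{M,I}(\mu)=0$.

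For the consequence, the function $l_\mu$ constructed in Corollary~\ref{clm:I is coboundary} satisfies $l_\mu(\alpha(\widetilde{a}))-l_\mu(\widetilde{a})=i_\mu(\widetilde{F};\widetilde{a},\alpha(\widetilde{a}))=0$ for every $\alpha\in G$ and every $\widetilde{a}\in\mathrm{Fix}(\widetilde{F})$. Hence $l_\mu$ is $G$-invariant on $\mathrm{Fix}(\widetilde{F})$ and descends to a well-defined function $L_\mu$ on $\pi(\mathrm{Fix}(\widetilde{F}))=\mathrm{Fix}_{\mathrm{Cont},I}(F)$ with the required coboundary property. The main obstacle is the bookkeeping for the identity $L_n=\gamma\wedge\Gamma^n_{I_1,z}$ and its identification with the homological pairing $[\gamma]\cdot[\Gamma^n_{I_1,z}]$; the transition from the pointwise identity to the integrated one is then routine, given the $L^1$-statements already recalled.
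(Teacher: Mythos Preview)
Your proof is correct and follows essentially the same route as the paper: although Proposition~\ref{clm:L is well defined} is only cited from \cite{W2} here, the identical computation appears in Section~6 (proof of Theorem~\ref{cor:the symplectic action when M with genus bigger 1}), where one shows $i_\mu(\widetilde{F};\widetilde{a},\alpha(\widetilde{a}))=\varphi(\alpha)\wedge\rho_{M,I}(\mu)$ by reducing $\widetilde{\gamma}\wedge\widetilde{\Gamma}^n_{\widetilde{I}',z}$ to $\pi(\widetilde{\gamma})\wedge\Gamma^n_{I',z}$ and passing to the rotation vector. The one point worth making explicit in your write-up is the choice of $\widetilde{I}_1$: for the branches $\widetilde{\Gamma}^n_{\widetilde{I}_1,\widetilde{z}}$ to share a common projection $\Gamma^n_{I_1,z}$ you need $\widetilde{I}_1$ to be the lift of an isotopy on $M$, and such an isotopy fixing $a=\pi(\widetilde{a})$ exists by Remark~\ref{rem: contractible fixed point and isotopy} (its lift then fixes every point of $\pi^{-1}(a)$, in particular both $\widetilde{a}$ and $\alpha(\widetilde{a})$); since $L_n$ is independent of the choice of $\widetilde{I}_1$, this specialization is free.
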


We call the function $L_\mu$ the \emph{action function} on
$\mathrm{Fix}_{\mathrm{Cont},I}(F)$ defined by the measure $\mu$. We
proved that the function $L_\mu$ is a generalization of the
classical case (Theorem \ref{thm:PW}, \cite[Theorem 4.3.2]{W2}). By
the construction of $i_\mu$, the following property holds:
\begin{prop}\label{prop:invariant in homotopic sense} The action difference $i_{\mu}$
 (hence the action  $l_{\mu}$) only depends on the
homotopic class with fixed endpoints of $I$. Moreover, $i_{\mu}$
only depends on the time-one map $F$ when $g>1$ and $i_{\mu}$
depends on the homotopic class of $I$ when $g=1$. The same property
holds for $I_{\mu}$ (hence $L_{\mu}$) which defines in the case
where $\rho_{M,I}(\mu)=0$.
\end{prop}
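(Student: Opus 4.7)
The plan is to reduce everything to the statement that, for every fixed $n\geq 1$, the integer $L_n(\widetilde{F};\widetilde{a},\widetilde{b},z)$ of Equation \ref{eq: Ln Birkhoff sum} depends only on the rel-endpoints homotopy class of $I$. Once this is shown, the limit $i(\widetilde{F};\widetilde{a},\widetilde{b},z)$ inherits the same invariance, and so do the integrated quantity $i_\mu(\widetilde{F};\widetilde{a},\widetilde{b})$ and the coboundary primitive $l_\mu$ produced by Corollary \ref{clm:I is coboundary}. Since Proposition \ref{clm:L is well defined} extracts $L_\mu$ (equivalently $I_\mu$) from $i_\mu$ with no further homotopy-dependent input, the corresponding assertion for $L_\mu$ and $I_\mu$ in the Hamiltonian case $\rho_{M,I}(\mu)=0$ is then automatic.

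To prove the homotopy invariance of $L_n$, suppose $I^{(0)}$ and $I^{(1)}$ are two rel-endpoints homotopic isotopies from $\mathrm{Id}_M$ to $F$, joined by a continuous family $(I^{(s)})_{s\in[0,1]}$. Their lifts $\widetilde{I}^{(s)}$ form a rel-endpoints homotopy of isotopies from $\mathrm{Id}_{\widetilde{M}}$ to a common lift $\widetilde{F}$, extended to the sphere $\mathbf{S}=\widetilde{M}\cup\{\infty\}$ by fixing $\infty$. By iterating Remark \ref{rem: contractible fixed point and isotopy} (which replaces an isotopy by a rel-endpoints homotopic one fixing a prescribed contractible fixed point) together with the triviality of $\pi_1(\mathrm{Homeo}_*(\mathbf{S},\{\widetilde{a},\widetilde{b},\infty\}))$ recorded in Remark \ref{rem:some result of of sphere delete three points}, the family $(\widetilde{I}^{(s)})$ can be modified within its rel-endpoints homotopy class so that every member fixes the three points $\widetilde{a}$, $\widetilde{b}$, $\infty$ throughout. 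For any $z\notin\pi(\{\widetilde{a},\widetilde{b}\})$ the trajectory $\widetilde{I}^{(s)}(z)$ then stays inside $\widetilde{M}\setminus\{\widetilde{a},\widetilde{b}\}$ for all $s$ and $t$, so the multi-loop $\widetilde{\Gamma}^n_{\widetilde{I}^{(s)},z}$ deforms continuously in the annulus $A_{\widetilde{a},\widetilde{b}}$, and the intersection number $\widetilde{\gamma}\wedge\widetilde{\Gamma}^n_{\widetilde{I}^{(s)},z}$ is an integer-valued continuous function of $s$, therefore constant.

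The dichotomy between $g>1$ and $g=1$ comes from the structure of $\pi_1(\mathrm{Homeo}_*(M))$. When $g>1$, Hamstrom's theorem \cite{H2} gives $\pi_1(\mathrm{Homeo}_*(\Sigma_g))=0$, so any two isotopies from $\mathrm{Id}_M$ to $F$ are automatically rel-endpoints homotopic, and by the previous step $i_\mu$ descends to a function of $F$ alone. When $g=1$ one has $\pi_1(\mathrm{Homeo}_*(\mathbb{T}^2))\simeq\mathbb{Z}^2$; two isotopies from $\mathrm{Id}$ to $F$ whose difference is a nontrivial loop class $(p,q)$ produce lifts $\widetilde{F}$ and $T_{(p,q)}\circ\widetilde{F}$ that differ by a deck translation, their fixed-point sets are in general distinct, and $i_\mu$ genuinely depends on the homotopy class of $I$ rather than merely on $F$. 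The main obstacle is the normalization step in the previous paragraph, namely showing that a rel-endpoints homotopy of sphere isotopies can be chosen to fix three prescribed points uniformly in $s$; both Remark \ref{rem: contractible fixed point and isotopy} and the triviality of $\pi_1$ for the three-point stabilizer are indispensable here, the former to impose the fixing of one point and the latter to accommodate the remaining two without leaving the prescribed homotopy class.
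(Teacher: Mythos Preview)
Your argument is correct, but you are working harder than necessary. The paper treats this proposition as an immediate consequence of the construction: in Section~\ref{subsec:the definition of a new linking number} it is already established that the intersection number $\widetilde{\gamma}\wedge\widetilde{\Gamma}^n_{\widetilde{I}_1,z}$ ``depends on $U$ but not on the choice of the identity isotopy $\widetilde{I}_1$'' (via Remark~\ref{rem:some result of of sphere delete three points}). In other words, $L_n(\widetilde{F};\widetilde{a},\widetilde{b},z)$ is, by definition, a function of the lifted time-one map $\widetilde{F}$ alone. Since two rel-endpoints homotopic isotopies $I^{(0)}$, $I^{(1)}$ lift to isotopies with the \emph{same} time-one map $\widetilde{F}$, the invariance of $L_n$, $i(\widetilde{F};\cdot,\cdot,\cdot)$, $i_\mu$, $l_\mu$, $I_\mu$, $L_\mu$ is immediate, and the $g>1$ versus $g=1$ dichotomy follows exactly as you say.

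Your route---building a continuous one-parameter family of isotopies, normalizing each member to fix $\widetilde{a}$, $\widetilde{b}$, $\infty$, and invoking continuity of an integer-valued intersection number---reproves the independence of $L_n$ from $\widetilde{I}_1$ rather than citing it. The normalization step you flag as the ``main obstacle'' is genuine but mild: the explicit M\"obius-type formula~\eqref{formula:isotopy} depends continuously on the input isotopy, so the modification can indeed be made uniformly in $s$. Nothing is wrong; you have simply unpacked what the paper compressed into the phrase ``by the construction of $i_\mu$''.
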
\smallskip


\subsection{The continuity of the generalized action
function}\label{sec:The continuity of the generalized action
function}\quad\smallskip

We have the following continuity property of the generalized action
function whose proof details will be also used in the proof of
Theorem \ref{prop:F is not constant if the contractible fixed points
is finite}.

\begin{prop}\label{prop:the continuity of lmu} Suppose that $F$ is the time-one map of
an identity isotopy $I$ on $M$ and that $\mu\in\mathcal{M}(F)$. Let
$\widetilde{X}\subseteq \mathrm{Fix}(\widetilde{F})$. If $I$
satisfies the B-property on $\widetilde{X}$ and $F$ is
$\mu$-symplectic, then we have
$$\lim_{n\rightarrow +\infty}i_\mu(\widetilde{F};\widetilde{a}_n,\widetilde{a})=0$$
for any $\widetilde{a}\in\widetilde{X}$ and
$\{\widetilde{a}_n\}_{n\geq1}\subset
\widetilde{X}\setminus\{\widetilde{a}\}$ satisfying
$\widetilde{a}_n\rightarrow\widetilde{a}$ as $n\rightarrow+\infty$
\footnote{\,In fact, it is also true for the following cases (refer
the proof of Proposition 6.8 in ArXiv:1106.1104):
\begin{enumerate}
  \item $I$
satisfies the B-property on $\widetilde{X}$ and
$F\in\mathrm{Diff}(M)$; 
  \item $I$
satisfies the B-property on $\widetilde{X}$ and $\mu$ is ergodic.
\end{enumerate}}. As a
conclusion, if $I$ satisfies the B-property on $\widetilde{X}$ and
the WB-property (on $\mathrm{Fix}(\widetilde{F})$), the action
$l_\mu$ is continuous on $\widetilde{X}$. Moreover, if $I$ is
$\mu$-Hamiltonian, the action function $L_\mu$ is continuous on
$\pi(\widetilde{X})$.

\end{prop}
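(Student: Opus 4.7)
The plan is to apply the dominated convergence theorem to
\begin{equation*}
i_\mu(\widetilde{F};\widetilde{a}_n,\widetilde{a})=\int_{M\setminus\pi(\{\widetilde{a}_n,\widetilde{a}\})}i(\widetilde{F};\widetilde{a}_n,\widetilde{a},z)\,\mathrm{d}\mu(z),
\end{equation*}
which reduces the problem to pointwise convergence of the integrand to zero together with an $n$-independent $\mu$-integrable majorant. Since $\mu$ has no atoms on $\mathrm{Fix}_{\mathrm{Cont},I}(F)$, the single points $\pi(\widetilde{a})$ and $\pi(\widetilde{a}_n)$ are $\mu$-negligible and can be removed from the domain at will.

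For the pointwise step, I would split $\mathrm{Rec}^+(F)$ into fixed and strictly recurrent points. On $\mathrm{Fix}(F)\setminus\{\pi(\widetilde{a})\}$, convergence is exactly the first bullet of Lemma \ref{lem:L1anaz}. For $z\in\mathrm{Rec}^+(F)\setminus\mathrm{Fix}(F)$ with $z\neq\pi(\widetilde{a})$, I would fix an open disk $U\ni z$ whose closure avoids $\pi(\widetilde{a})$, so that $\overline{U}\subset M\setminus\pi(\{\widetilde{a}_n,\widetilde{a}\})$ for all large $n$; the second bullet of Lemma \ref{lem:L1anaz} then yields $L_1(\widetilde{F};\widetilde{a}_n,\widetilde{a},\Phi^j(z))\to 0$ for every $j$, and the Birkhoff ergodic theorem applied to the first-return map $\Phi$ on $U$ (with Kac's lemma handling the denominator $\tau_k(z)$) propagates this to the ratio $L_k/\tau_k$ of Definition \ref{def:Intersection number density}, giving $i(\widetilde{F};\widetilde{a}_n,\widetilde{a},z)\to 0$ almost everywhere.

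For the uniform majorant, I would fix a reference point $\widetilde{c}\in\mathrm{Fix}(\widetilde{F})$ and use Proposition \ref{lem:i is 3coboundary for point} to decompose
\begin{equation*}
i(\widetilde{F};\widetilde{a}_n,\widetilde{a},z)=i(\widetilde{F};\widetilde{c},\widetilde{a},z)-i(\widetilde{F};\widetilde{c},\widetilde{a}_n,z).
\end{equation*}
The first summand is $n$-independent and $\mu$-integrable by the construction of Section \ref{sec:definition of action function}. For the second, Equation \ref{linking number of fixed points} rewrites it on $\mathrm{Fix}_{\mathrm{Cont},I}(F)$ as a finite sum of pairwise linking numbers, each bounded in absolute value by the constant $N_{\widetilde{X}}$ of the B-property; on the rest of $\mathrm{Rec}^+(F)$ the same Birkhoff machinery translates this fixed-point bound into a uniform $L^1$-majorant against the full-support measure $\mu$. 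Dominated convergence then delivers $\lim_{n\to\infty}i_\mu(\widetilde{F};\widetilde{a}_n,\widetilde{a})=0$.

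Under the WB-property, Corollary \ref{clm:I is coboundary} gives $l_\mu(\widetilde{F};\widetilde{a}_n)-l_\mu(\widetilde{F};\widetilde{a})=-i_\mu(\widetilde{F};\widetilde{a}_n,\widetilde{a})\to 0$, yielding the claimed continuity of $l_\mu$ on $\widetilde{X}$; in the $\mu$-Hamiltonian case, Proposition \ref{clm:L is well defined} transfers this to continuity of $L_\mu$ on $\pi(\widetilde{X})$. The hard part will be step two, namely producing the $n$-independent $\mu$-integrable dominant for $|i(\widetilde{F};\widetilde{a}_n,\widetilde{a},\cdot)|$. This is precisely where the hypothesis \emph{B-property on $\widetilde{X}$} (rather than merely WB at $\widetilde{a}$) is essential, since it supplies a linking number bound that is uniform along the whole sequence $\{\widetilde{a}_n\}\subset\widetilde{X}$.
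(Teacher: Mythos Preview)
Your overall architecture is right---dominated convergence driven by Lemma~\ref{lem:L1anaz} for the pointwise part and the B-property for a uniform bound---and this is also how the paper proceeds. But there is a genuine gap in your treatment of points $z\in\mathrm{Rec}^+(F)\setminus\mathrm{Fix}(F)$.

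The step ``$L_1(\widetilde{F};\widetilde{a}_n,\widetilde{a},\Phi^j(z))\to 0$ for every $j$, and the Birkhoff ergodic theorem \ldots propagates this to the ratio $L_k/\tau_k$, giving $i(\widetilde{F};\widetilde{a}_n,\widetilde{a},z)\to 0$'' conflates two different limits. For fixed $n$, Birkhoff gives $i(\widetilde{F};\widetilde{a}_n,\widetilde{a},z)=L^{*}(\widetilde{F};\widetilde{a}_n,\widetilde{a},z)/\tau^{*}(z)$, where $L^{*}$ is the $\Phi$-ergodic average of $L_1$; it says nothing about the behaviour as $n\to\infty$. Knowing $L_1^{(n)}(\Phi^j(z))\to 0$ for each fixed $j$ does not imply that the ergodic average $L^{*,(n)}(z)$ tends to $0$ pointwise---you would need at least a conditional dominated convergence argument, which you do not supply. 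The paper sidesteps this entirely: it never proves pointwise convergence of $i$ on $M\setminus\mathrm{Fix}(F)$. Instead it takes a triangulation $\{U_i\}$ by free disks, uses the uniform essential bound $|i(\widetilde{F};\widetilde{a}_n,\widetilde{a},\cdot)|\leq N$ (a direct consequence of the B-property on $\widetilde{X}$ and full support, see \cite[Prop.~4.6.11]{W2}) to control the tail $\bigcup_{i>N'}U_i$, and on each remaining $U_i$ runs the chain
\[
\int_{\bigcup_{k\geq 0}F^k(U_i)}|i|\,\mathrm{d}\mu=\int_{U_i}\tau\,|i|\,\mathrm{d}\mu=\int_{U_i}\tau^{*}|i|\,\mathrm{d}\mu=\int_{U_i}|L^{*}|\,\mathrm{d}\mu\leq\int_{U_i}|L_1|^{*}\,\mathrm{d}\mu=\int_{U_i}|L_1|\,\mathrm{d}\mu,
\]
so that ordinary dominated convergence (dominant $N\tau\in L^1(U_i)$ by Kac) applied to $L_1(\widetilde{F};\widetilde{a}_n,\widetilde{a},\cdot)\to 0$ finishes the job. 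This is the missing idea.

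A secondary point: your detour through a reference point $\widetilde{c}$ to build the majorant is unnecessary and its last clause (``the same Birkhoff machinery translates this fixed-point bound into a uniform $L^1$-majorant'') is too vague to stand as written. Since both $\widetilde{a}_n$ and $\widetilde{a}$ lie in $\widetilde{X}$, the B-property on $\widetilde{X}$ together with full support already gives $\mathrm{ess\,sup}_{n}|i(\widetilde{F};\widetilde{a}_n,\widetilde{a},\cdot)|\leq N$ directly; no decomposition is needed. Your concluding paragraph on $l_\mu$ and $L_\mu$ is fine.
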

\begin{proof}
There exists a triangulation $\{U_i\}_{i=1}^{+\infty}$ of
$M\setminus\mathrm{Fix}(F)$ such that, for each $i$, the interior of
$U_i$ is an open free disk for $F$ (i.e., $F(\mathrm{Int}(U_i))\cap
\mathrm{Int}(U_i)=\emptyset$) and satisfies $\mu(\partial U_i)=0$.
By a slight abuse of notations we will also write $U_i$ for its
interior.
\smallskip

%
According to Lemma \ref{lem:L1anaz}, we have that
$\lim\limits_{n\rightarrow+\infty}i(\widetilde{F};\widetilde{a}_n,\widetilde{a},z)=0$
for $z\in \mathrm{Fix}(F)\setminus\{\pi(\widetilde{a})\}$, and that
$\lim\limits_{n\rightarrow+\infty}L_1(\widetilde{F};\widetilde{a}_n,\widetilde{a},z)=0$
for $z\in \mathrm{Rec}^+(F)\cap U_i$, for every
$i\in\mathbb{N}$.\smallskip

Choose a compact set $\widetilde{P}\subset\widetilde{M}$ such that
$\widetilde{a}\in\mathrm{Int}(\widetilde{P})$ and
$\{\widetilde{a}_n\}_{n\geq1}\subset\widetilde{P}$. As before, when
$\widetilde{a}\,'$ and $\widetilde{b}\,'$ are two distinct fixed
points of $\widetilde{F}$ in $\widetilde{P}$, we can always suppose
that the path $\widetilde{\gamma}$ that joins $\widetilde{a}\,'$ and
$\widetilde{b}\,'$ is in $\widetilde{P}$. By the definition of
B-property, we may suppose that there exists a number $N>0$ such
that
$$N>\mathrm{ess\,}\sup_{n\geq1}\left\{\,\left|i(\widetilde{F};\widetilde{a}_n,\widetilde{a},z)\right|\,\right\},$$where
``$\mathrm{ess}\,\sup$'' is the essential supremum (see Proposition
4.6.11 in \cite[page 85]{W2}). \smallskip

By Lebesgue's dominating convergence theorem (the dominated function
is $N$), we get
\begin{equation*}\label{eq:the estimation of the fixed point case}
    \lim_{n\rightarrow+\infty}\int_{\mathrm{Fix}(F)}
  \left|(i(\widetilde{F};\widetilde{a}_n,\widetilde{a},z)\right|\,\mathrm{d}\mu=0.
\end{equation*}

It is then sufficient to prove that
\begin{equation*}\label{eq:the estimation of non fixed point case}
    \lim_{n\rightarrow+\infty}\int_{M\setminus\mathrm{Fix}(F)}
  \left|(i(\widetilde{F};\widetilde{a}_n,\widetilde{a},z)\right|\,\mathrm{d}\mu=0.
\end{equation*}

Fix $\epsilon>0$. Since
$\mu(\bigcup_{i=1}^{+\infty}U_i)=\mu(M\setminus\mathrm{Fix}(F))<+\infty$,
there exists a positive integer $N'$ such that
$$\mu(\bigcup_{N'+1}^{+\infty}U_i)<\frac{\epsilon}{2N}.$$

For every pair $(\widetilde{a},\widetilde{b})\in
(\mathrm{Fix}(\widetilde{F})\times\mathrm{Fix}(\widetilde{F}))
\setminus\widetilde{\Delta}$, each $i$, and $\mu$-a.e. $z\in U_i$,
we have the following facts:
\begin{itemize}
  \item $\int_{U_i}\tau\, \mathrm{d}\mu=\mu(\bigcup_{k\geq
0}F^k(U_i))$ (by Kac Lemma \cite{kac}, see \cite[page 55]{W2});
  \item $i(\widetilde{F};\widetilde{a},\widetilde{b},z)$ is the action of
$F$, i.e.,
$i(\widetilde{F};\widetilde{a},\widetilde{b},F(z))=i(\widetilde{F};\widetilde{a},\widetilde{b},z)$.
Indeed, one can consider the point $F(z)\in \mathrm{Rec}^+(F)$ and
the open disk $F(U_i)$. Then this fact follows from Definition
\ref{def:Intersection number density}.
\end{itemize}
Therefore,
\begin{equation}\label{eq: FkUi}
    \int_{\bigcup_{k\geq0}F^k(U_i)}\left|i(\widetilde{F};\widetilde{a},\widetilde{b},z)\right|\,\mathrm{d}\mu \\
 =\int_{U_i}\tau(z)\left|i(\widetilde{F};\widetilde{a},\widetilde{b},z)\right|\,\mathrm{d}\mu.
\end{equation}

As $L_1(\widetilde{F};\widetilde{a},\widetilde{b},z)\in
  L^1(U_i,\mathbb{R},\mu)$ (refer to
Proposition 4.6.10 in \cite[page 84]{W2} for the proof), the
following limit exists
$$L^{*}(\widetilde{F};\widetilde{a},\widetilde{b},z)=\lim_{m\rightarrow+\infty}
\frac{L_m(\widetilde{F};\widetilde{a},\widetilde{b},z)}{m}=\lim_{m\rightarrow+\infty}
\frac{1}{m}\sum_{j=1}^{m-1}L_1(\widetilde{F};\widetilde{a},\widetilde{b},\Phi^j(z)).$$

Moreover, we have the following inequality (modulo subsets of
measure zero of $U_i$)
\begin{eqnarray}\label{ineq:two birkhoff ineqs}
\left|L^{*}(\widetilde{F};\widetilde{a},\widetilde{b},z)\right| &=&
\lim_{m\rightarrow +\infty} \frac{1}{m}\left|\sum_{j=0}^{m-1}
   (L_1(\widetilde{F};\widetilde{a},\widetilde{b},\Phi^j(z))\right|\\
  &\leq& \lim_{m\rightarrow +\infty} \frac{1}{m}\sum_{j=0}^{m-1}\left
  |L_1(\widetilde{F};\widetilde{a},\widetilde{b},\Phi^j(z))\right|\nonumber \\
   &=& \left|L_1(\widetilde{F};\widetilde{a},\widetilde{b},z)\right|^{*},\nonumber
\end{eqnarray}
where the last limit exists due to Birkhoff Ergodic theorem.

Applying Birkhoff Ergodic theorem again, we get
$$\tau^*(\Phi(z))=\tau^*(z)\quad \mathrm{and}\quad
L^*(\widetilde{F};\widetilde{a},\widetilde{b},\Phi(z))=L^*(\widetilde{F};\widetilde{a},\widetilde{b},z),$$
where $\tau^*(z)$ is the limit of the sequence
$\{\tau_n(z)/n\}_{n\geq1}$, and $\Phi$ is the first return map on
$U_i$. For $\mu$-a.e. $z\in U_i$, we have
\begin{equation}\label{eq: i=l divide tau}
   i(\widetilde{F};\widetilde{a},\widetilde{b},z)
=\lim_{m\rightarrow+\infty}\frac{L_m(\widetilde{F};\widetilde{a},\widetilde{b},z)}{\tau_{m}(z)}
=\lim_{m\rightarrow+\infty}\frac{L_m(\widetilde{F};\widetilde{a},\widetilde{b},z)/m}{\tau_{m}(z)/m}
=\frac{L^*(\widetilde{F};\widetilde{a},\widetilde{b},z)}{\tau^*(z)}.
\end{equation}

Therefore,
$i(\widetilde{F};\widetilde{a},\widetilde{b},\Phi(z))=i(\widetilde{F};\widetilde{a},\widetilde{b},z)$.
Moreover, observing that
$\tau(z)|i(\widetilde{F};\widetilde{a},\widetilde{b},z)|$ $\in
L^1(U_i,\mathbb{R},\mu)$, we obtain
\begin{eqnarray*}
&&\lim_{m\rightarrow+\infty}\frac{1}{m}\sum_{j=0}^{m-1}\left(\tau(\Phi^j(z))
\left|i(\widetilde{F};\widetilde{a},\widetilde{b},\Phi^j(z))\right|\right)\\
 &=&\lim_{m\rightarrow+\infty}
\left(\frac{1}{m}\sum_{j=0}^{m-1}\tau(\Phi^j(z))\right)\cdot\left|i(\widetilde{F};\widetilde{a},\widetilde{b},z)\right|\\
 &=&\tau^*(z)\left|i(\widetilde{F};\widetilde{a},\widetilde{b},z)\right|
\end{eqnarray*} for $\mu$-a.e. $z\in U_i$.
This implies that
\begin{equation}\label{eq:an ergodic result of pruduction}
    \int_{U_i}\tau(z)\left|i(\widetilde{F};\widetilde{a},\widetilde{b},z)
\right|\,\mathrm{d}\mu=\int_{U_i}\tau^*(z)\left|i(\widetilde{F};\widetilde{a},\widetilde{b},z)\right|\,\mathrm{d}\mu.
\end{equation}

From the equalities \ref{eq: FkUi}, \ref{eq: i=l divide tau},
\ref{eq:an ergodic result of pruduction} and the inequality
\ref{ineq:two birkhoff ineqs} above, we obtain
\begin{eqnarray*}
\int_{\bigcup\limits_{i=1}^{N'}U_i}
  \left|i(\widetilde{F};\widetilde{a}_n,\widetilde{a},z)\right|\,\mathrm{d}\mu&\leq&
\sum_{i=1}^{N'}\int_{\bigcup_{k\geq0}F^k(U_i)}\left|i(\widetilde{F};\widetilde{a}_n,\widetilde{a},z)\right|\,\mathrm{d}\mu \\
  &=& \sum_{i=1}^{N'}\int_{U_i}\tau(z)\left|i(\widetilde{F};\widetilde{a}_n,\widetilde{a},z)\right|\,\mathrm{d}\mu \\
   &=& \sum_{i=1}^{N'}\int_{U_i}\tau^{*}(z)\left|i(\widetilde{F};\widetilde{a}_n,\widetilde{a},z)\right|\,\mathrm{d}\mu \\
   &=& \sum_{i=1}^{N'}\int_{U_i}\left|L^{*}(\widetilde{F};\widetilde{a}_n,\widetilde{a},z)\right|\,\mathrm{d}\mu  \\
   &\leq& \sum_{i=1}^{N'}\int_{U_i}\left|L_1(\widetilde{F};\widetilde{a}_n,\widetilde{a},z)\right|^{*}\,\mathrm{d}\mu \\
   &=&
   \sum_{i=1}^{N'}\int_{U_i}\left|L_1(\widetilde{F};\widetilde{a}_n,\widetilde{a},z)\right|\,\mathrm{d}\mu.
\end{eqnarray*}

As $N'$ is finite, according to Lebesgue's dominating convergence
theorem (with the dominated function $N\tau(z)$) and Lemma
\ref{lem:L1anaz}, we have
$$\lim_{n\rightarrow+\infty}\sum_{i=1}^{N'}\int_{U_i}
\left|L_1(\widetilde{F};\widetilde{a}_n,\widetilde{a},z)\right|\,\mathrm{d}\mu=0.$$
Therefore, there exists a positive number $N''$ such that when
$n\geq N''$, $$\int_{\bigcup\limits_{i=1}^{\,N'}U_i}
  \left|i(\widetilde{F};\widetilde{a}_n,\widetilde{a},z)\right|\,\mathrm{d}\mu\\
< \frac{\epsilon}{2}.$$ Finally, when $n\geq N''$, we obtain
\begin{eqnarray*}
  &&\int_{M\setminus\mathrm{Fix}(F)}
 \left|(i(\widetilde{F};\widetilde{a}_n,\widetilde{a},z)\right|\,\mathrm{d}\mu\\
   &=& \int_{\bigcup\limits_{i=1}^{N'}U_i}\left|(i(\widetilde{F};\widetilde{a}_n,\widetilde{a},z)\right|\,\mathrm{d}\mu+\int_{\bigcup\limits_{N'+1}^{+\infty}U_i}
\left|(i(\widetilde{F};\widetilde{a}_n,\widetilde{a},z)\right|\,\mathrm{d}\mu\\
   &<& \frac{\epsilon}{2}+\frac{\epsilon}{2N}\cdot
   N\\
   &=&\epsilon.
\end{eqnarray*}
Hence, the first statement holds.\smallskip

If $\rho_{M,I}(\mu)=0$, to see $L_\mu$ is continuous on
$\pi(\widetilde{X})$, let $a\in\pi(\widetilde{X})$ and let
$\{a_n\}_{n\geq1}\subset \pi(\widetilde{X})\setminus\{a\}$ converge
to $a$. By Proposition \ref{clm:L is well defined}, we only need to
consider a lift $\widetilde{a}\in\widetilde{X}$ of $a$ and a lifted
sequence $\{\widetilde{a}_n\}_{n\geq1}\subset \widetilde{X}$ of
$\{a_n\}_{n\geq1}$ that converges to $\widetilde{a}$. Then it
follows from the statement above.
\end{proof}

\section{The proof of Proposition \ref{prop:action is constant
on the path connected components}}\label{sec:action is constant on
the path connected components}

Suppose that $X\subseteq\mathrm{Fix}_{\mathrm{Cont},I}(F)$ is
connected and not reduced to a singleton. By Lemma \ref{lem:fps is
connected and B property}, $I$ satisfies the B-property on
$\pi^{-1}(X)$. If $I$ satisfies the hypotheses of Theorem
\ref{thm:PW}, according to Proposition \ref{prop:the continuity of
lmu}, the action function $L_{\mu}$ is continuous on $X$. In fact,
we have the following stronger result: \bigskip

\newenvironment{prop03}{\noindent\textbf{Proposition \ref{prop:action is constant on the path connected
components}}~\itshape}{\par}

\begin{prop03} Under the hypotheses of Theorem \ref{thm:PW}, for
every two distinct contractible fixed points $a$ and $b$ of $F$
which belong to a same connected component of
$\mathrm{Fix}_{\mathrm{Cont},I}(F)$, we have
$I_{\mu}(\widetilde{F};a,b)=0$. As a conclusion, the action function
$L_{\mu}$ is a constant on each connected component of
$\mathrm{Fix}_{\mathrm{Cont},I}(F)$.
\end{prop03}\smallskip

Given $Y\subset M$ and $\epsilon>0$, let $Y_\epsilon=\{z\in M\mid
d(z,y)<\epsilon,y\in Y\}$ be the $\epsilon$-neighborhood of $Y$. If
$N$ is a submanifold of $M$, the inclusion $i: N\hookrightarrow M$
naturally induces a homomorphism: $i_*: \pi_1(N,p)\rightarrow
\pi_1(M,p)$, where $p\in N$. To prove Proposition \ref{prop:action
is constant on the path connected components}, we need the following
 topological lemma that will be proved in Appendix. The
reader may find a similar version of Alexander-Spanier (co-)homology
of this lemma (see \cite{Sp81}). The author thanks Le Calvez for the
proof.

\begin{lem}\label{lem:topology lemma of connected set}If $Z$ is a connected compact subset of
$M$ and $z\in Z$, then there is $\epsilon_0>0$ such that
\begin{equation*}
i_*(\pi_1(Z_\epsilon,z))=i_*(\pi_1(Z_{\epsilon_0},z))\quad \text{for
all } 0<\epsilon< \epsilon_0.
\end{equation*}
\end{lem}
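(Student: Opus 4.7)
The plan is to prove stabilization of the decreasing family of subgroups $H_\epsilon := i_*(\pi_1(Z_\epsilon,z))\subseteq\pi_1(M,z)$ by reducing to a finite combinatorial argument via a triangulation of $M$. The monotonicity $H_{\epsilon_1}\subseteq H_{\epsilon_2}$ for $\epsilon_1\le\epsilon_2$ is immediate from $Z_{\epsilon_1}\subseteq Z_{\epsilon_2}$, so the task is to show that this decreasing chain of subgroups terminates.

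I would fix a triangulation $T$ of $M$ (which exists since $M$ is a compact smooth surface) with a sufficiently small prescribed mesh. For each $\epsilon>0$, let $K_\epsilon$ denote the union of the closed simplices of $T$ that intersect $Z_\epsilon$. Then $K_\epsilon$ is a finite subcomplex of $T$ containing $Z_\epsilon$, and the assignment $\epsilon\mapsto K_\epsilon$ is monotone decreasing. Since $T$ has only finitely many simplices, the family $\{K_\epsilon\}$ takes only finitely many values, so there exists $\epsilon_0>0$ and a fixed subcomplex $K$ of $T$ with $K_\epsilon=K$ for every $0<\epsilon\le\epsilon_0$. Moreover, by shrinking $\epsilon_0$ if necessary, every simplex of $K$ meets $Z$ itself, because any simplex disjoint from $Z$ lies at positive distance from the compact set $Z$ and is therefore excluded from $K_\epsilon$ for small enough $\epsilon$.

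The remaining step is to show that $H_\epsilon=H_{\epsilon_0}$ for all $0<\epsilon\le\epsilon_0$. One inclusion is automatic. For the reverse, given a loop $\gamma$ in $Z_{\epsilon_0}\subseteq K$ based at $z$, I would produce a loop in $Z_\epsilon$ based at $z$ that is homotopic to $\gamma$ in $M$ relative to $z$. The idea is to pass to a sufficiently fine barycentric subdivision of $T$ so that every simplex of $K$ has diameter less than $\epsilon/2$; combined with the property that every simplex of $K$ meets $Z$, this yields $K\subseteq Z_\epsilon$. Then one subdivides $\gamma$ into short arcs each supported in a single simplex of $K$, choosing the breakpoints to lie in $Z$ itself (possible since every simplex of $K$ meets $Z$), and deforms each arc through a local null-homotopy in $M$ into an arc in $Z_\epsilon$ with the same endpoints. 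Concatenating these local pushes gives a loop in $Z_\epsilon$ homotopic to $\gamma$ in $M$ relative to $z$, showing $[\gamma]\in H_\epsilon$ and hence $H_{\epsilon_0}\subseteq H_\epsilon$.

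The main obstacle is coordinating the two refinement processes---shrinking $\epsilon$ and refining the triangulation---so that a single $\epsilon_0$ works uniformly, and carrying out the local-to-global deformation rigorously with matching endpoints between consecutive local pushes. This rests on the local contractibility of $M$ as an ANR (true because $M$ is a manifold) and on the finiteness of the simplices of $K$. This argument is precisely the fundamental-group analogue of the Alexander--Spanier direct-limit stabilization for cohomology alluded to in the hint preceding the lemma; the extra care needed here stems from the non-abelian nature of $\pi_1$ and the need to track the basepoint $z$.
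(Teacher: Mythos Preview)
Your argument has a genuine gap at the refinement step. You fix a triangulation $T$, obtain the stable subcomplex $K$ (every coarse simplex of $K$ meets $Z$), and then for a given small $\epsilon$ propose to barycentrically subdivide $T$ until the simplices have diameter $<\epsilon/2$, concluding $K\subseteq Z_\epsilon$. But subdividing $T$ does not change $K$ as a point set; it only retriangulates it. The property ``every simplex of $K$ meets $Z$'' was established for the coarse simplices and is destroyed by subdivision: a large simplex touching $Z$ at a single point breaks into many small simplices, most of which miss $Z$ entirely. Hence the inclusion $K\subseteq Z_\epsilon$ fails as soon as $\epsilon$ drops below the mesh of the original $T$. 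If instead you redefine $K$ after refining, you obtain a strictly smaller complex $K'$, and your loop $\gamma$ --- which lies in $Z_{\epsilon_0}\subseteq K$, not in $K'$ --- must still be homotoped into $K'$; that is exactly the original problem, so the argument is circular. You correctly flag this as ``the main obstacle,'' but the proposal does not resolve it, and there is no abstract descending-chain principle to fall back on: surface groups contain infinite strictly decreasing chains of subgroups, so stabilization of the $H_\epsilon$ is not automatic from finite generation of $\pi_1(M)$.

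The paper's proof avoids this by first stabilizing in homology and then using the two-dimensionality of $M$ to lift to $\pi_1$. The images $i_*(H_1(Z_\epsilon,\mathbb{Z}))$ and $i_*(H_1(\mathrm{Cl}(M\setminus Z_\epsilon),\mathbb{Z}))$ in the finitely generated abelian group $H_1(M,\mathbb{Z})$ stabilize trivially, and the number of positive-genus components of $\mathrm{Cl}(M\setminus Z_\epsilon)$ is bounded by $g$ and hence also stabilizes. A separate surface lemma then shows that if two nested open connected subsurfaces $S'\subset S\subset M$ have the same $H_1$-image in $M$ and the same number of positive-genus complementary components, they already have the same $\pi_1$-image: under these hypotheses each component of $\mathrm{Cl}(S\setminus S')$ is planar and meets $\partial S'$ in a single circle, with its remaining boundary circles bounding disks in $M\setminus S$, so every arc in it with endpoints on that circle can be pushed back onto $\partial S'$ within $M$. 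This geometric reduction from $\pi_1$ to $H_1$ is precisely what is missing from your triangulation approach.
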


\begin{proof}[Proof of Proposition \ref{prop:action is constant on the path connected
components}] Let $X$ be a connected component of
$\mathrm{Fix}_{\mathrm{Cont},I}(F)$ that is not a singleton. We will
first consider the linking number
$i(\widetilde{F};\widetilde{a},\widetilde{b},z)$ in the case where
$z\in \mathrm{Rec}^+(F)\setminus \mathrm{Fix}_{\mathrm{Cont},I}(F)$
and $(\widetilde{a},\widetilde{b})\in(\pi^{-1}(X)
\times\pi^{-1}(X))\setminus\widetilde{\Delta}$.

Recall the following functions  defined in Section \ref{subsec:the
definition of a new linking number}:
$$L_{k}: ((\mathrm{Fix}(\widetilde{F})\times
\mathrm{Fix}(\widetilde{F}))\setminus\widetilde{\Delta})\times
(\mathrm{Rec}^+(F)\cap U)\rightarrow \mathbb{Z},$$
\begin{equation*}
    L_{k}(\widetilde{F};\widetilde{c}_1,\widetilde{c}_2,z)=\widetilde{\gamma}\wedge
\widetilde{\Gamma}^k_{\widetilde{I}_1,z}=\sum\limits_{j=0}^{k-1}
L_1(\widetilde{F};\widetilde{c}_1,\widetilde{c}_2,\Phi^j(z)),
\end{equation*}
where $z\in \mathrm{Rec}^+(F)$ , $U\subset
M\setminus\pi(\{\widetilde{c}_1,\widetilde{c}_2\})$ is an open disk
containing $z$, and $\widetilde{I}_1$ is an isotopy from
$\mathrm{Id}_{\widetilde{M}}$ to $\widetilde{F}$ that fixes
$\widetilde{c}_1$ and $\widetilde{c}_2$.

We claim that, for every $z\in \mathrm{Rec}^+(F)\setminus
\mathrm{Fix}_{\mathrm{Cont},I}(F)$ and $k\geq1$, there exists
$\epsilon>0$ which merely depends on $z$ and $k$ such that
$L_{k}(\widetilde{F};\widetilde{a},\widetilde{b},z)=0$ when
$\widetilde{d}(\widetilde{a},\widetilde{b})<\epsilon$.

Indeed, since $X$ is compact, $z\not\in X$, and
$\widetilde{F}_t\circ T=T\circ\widetilde{F}_t$ for any $T\in G$, the
value
\begin{equation*}\label{formula:c'}
    c'=\min_{t\in[0,1],\ \widetilde{z}\in
C_z^k,\ \widetilde{z}'\in
\pi^{-1}(X)}|\widetilde{F}_t(\widetilde{z})-\widetilde{F}_t(\widetilde{z}')|
\end{equation*}
is positive and only depends on $z$ and $k$, where
$C_z^k=\pi^{-1}(\{z,F(z),\cdots,F^{\tau_k(z)-1}(z)\})$.

Recall that the isotopy
\begin{equation*}\label{formula:isotopies}
   \widetilde{I}'(\widetilde{z})(t)=\frac{\widetilde{b}-\widetilde{a}}{\widetilde{F}_t
   (\widetilde{b})-\widetilde{F}_t(\widetilde{a})}\cdot(\widetilde{F}_t(\widetilde{z})-
   \widetilde{F}_t(\widetilde{a}))
   +\widetilde{a}
\end{equation*}
fixes $\widetilde{a}$, $\widetilde{b}$ and $\infty$. Let
$\epsilon>0$ be small enough such that
$\max_{t\in[0,1]}|\widetilde{F}_t(\widetilde{a})-\widetilde{F}_t(\widetilde{b})|<c'/3$
when $\widetilde{d}(\widetilde{a},\widetilde{b})<\epsilon$, and let
$\widetilde{V}'$ be a disk whose center is $\widetilde{a}$ and
radius is $2|\widetilde{b}-\widetilde{a}|$. The claim follows from
the proof of Lemma \ref{lem:L1anaz} if one replaces
$\widetilde{I}_n$ in Formula \ref{formula:isotopy} by
$\widetilde{I}'$, $\widetilde{V}_n$ by $\widetilde{V}'$, and $c$ in
Formula \ref{formula:c} by $c'$.\smallskip

Fix $x\in X$ and a lift $\widetilde{x} \in \widetilde{M}$ of $x$. By
Lemma \ref{lem:topology lemma of connected set}, there is
$\epsilon_0>0$ such that for all $0<\epsilon< \epsilon_0$,
$$i_*(\pi_1(X_\epsilon,x))=i_*(\pi_1(X_{\epsilon_0},x)).$$ Suppose that $\widetilde{X}_\epsilon$ is
the connected component of $\pi^{-1}(X_\epsilon)$ that contains
$\widetilde{x}$. Let $G_{\widetilde{X}_{\epsilon}}$ be the
stabilizer of $\widetilde{X}_\epsilon$ in the group $G$, i.e.,
$G_{\widetilde{X}_{\epsilon}}=\{T\in G\mid
T(\widetilde{X}_\epsilon)=\widetilde{X}_\epsilon\}$. It is clear
that $i_*(\pi_1(X_\epsilon,x))\simeq G_{\widetilde{X}_{\epsilon}}$.
Hence
$G_{\widetilde{X}_{\epsilon_1}}=G_{\widetilde{X}_{\epsilon_2}}$ for
all $0<\epsilon_2<\epsilon_1\leq\epsilon_0$. Let
$\widetilde{Y}_\epsilon=\widetilde{X}_\epsilon\cap\pi^{-1}(X)$.
Recall that $X$ is connected. We have
$\pi(\widetilde{Y}_\epsilon)=X$ for all $0<\epsilon\leq\epsilon_0$
since $X_\epsilon$ is path connected. Note that
$\widetilde{Y}_\epsilon$ is $4\epsilon$-\emph{chain connected},
i.e., for any
$\widetilde{y},\widetilde{y}\,'\in\widetilde{Y}_\epsilon$ there
exists a sequence $\{\widetilde{y}_i\}_{i=1}^n\subset
\widetilde{Y}_\epsilon$ such that $\widetilde{y}_1=\widetilde{y}$,
$\widetilde{y}_n=\widetilde{y}\,'$, and
$\widetilde{d}(\widetilde{y}_i,\widetilde{y}_{i+1})<4\epsilon$.
Indeed, we can find a path $\gamma$ in $X_\epsilon$ from
$\pi(\widetilde{y})$ to $\pi(\widetilde{y}\,')$ and a lift
$\widetilde{\gamma}$ of $\gamma$ in $\widetilde{X}_\epsilon$ from
$\widetilde{y}$ to $\widetilde{y}\,'$. On the path
$\widetilde{\gamma}$, we choose a sequence
$\{\widetilde{x}_i\}_{i=1}^n\subset \widetilde{\gamma}$ such that
$\widetilde{x}_1=\widetilde{y}$, $\widetilde{x}_n=\widetilde{y}\,'$,
and the disks $\{\widetilde{D}(\widetilde{x}_i,\epsilon)\}_{i=1}^n$
cover $\widetilde{\gamma}$ with
$\widetilde{D}(\widetilde{x}_i,\epsilon)\cap\widetilde{D}(\widetilde{x}_{i+1},\epsilon)\neq
\emptyset$ for all $i=1,\ldots,n-1$, where
$\widetilde{D}(\widetilde{x}_i,\epsilon)$ is a disk on
$\widetilde{M}$ whose center is $\widetilde{x}_i$ and radius is
$\epsilon$. Choose a sequence $\{\widetilde{y}_i\}_{i=1}^{n}\subset
\widetilde{Y}_\epsilon$ such that $\widetilde{y}_1=\widetilde{y}$,
$\widetilde{y}_n=\widetilde{y}\,'$, and $\widetilde{y}_i\in
\widetilde{D}(\widetilde{x}_i,\epsilon)\cap \widetilde{Y}_\epsilon$
for $2\leq i\leq n-1$. Obviously, $\{\widetilde{y}_i\}_{i=1}^{n}$ is
a $4\epsilon$-chain in $\widetilde{Y}_\epsilon$ from $\widetilde{y}$
to $\widetilde{y}\,'$ by the triangle inequality.

For any $y\in X$, we claim that
$\widetilde{y}\in\widetilde{Y}_\epsilon$ for all $\widetilde{y}\in
\pi^{-1}(y)\cap\widetilde{Y}_{\epsilon_0}$ and all
$0<\epsilon\leq\epsilon_0$. Otherwise, there is
$0<\epsilon_1<\epsilon_0$ and
$\widetilde{y}\in\widetilde{Y}_{\epsilon_0}\subset
\widetilde{X}_{\epsilon_0}$ such that
$\widetilde{y}\not\in\widetilde{Y}_{\epsilon_1}$, and hence
$\widetilde{y}\not\in\widetilde{X}_{\epsilon_1}$. However, there is
a lift $\widetilde{y}\,'$ of $y$ such that
$\widetilde{y}\,'\in\widetilde{Y}_{\epsilon_1}\subset
\widetilde{X}_{\epsilon_1}\subset \widetilde{X}_{\epsilon_0}$. On
the one hand, $T\in G_{\widetilde{X}_{\epsilon_0}}$ since
$\widetilde{y},\widetilde{y}\,'\in \widetilde{X}_{\epsilon_0}$,
where $\widetilde{y}=T(\widetilde{y}\,')$. On the other hand,
$T\not\in G_{\widetilde{X}_{\epsilon_1}}$ since
$\widetilde{y}\not\in\widetilde{X}_{\epsilon_1}$. This is impossible
because
$G_{\widetilde{X}_{\epsilon_1}}=G_{\widetilde{X}_{\epsilon_0}}$ and
hence the claim holds. This implies that
$\widetilde{Y}_{\epsilon}=\widetilde{Y}_{\epsilon_0}$ for all
$0<\epsilon<\epsilon_0$, and thereby $\widetilde{Y}_{\epsilon_0}$ is
$\epsilon$-chain connected for all $0<\epsilon\leq
\epsilon_0/4$.\smallskip

Recall the equality in Proposition \ref{lem:i is 3coboundary for
point} for any distinct points $\widetilde{c}_1,\widetilde{c}_2$ and
$\widetilde{c}_3$ of $\mathrm{Fix}(\widetilde{F})$:
\begin{equation}\label{Eq:formula 4.3}
    L_{k}(\widetilde{F};\widetilde{c}_1,\widetilde{c}_2,z)+L_{k}(\widetilde{F};\widetilde{c}_2,
\widetilde{c}_3,z)+L_{k}(\widetilde{F};\widetilde{c}_3,\widetilde{c}_1,z)=0.
\end{equation}

Applying Equality \ref{Eq:formula 4.3}, we get that, for all
distinct points $\widetilde{a},\widetilde{b}\in
\widetilde{Y}_{\epsilon_0}$,
$L_k(\widetilde{F};\widetilde{a},\widetilde{b},z)=0$ for all $k$ and
$z\in \mathrm{Rec}^+(F)\setminus \mathrm{Fix}_{\mathrm{Cont},I}(F)$.
This implies that $i(\widetilde{F};\widetilde{a},\widetilde{b},z)=0$
for all $(\widetilde{a},\widetilde{b})\in
(\widetilde{Y}_{\epsilon_0}\times\widetilde{Y}_{\epsilon_0})\setminus\widetilde{\Delta}$
and $z\in \mathrm{Rec}^+(F)\setminus
\mathrm{Fix}_{\mathrm{Cont},I}(F)$.\smallskip

Let us now consider the case  $z\in
\mathrm{Fix}_{\mathrm{Cont},I}(F)$ to finish our proof, which is in
turn divided into two cases:
\begin{enumerate}
\item there is a set $\widetilde{X}$ on $\widetilde{M}$ which is a connected component of
$\pi^{-1}(X)$ and satisfies that the covering map
$\pi:\widetilde{X}\rightarrow X$ is surjective (this case contains
the case where $X$ is path connected);
\item there is no such set satisfying Item 1.
\end{enumerate}\smallskip

Recall the linking number of $z$ for $\widetilde{a}$ and
$\widetilde{b}$ (see Formula \ref{linking number of fixed points}):
$$i(\widetilde{F};\widetilde{a},\widetilde{b},z)=\sum_{\pi(\widetilde{z})=z}
\left(i(\widetilde{F};\widetilde{a},\widetilde{z})-i(\widetilde{F};\widetilde{b},\widetilde{z})\right),
$$
where
$(\widetilde{a},\widetilde{b})\in(\mathrm{Fix}(\widetilde{F})\times\mathrm{Fix}(\widetilde{F}))\setminus\widetilde{\Delta}$
and
$i(\widetilde{F};\widetilde{c},\widetilde{z})=i_{\widetilde{I}}(\widetilde{c},\widetilde{z})$
(see Formula \ref{eq:linking number for two fixed points}).

In the first case, for any $\widetilde{z}\in\pi^{-1}(z)$, by Lemma
\ref{rem:linking number on connected set},
$i(\widetilde{F};\widetilde{z}\,',\widetilde{z})$ is a constant
(which depends on $\widetilde{z}$) for all
$\widetilde{z}\,'\in\widetilde{X}\setminus\{\widetilde{z}\}$. We get
that $i(\widetilde{F};\widetilde{a},\widetilde{b},z)=0$ for any
$(\widetilde{a},\widetilde{b})\in(\widetilde{X}\times\widetilde{X})\setminus\widetilde{\Delta}$
and $z\in \mathrm{Fix}_{\mathrm{Cont},I}(F)\setminus
\pi(\{\widetilde{a},\widetilde{b}\})$.

Note that $\widetilde{Y}_{\epsilon_0}=\widetilde{X}$ in this case.
Therefore, by the definition of the action function, we get that
$i_{\mu}(\widetilde{F};\widetilde{a},\widetilde{b})=0$ for all
$(\widetilde{a},\widetilde{b})\in(\widetilde{X}\times\widetilde{X})\setminus\widetilde{\Delta}$.
The conclusion follows from the fact that $\pi(\widetilde{X})=X$ and
the hypothesis that $\rho_{M,I}(\mu)=0$ in this case.
\smallskip

In the second case, write $\pi^{-1}(X)$ as
$\bigsqcup_{\alpha\in\widetilde{\Lambda}}\widetilde{X}_{\alpha}$
where $\widetilde{X}_{\alpha}$ is a connected component of
$\pi^{-1}(X)$ on $\widetilde{M}$. Note that
$2\leq\sharp\widetilde{\Lambda}\leq+\infty$. 
It is easy to see that every such $\widetilde{X}_{\alpha}$ is
unbounded on $\widetilde{M}$ by the hypotheses and the connectedness
of $X$.

Similar to the proof of the first case, for every
$\alpha\in\widetilde{\Lambda}$ and
$\widetilde{c}\in\widetilde{X}_\beta$ with $\alpha\neq\beta$, the
following property holds: when $z\in
\mathrm{Fix}_{\mathrm{Cont},I}(F)$, the linking number
$i(\widetilde{F};\cdot,\widetilde{c},z)\in\mathbb{Z}$ is a constant
on $\widetilde{X}_\alpha$, and hence
$i(\widetilde{F};\widetilde{a},\widetilde{b},z)=0$ for all
$(\widetilde{a},\widetilde{b})\in(\widetilde{X}_\alpha\times\widetilde{X}_\alpha)\setminus\widetilde{\Delta}$.
Observing that every $\widetilde{X}_\alpha$ is unbounded, we have
that the constant is zero by Formula \ref{linking number of fixed
points} and Lemma \ref{lem:the linking number on a unbounded set}.
Therefore, $i(\widetilde{F};\widetilde{a},\widetilde{b},z)=0$ for
all
$(\widetilde{a},\widetilde{b})\in(\pi^{-1}(X)\times\pi^{-1}(X))\setminus\widetilde{\Delta}$.

Finally, by the definition of the action function, we get that
$i_{\mu}(\widetilde{F};\widetilde{a},\widetilde{b})=0$ for all
$(\widetilde{a},\widetilde{b})\in(\widetilde{Y}_{\epsilon_0}\times\widetilde{Y}_{\epsilon_0})\setminus\widetilde{\Delta}$.
The conclusion then follows from the facts that
$\pi(\widetilde{Y}_{\epsilon_0})=X$ and that $\rho_{M,I}(\mu)=0$ in
the second case.
\end{proof}\bigskip

\section{The proof of Theorem \ref{prop:F is not constant if the
contractible fixed points is finite}}\label{sec:F is not constant if
the contractible fixed points is finite}

To prove Theorem \ref{prop:F is not constant if the contractible
fixed points is finite}, we need the following theorem:
\begin{thm}[\cite{P1,Ma}]\label{thm:hamiltonian map has at least 3 contricible fixed
points} Let $M$ be a closed oriented surface with genus $g\geq1$. If
$F$ is the time-one map of a $\mu$-Hamiltonian isotopy $I$ on $M$,
then there exist at least three contractible fixed points of $F$.
\end{thm}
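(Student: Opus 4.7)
The plan is to follow Le Calvez's equivariant Brouwer and transverse foliation machinery \cite{P1}, which is already in use in this paper, with Matsumoto's $C^0$-approximation \cite{Ma} as a backup strategy. I will sketch the former, since it dovetails with the tools developed in earlier sections.

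First, I would apply Le Calvez's (or Jaulent's) maximal identity isotopy theorem to produce an identity isotopy $I'$, homotopic to $I$ with fixed endpoints, together with a transverse oriented singular foliation $\mathcal{F}$ on $M$ whose singular set coincides with the fixed-point set of $I'$, and such that every trajectory $I'(z)$ is positively transverse to $\mathcal{F}$. By maximality, every contractible fixed point of $F$ is a singularity of $\mathcal{F}$, and moreover the number of singularities of $\mathcal{F}$ is at most $\sharp\mathrm{Fix}_{\mathrm{Cont},I}(F)$, so it suffices to show that $\mathcal{F}$ has at least three singularities.

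Next, I would run a Poincar\'e--Hopf-type index count for $\mathcal{F}$:
\[
\sum_{s\in\mathrm{sing}(\mathcal{F})}\mathrm{ind}_{\mathcal{F}}(s)\;=\;\chi(M)\;=\;2-2g\;\leq\;0.
\]
Suppose for contradiction that $\sharp\mathrm{sing}(\mathcal{F})\leq 2$. Then either $\mathcal{F}$ has a single singularity of index $2-2g$, or two singularities whose indices sum to $2-2g$. In each case I would derive a contradiction from two inputs: (i) the hypothesis $\rho_{M,I}(\mu)=0$ together with the full support of $\mu$, which via Birkhoff's ergodic theorem spreads recurrent behavior over all of $M$; and (ii) a local rotation-number estimate at each singularity, showing that any singularity of index $\leq -1$, or any \emph{rotational} singularity, forces the lifted dynamics on $\widetilde{M}$ to display a Brouwer-type translation structure incompatible with vanishing rotation vector.

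The main obstacle is the borderline case of exactly two singularities on the torus ($g=1$, $\chi=0$), where the two indices can sum to zero (typically $+1$ and $-1$) without an immediate contradiction. I would handle this by passing to the universal cover $\mathbb{R}^{2}$ and applying the equivariant Brouwer translation theorem to $\widetilde{F}$: the lift has only two $\mathbb{Z}^{2}$-orbits of fixed points, and the free Brouwer disks attached to a saddle-type singularity would produce a $\widetilde{F}$-invariant open set on which the displacement sequence grows linearly in some direction, contradicting $\rho_{\mathbb{T}^{2},I}(\mu)=0$. This torus case is the delicate technical step; Matsumoto's alternative route resolves it by smoothing $F$ to Hamiltonian diffeomorphisms $F_{n}\to F$, applying the smooth Arnold bound (using that the cup-length of $M$ is at least $3$) to each $F_{n}$, and then using $C^{0}$-continuity of fixed-point indices to ensure that the three resulting sequences of contractible fixed points do not all collapse to fewer than three distinct limits in $\mathrm{Fix}_{\mathrm{Cont},I}(F)$.
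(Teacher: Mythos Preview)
The paper does not prove this theorem; it is quoted from \cite{P1,Ma} and used as a black box. The paper does, however, later invoke the internal structure of Le Calvez's proof (Theorem~10.1 and Proposition~10.4 of \cite{P1}), so that is the argument to compare against.

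Your framework---maximal isotopy, transverse singular foliation $\mathcal{F}$ with $\mathrm{sing}(\mathcal{F})\subset\mathrm{Fix}_{\mathrm{Cont},I}(F)$---is exactly Le Calvez's, but the way you try to close the argument has a genuine gap. Poincar\'e--Hopf by itself does not rule out one or two singularities: on $\mathbb{T}^2$ two singularities of indices $+1,-1$, and on $\Sigma_g$ ($g\ge 2$) a single singularity of index $2-2g$, are consistent both with $\sum\mathrm{ind}=\chi(M)$ and with the bound $\mathrm{ind}\le 1$. Your assertion that an index $\le -1$ singularity ``forces a Brouwer translation structure incompatible with $\rho=0$'' is not an argument; saddles are perfectly compatible with vanishing rotation vector (think of the gradient of a Morse function on $\mathbb{T}^2$). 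The step that actually does the work in \cite{P1} is dynamical, not index-theoretic: using full support of $\mu$ and $\rho_{M,I}(\mu)=0$, Le Calvez shows that $\mathcal{F}$ has no closed leaf and is gradient-like, and that for a leaf $\lambda$ through a positively recurrent point one has $\alpha(\lambda)\ne\omega(\lambda)$, both singletons in $X$ (his Proposition~10.4, which this very paper cites in Section~\ref{sec:F is not constant if the contractible fixed points is finite}). From gradient-likeness one obtains at least one sink and at least one source of $\mathcal{F}$, each of index $+1$; since $1+1=2>2-2g=\chi(M)$ for every $g\ge 1$, Poincar\'e--Hopf \emph{then} forces a third singularity. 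It is this ``a sink and a source exist'' step, driven by measure preservation and $\rho=0$, that your sketch replaces with a vague rotation-number heuristic. Your smoothing-based backup is also not Matsumoto's method, and as you yourself note it runs into the unresolved problem of three fixed points collapsing in the $C^0$ limit.
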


Remark that Theorem \ref{thm:hamiltonian map has at least 3
contricible fixed points} is not valid when the measure has no full
support (see Example \ref{exem:T2 and supp(u)notM} and Example
\ref{exem:M and supp(u)notM} below).\smallskip

\newenvironment{prop02}{\noindent\textbf{Theorem \ref{prop:F is not
constant if the contractible fixed points is
finite}}~\itshape}{\par}
\begin{prop02}Let $F$ be the
time-one map of a $\mu$-Hamiltonian isotopy $I$ on a closed oriented
surface $M$ with $g\geq 1$. If $I$ satisfies the WB-property and $F$
is not $\mathrm{Id}_{M}$, the action function $L_\mu$ is not
constant.
\end{prop02}

Theorem \ref{prop:F is not constant if the contractible fixed points
is finite} is proved in two cases: the set
$\mathrm{Fix}_{\mathrm{Cont},I}(F)$ is finite or infinite.

\begin{proof}[Proof of Theorem \ref{prop:F is not
constant if the contractible fixed points is finite} for the case
$\sharp\mathrm{Fix}_{\mathrm{Cont},I}(F)<+\infty$]\qquad\par

We say that $X\subseteq \mathrm{Fix}_{\mathrm{Cont},I}(F)$ is
\emph{unlinked}
 if there exists an isotopy $I'=(F'_t)_{t\in [0,1]}$ homotopic
to $I$ which fixes every point of $X$. Moreover, we say that $X$ is
a \emph{maximal unlinked set} if any set $X'\subseteq
\mathrm{Fix}_{\mathrm{Cont},I}(F)$ that strictly contains $X$ is not
unlinked.\smallskip

In the proof of Theorem \ref{thm:hamiltonian map has at least 3
contricible fixed points} (\cite[Theorem 10.1]{P1}), Le Calvez has
proved that there exists a maximal unlinked set $X\subseteq
\mathrm{Fix}_{\mathrm{Cont},I}(F)$ with $\sharp X\geq 3$ if
$\sharp\mathrm{Fix}_{\mathrm{Cont},I}(F)<+\infty$.\smallskip

There exists an oriented topological foliation $\mathcal{F}$ on
$M\setminus X$ (or, equivalently, a singular oriented foliation
$\mathcal{F}$ on $M$ with $X$ equal to the singular set) such that,
for all $z\in M\setminus X$, the trajectory $I(z)$ is homotopic to
an arc $\gamma$ joining $z$ and $F(z)$ in $M\setminus X$, which is
positively transverse to $\mathcal{F}$. It means that for every
$t_0\in[0,1]$ there exists an open neighborhood $V\subset M\setminus
X$ of $\gamma(t_0)$ and an orientation preserving homeomorphism
$h:V\rightarrow(-1,1)^2$ which sends the foliation $\mathcal{F}$ on
the horizontal foliation (oriented with $x_1$ increasing) such that
the map $t\mapsto p_2(h(\gamma(t)))$ defined in a neighborhood of
$t_0$ is strictly increasing, where $p_2(x_1,x_2)=x_2$.\smallskip

We can choose a point $z\in
\mathrm{Rec}^+(F)\setminus\mathrm{Fix}(F)$ and a leaf $\lambda$
containing $z$. Proposition 10.4 in \cite{P1} states that the
$\omega$-limit set $\omega(\lambda)\in X$, the $\alpha$-limit set
$\alpha(\lambda)\in X$, and $\omega(\lambda)\neq\alpha(\lambda)$.
Fix an isotopy $I'$ homotopic to $I$ that fixes $\omega(\lambda)$
and $\alpha(\lambda)$ and a lift $\widetilde{\lambda}$ of $\lambda$
that joins $\widetilde{\omega(\lambda)}$ and
$\widetilde{\alpha(\lambda)}$. Let us now study the linking number
$i(\widetilde{F};\widetilde{\omega(\lambda)},\widetilde{\alpha(\lambda)},z')$
for $z'\in \mathrm{Rec}^+(F)\setminus X$ when it exists. Observing
that for all $z'\in M\setminus X$, the trajectory $I'(z')$ is still
homotopic to an arc that is positively transverse to $\mathcal{F}$.
Hence, for all $z'\in \mathrm{Rec}^+(F)\setminus X$, without loss of
generality, we can choose an open disk $U$ containing $z'$ such that
$U\cap \lambda=\emptyset$ by shrinking $U$ and perturbing $\lambda$
if necessary. Then we get
\begin{equation*}\label{eq: Ln is not negtive}
    L_n(\widetilde{F};\widetilde{\omega(\lambda)},\widetilde{\alpha(\lambda)},z')
=\widetilde{\lambda}\wedge\widetilde{\Gamma}^n_{\widetilde{I}\,',z'}=
\lambda\wedge\Gamma^n_{I\,',z'}\geq0
\end{equation*}
for every $n\geq1$, where $\widetilde{I}\,'$ is the lift of $I'$ to
$\widetilde{M}$ and
$\Gamma^n_{I\,',z'}=\pi(\widetilde{\Gamma}^n_{\widetilde{I}\,',z'})$.

According to Definition \ref{def:Intersection number density}, we
have
$$i(\widetilde{F};\widetilde{\omega(\lambda)},\widetilde{\alpha(\lambda)},z')\geq0$$
for $\mu$-a.e.
$z'\in\mathrm{Rec}^+(F)\setminus\{\omega(\lambda),\alpha(\lambda)\}$.\smallskip

By the continuity of $I'$ and the hypothesis on $\mu$, there exists
an open free disk $U$ containing $z$ such that $\mu(U)>0$ and
$L_1(\widetilde{F};\widetilde{\omega(\lambda)},\widetilde{\alpha(\lambda)},z')>0$
when $z'\in U\cap\mathrm{Rec}^+(F)$.

Similarly to the proof of Proposition \ref{prop:the continuity of
lmu}, we obtain
\begin{eqnarray*}
I_\mu(\widetilde{F};\omega(\lambda),\alpha(\lambda))&\geq&
\int_{\bigcup_{k\geq0}F^k(U)}i(\widetilde{F};\widetilde{\omega(\lambda)},\widetilde{\alpha(\lambda)},z)\,\mathrm{d}\mu \\
  &=& \int_{U}\tau(z)i(\widetilde{F};\widetilde{\omega(\lambda)},\widetilde{\alpha(\lambda)},z)\,\mathrm{d}\mu \\
   &=&\int_{U}\tau^{*}(z)i(\widetilde{F};\widetilde{\omega(\lambda)},\widetilde{\alpha(\lambda)},z)\,\mathrm{d}\mu \\
   &=&\int_{U}L^{*}(\widetilde{F};\widetilde{\omega(\lambda)},\widetilde{\alpha(\lambda)},z)\,\mathrm{d}\mu  \\
   &=&\int_{U}L_1(\widetilde{F};\widetilde{\omega(\lambda)},\widetilde{\alpha(\lambda)},z)\,\mathrm{d}\mu \\
   &>& 0.
\end{eqnarray*}
\end{proof}

Before proving the case where the set
$\mathrm{Fix}_{\mathrm{Cont},I}(F)$ is infinite, let us recall two
results:

\begin{prop}[Franks' Lemma \cite{F}]\label{prop:Franks' Lemma}
Let $F: \mathbb{R}^2\rightarrow \mathbb{R}^2$ be an orientation
preserving homeomorphism. If $F$  possesses a  periodic free disk
chain, a family $(U_{r})_{r\in\mathbf{Z}/n\mathbf{Z}}$ of pairwise
disjoint free topological open disks, such that for every
$r\in\mathbf{Z}/n\mathbf{Z}$, one of the positive iterates of
$U_{r}$ meets $U_{r+1}$, then $F$ has at least one fixed point.
\end{prop}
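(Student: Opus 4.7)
The plan is to prove the lemma by contradiction, assuming $F$ has no fixed point in $\mathbb{R}^2$, and to invoke Brouwer's plane translation theorem. That theorem asserts that any fixed-point-free orientation preserving homeomorphism of $\mathbb{R}^2$ is a \emph{Brouwer homeomorphism}: through every point $z$ there passes a \emph{translation arc} $\alpha$ from $z$ to $F(z)$ whose interior is disjoint from $F(\alpha)$, and, by concatenating the $F$-iterates of $\alpha$, a proper topological line $L \subset \mathbb{R}^2$ such that $L \cap F(L) = \emptyset$, with $F$ conjugate on a neighborhood of $L$ to a translation. In particular every orbit escapes to infinity in both directions.

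Given the periodic free disk chain $(U_r)_{r \in \mathbb{Z}/n\mathbb{Z}}$, I would select points $z_r \in U_r$ and minimal positive integers $k_r$ with $F^{k_r}(z_r) \in U_{r+1}$. Since each $U_r$ is a free disk and the $U_r$ are pairwise disjoint, the forward orbit segment $z_r, F(z_r), \ldots, F^{k_r}(z_r)$ leaves $U_r$ at the first step and only enters $U_{r+1}$ at the last step. Choosing arcs $\eta_{r+1} \subset U_{r+1}$ joining $F^{k_r}(z_r)$ to $z_{r+1}$ inside $U_{r+1}$, and realizing each forward orbit piece as an embedded arc $\beta_r$ from $z_r$ to $F^{k_r}(z_r)$ (after a small perturbation), I would concatenate these to form a topological loop $\Gamma = \prod_{r \in \mathbb{Z}/n\mathbb{Z}} \beta_r \eta_{r+1} \subset \mathbb{R}^2$. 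The construction produces a loop that visits each $U_r$ cyclically and whose homotopy class relative to the orbit structure encodes a genuine ``recurrent'' behavior of $F$.

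The contradiction comes from showing that such a recurrent loop cannot coexist with the Brouwer translation structure: one would pick a Brouwer line $L$ through $z_0$ and observe that $L$ must separate $z_0$ from $F^{k_0}(z_0)$, hence from $z_1$, and by iterating around the chain one returns to $z_0$ having crossed $L$ in a way that contradicts $L \cap F^j(L) = \emptyset$ for all $j \geq 1$. Equivalently, an index/degree computation on the region bounded by $\Gamma$ (or a resolution of it into a Jordan curve) forces the Lefschetz index of $F$ there to be nonzero, contradicting the absence of fixed points.

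The main obstacle I expect is the topological book-keeping: ensuring $\Gamma$ can be taken to be a Jordan curve, handling possible self-intersections of the orbit arcs $\beta_r$ with the connecting arcs $\eta_{r+1}$, and making the separation/index argument precise. This is exactly where Franks' original proof is delicate: he uses a careful combinatorial analysis of the chain together with Brouwer's theorem (rather than a direct index computation) to reduce to a contradiction. An alternative route, following Guillou's presentation, is to use Brouwer translation arcs directly and show that a periodic free disk chain would force the construction of a \emph{Brouwer cycle}, which the translation theorem forbids; this seems to me the cleanest way to package the final step.
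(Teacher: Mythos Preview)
The paper does not prove this proposition at all: it is quoted as Franks' Lemma with a citation to \cite{F} and used as a black box in the proof of Theorem~\ref{prop:F is not constant if the contractible fixed points is finite}. So there is no ``paper's own proof'' to compare your proposal against.

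That said, your sketch is broadly in the right spirit---the standard proof does proceed by contradiction via Brouwer's plane translation theorem, and the heart of the matter is exactly the incompatibility you identify between a periodic disk chain and the ``no recurrence'' behavior of a Brouwer homeomorphism. Your honest acknowledgment that the topological book-keeping (making $\Gamma$ embedded, controlling intersections, formalizing the separation or index argument) is where the real work lies is accurate; Franks' original argument and Guillou's reformulation both spend most of their effort there, and your outline does not yet supply that content. If you were asked to \emph{prove} the lemma rather than cite it, you would need to fill in precisely that step---for instance, by showing carefully that a free disk for a Brouwer homeomorphism is wandering (so no iterate of $U_r$ can ever return to $U_r$), which already suffices to kill the $n=1$ case, and then reducing the general case by a brick-decomposition or translation-arc argument.
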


\begin{thm}[\cite{J}]\label{thm:O.Jaulent}
Let $M$ be an oriented surface and $F$ be the time-one map of an
identity isotopy $I$ on $M$. There exists a closed subset $X\subset
\mathrm{Fix}(F)$ and an isotopy $I'$ in $\mathrm{Homeo(M\setminus
X)}$ joining $\mathrm{Id}_{M\setminus X}$ to $F|_{M\setminus X}$
such that
\begin{enumerate}
  \item For all $z\in X$, the loop $I(z)$ is homotopic to zero in
  $M$.
  \item For all $z\in\mathrm{Fix}(F)\setminus X$, the loop $I'(z)$
  is not homotopic to zero in $M\setminus X$.
  \item For all $z\in M\setminus X$, the trajectories $I(z)$ and
  $I'(z)$ are homotopic with fixed endpoints in $M$.
  \item There exists an oriented topological foliation $\mathcal{F}$
  on $M\setminus X$ such that, for all $z\in M\setminus X$, the trajectory
  $I'(z)$ is homotopic to an arc $\gamma$ joining $z$ and $F(z)$ in
  $M\setminus X$
  which is positively to $\mathcal{F}$.
\end{enumerate}
Moreover, the isotopy $I'$ satisfies the following property:

\begin{enumerate}
\item[(5)] For all finite $Y\subset X$, there exists an isotopy $I_Y'$
joining $\mathrm{Id}_M$ and $F$ in $\mathrm{Homeo}(M)$ which fixes
$Y$ such that, if $z\in M\setminus X$, the arc $I'(z)$ and $I_Y'(z)$
are homotopic in $M\setminus Y$. And if $z\in X\setminus Y$, the
loop $I_Y'(z)$ is contractible in $M\setminus Y$.
\end{enumerate}
\end{thm}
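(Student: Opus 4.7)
The plan is to construct the pair $(X, I')$ by a maximal-selection argument (Zorn's lemma), then derive properties (1)--(3) and (5) from maximality together with planar isotopy-extension tools already used in the excerpt, and finally obtain the foliation $\mathcal{F}$ of (4) by invoking Le Calvez's equivariant transverse foliation theorem on $M \setminus X$.

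First I would introduce a poset $\mathcal{P}$ whose elements are pairs $(Y, [J])$, where $Y \subseteq \mathrm{Fix}(F)$ is closed, $J$ is an isotopy in $\mathrm{Homeo}(M \setminus Y)$ from $\mathrm{Id}_{M \setminus Y}$ to $F|_{M \setminus Y}$, and $[J]$ is its homotopy class (rel endpoints) in $\mathrm{Homeo}(M \setminus Y)$, subject to: (a) for every $z \in Y$ the loop $I(z)$ is contractible in $M$; and (b) viewed in $\mathrm{Homeo}(M)$, any representative of $[J]$ is homotopic rel endpoints to $I|_{M \setminus Y}$. Order $\mathcal{P}$ by $(Y_1, [J_1]) \preceq (Y_2, [J_2])$ if $Y_1 \subseteq Y_2$ and the restriction of any representative of $[J_2]$ to $M \setminus Y_1$ lies in $[J_1]$. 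The initial element is $(\emptyset, [I])$, so $\mathcal{P}$ is non-empty.

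Next I would verify that every totally ordered chain $\{(Y_\alpha, [J_\alpha])\}_\alpha$ has an upper bound: take $X_0 = \mathrm{Cl}\bigl(\bigcup_\alpha Y_\alpha\bigr)$ and, on each compact subset $K \subset M \setminus X_0$, use the fact that $K \subset M \setminus Y_\alpha$ for all $\alpha$ large enough (modulo a countable cofinal sub-chain, by second countability of $M$) to define a coherent limiting homotopy class $[J_0]$ on $M \setminus X_0$. Admissibility (a)--(b) is preserved since accumulation points of the $Y_\alpha$ inherit contractibility of $I(z)$ by continuity of $I$, and the homotopy class in $M$ is stable under restriction. Zorn's lemma then yields a maximal element $(X, [I'])$. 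Property (1) is immediate from (a). For (2), if some $z_0 \in \mathrm{Fix}(F) \setminus X$ had $I'(z_0)$ contractible in $M \setminus X$, an application of the planar isotopy-fixing result (compare Remark \ref{rem: contractible fixed point and isotopy} and Corollary \ref{cor:identity isotopy fixes two points}) applied inside a disk neighborhood in $M \setminus X$ would produce a representative of $[I']$ fixing $z_0$, and then $(X \cup \{z_0\}, [I'|_{M \setminus (X \cup \{z_0\})}])$ would be admissible and strictly above $(X, [I'])$, contradicting maximality. Property (3) is exactly the content of (b). Property (5) is obtained by applying Lemma \ref{rem:identity isotopies fix three points on sphere}/Corollary \ref{cor:identity isotopy fixes two points} finitely many times inside pairwise disjoint disks around the points of the finite set $Y \subset X$, using (a) to ensure the relevant loops are contractible and (3) to match homotopy classes with $I'$ on $M \setminus X$.

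For property (4), the key point is that by property (2) the isotopy $I'$ on $M \setminus X$ has \emph{no} contractible fixed points in the sense of Le Calvez, so the equivariant foliation theorem (Théorème~1.3 of \cite{P1}, and its refinement in \cite{P3}) applies directly to $I'$ on the open surface $M \setminus X$: it produces an oriented topological foliation $\mathcal{F}$ on $M \setminus X$ such that every trajectory $I'(z)$ is homotopic, with fixed endpoints in $M \setminus X$, to an arc positively transverse to $\mathcal{F}$. The main obstacle I anticipate is the chain-upper-bound step in Zorn's lemma: the union $\bigcup_\alpha Y_\alpha$ is in general not closed, and one must argue that (i) its closure still consists only of fixed points whose loops $I(z)$ are contractible in $M$, and (ii) the limiting isotopy class $[J_0]$ on $M \setminus X_0$ is well defined independently of the choices made in the coherent limit. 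Both points require careful control of the compact-open convergence of isotopy representatives on an exhaustion of $M \setminus X_0$, plus a reduction to a countable cofinal subchain to make the inverse-limit description of $[J_0]$ meaningful.
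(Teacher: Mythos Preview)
The paper does not prove this theorem at all: it is quoted verbatim from Jaulent \cite{J} and used as a black box (notably in the proof of Theorem \ref{prop:F is not constant if the contractible fixed points is finite}). There is therefore no proof in the paper to compare your attempt against.

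That said, your outline is essentially Jaulent's own strategy: build a maximal pair $(X,[I'])$ by Zorn's lemma, deduce (1)--(3) and (5) from admissibility and maximality, and then invoke Le Calvez's equivariant transverse-foliation theorem on the open surface $M\setminus X$, where property (2) ensures the lift of $I'$ to the universal cover of $M\setminus X$ is fixed-point free. You have also correctly located the hard step (the chain upper bound).

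Two concrete issues in your write-up. First, your partial order is stated backwards: if $Y_1\subseteq Y_2$ then $M\setminus Y_2\subset M\setminus Y_1$, so ``restriction of $[J_2]$ to $M\setminus Y_1$'' is meaningless. The right compatibility is at the level of \emph{trajectories}: for each $z\in M\setminus Y_2$, the arc $J_2(z)$, pushed into $M\setminus Y_1$ by inclusion, should be homotopic rel endpoints to $J_1(z)$. In fact Jaulent works not with isotopies of $M\setminus Y$ but with coherent families of homotopy classes of arcs $z\rightsquigarrow F(z)$ in $M\setminus Y$; producing a genuine isotopy on $M\setminus X$ that realises the maximal family is a separate, nontrivial step. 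Second, your argument for (5) via ``finitely many applications'' of Corollary \ref{cor:identity isotopy fixes two points} in disjoint disks is too quick: the points of $Y$ need not lie in disjoint disks of $M$, and the contractibility you have is only in $M$, not in a small disk; Jaulent obtains $I_Y'$ directly from the construction of $I'$ as a limit of isotopies fixing larger and larger finite sets, not by a post-hoc local modification.
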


\begin{proof}[Proof of Theorem \ref{prop:F is not
constant if the contractible fixed points is finite} for the case
$\sharp\mathrm{Fix}_{\mathrm{Cont},I}(F)=+\infty$]\qquad\par\smallskip

Suppose that $X$, $I'$ and $\mathcal {F}$ are respectively the
closed contractible
 fixed points set, the isotopy, and  the foliation, as stated in Theorem
 \ref{thm:O.Jaulent}. Obviously, $X\neq\emptyset$ (see Remark \ref{rem: contractible fixed point and isotopy}) and $\mu(M\setminus X)>0$.
 Assume that $X'$ is the union of the connected
 components of $X$ that separate $M$. Write $M\setminus X'=\sqcup_i S_i$ where  $S_i$ is an open $F$-invariant
 connected subsurface of $M$ (see \cite{BK}). For every $i$, we denote by $I'_i$ the restriction of $I'$ on
$S_i\setminus (S_i\cap X)$. We claim that
 $$\sharp \{\text{the connected components of}\,\,
\partial S_i \cup (S_i\cap X)\} \geq 2\quad \text{for every i}.$$

To prove this claim, we can suppose that $\sharp \{\text{the
connected components of}\,\,
\partial S_i \cup (X\cap S_i)\}$ is finite. If $S_i$ is a
subsurface of the sphere, we only need to consider the case of disk.
In this case, by Proposition \ref{prop:Franks' Lemma} and Item 2 of
Theorem~\ref{thm:O.Jaulent}, $X\cap S_i\neq \emptyset$ and thereby
the claim follows. When $S_i$ is not a subsurface of the sphere, we
can get a closed surface $S'_i$ through compactifying $S_i$. More
precisely, we add one point on each connected component of $\partial
S_i\cup (S_i\cap X)$. Note that $S_i\setminus (S_i\cap X)$ is
embedded in $S_i'$ and we can extend $I'_i$ on $S_i\setminus
(S_i\cap X)$ to an identity isotopy on $S_i'$ which is still denoted
by $I'_i$. To be more precise, let $X_i'=S'_i\setminus (S_i\setminus
(S_i\cap X))$ be the added points set. Observing that $X_i'$ is
totally connected (in fact finite), the isotopy $I_i'$ can be
extended to an identity isotopy on $S_i'$ that fixes every point in
$X_i'$ (see \cite[Remark 1.18]{J}). By the definitions of $S_i'$ and
 $I'_i$, and the items 1 and 3 of Theorem~\ref{thm:O.Jaulent}, we
get $\rho_{S'_i,I'_i}(\mu)=0\in H_1(S'_i,\mathbb{R})$. According to
Item 2 of Theorem~\ref{thm:O.Jaulent}, $X'_i$ is a maximal unlink
set of $I'_i$. Thanks to the proof of Theorem \ref{thm:hamiltonian
map has at least 3 contricible fixed points} (\cite[Theorem
10.1]{P1}), we have $\sharp X_i'\geq3$. Therefore, the claim
holds.\smallskip

Fix one connected set $S_i$. Similarly to the finite case, we choose
a point $z\in (\mathrm{Rec}^+(F)\setminus\mathrm{Fix}(F))\cap S_i$
and a leaf $\lambda\in\mathcal {F}$ containing $z$. In \cite{P3},
the proofs of Proposition 4.1 and 4.3 (page 150 and 152, for $S_i$
being a subset of the sphere) and of Proposition 6.1 (page 166, for
$S_i$ being not a subset of the sphere)  imply that the $\omega$-set
of $\lambda$, $\omega(\lambda)$ (resp. the $\alpha$-set of
$\lambda$, $\alpha(\lambda)$), is connected and is contained in a
connected component of $\partial S_i\cup(X\cap S_i)$. We write the
connected component as $X_+(\lambda)$ (resp. $X_-(\lambda)$).
Moreover, $X_+(\lambda)\neq X_-(\lambda)$. Choose a lift
$\widetilde{\lambda}$ of $\lambda$. We need to consider the
following four cases: the set $\omega(\widetilde{\lambda})$ or
$\alpha(\widetilde{\lambda})$ contains $\infty$ or not.\smallskip

 Take two points $a\in\alpha(\lambda)$ and
$b\in\omega(\lambda)$. Let $Y=\{a,b\}$ and $I'_Y$ be the isotopy as
 in Theorem \ref{thm:O.Jaulent}. Suppose that
$\widetilde{I}\,'_Y$ is the identity lift of $I'_Y$ to
$\widetilde{M}$. Notice that
\begin{description}
  \item[(A1)] if $z\in M\setminus X$, then the arcs $I'(z)$
and $I_Y'(z)$ are homotopic in $M\setminus Y$ (Item 5, Theorem
\ref{thm:O.Jaulent}), and  $I_Y'(z)$ is homotopic to an arc $\gamma$
from $z$ to $F(z)$ in $M\setminus Y$ and positively transverse to
$\mathcal{F}$ (Item 4, Theorem \ref{thm:O.Jaulent});
  \item[(A2)] if $z\in X\setminus Y$, then $\gamma\wedge I_Y'(z)=0$
 where $\gamma$ is any path from $a$ to $b$ (Item 5, Theorem \ref{thm:O.Jaulent}).
\end{description}

We now suppose that neither $\alpha(\widetilde{\lambda})$ nor
$\omega(\widetilde{\lambda})$ contains $\infty$. Replacing
$\alpha(\lambda)$ by $a$, $\omega(\lambda)$ by $b$, and $I'$ by
$I'_Y$ in the proof of the finite case, we can get
$I_\mu(\widetilde{F};a,b)>0$.

We now consider the case that either $\alpha(\widetilde{\lambda})$
or $\omega(\widetilde{\lambda})$ contains $\infty$. Recall that
$\widetilde{d}$ is the distance on $\widetilde{M}$ induced by
 a distance $d$ on $M$ which is further induced by a Riemannian metric on $M$. Define
$\widetilde{d}(\widetilde{z},\widetilde{C})=\inf\limits_{\widetilde{c}\in\widetilde{C}}
       \widetilde{d}(\widetilde{z},\widetilde{c})$, if
       $\widetilde{z}\in \widetilde{M}$ and $\widetilde{C}\subset
       \widetilde{M}$. Take a sequence
$\{(\widetilde{a}_m,\widetilde{b}_m)\}_{m\geq1}$ such that
\begin{itemize}
       \item $\pi(\widetilde{a}_m)=a$ and $\pi(\widetilde{b}_m)=b$\,\,;
        \item if $\alpha(\widetilde{\lambda})$
        (resp. $\omega(\widetilde{\lambda})$) does not contain $\infty$, we set $\widetilde{a}_m=\widetilde{a}$
         (resp. $\widetilde{b}_m=\widetilde{b})$ for every $m$ where $\widetilde{a}\in\pi^{-1}(a)\cap\alpha(\widetilde{\lambda})$
         (resp. $\widetilde{b}\in\pi^{-1}(b)\cap\omega(\widetilde{\lambda})$);
       \item
       $\lim\limits_{m\rightarrow+\infty}\widetilde{d}(\widetilde{a}_m,\widetilde{\lambda})=0$
       and
       $\lim\limits_{m\rightarrow+\infty}\widetilde{d}(\widetilde{b}_m,\widetilde{\lambda})=0$.
     \end{itemize}

For every $m$, there exists $\widetilde{c}_m$ (resp.
$\widetilde{c}\,'_m$) on $\widetilde{\lambda}$ such that
$\widetilde{d}(\widetilde{a}_m,\widetilde{c}_m)=\widetilde{d}(\widetilde{a}_m,\widetilde{\lambda})$
(resp.
$\widetilde{d}(\widetilde{b}_m,\widetilde{c}\,'_m)=\widetilde{d}(\widetilde{b}_m,\widetilde{\lambda})$).
Note that $\widetilde{c}_m=\widetilde{a}_m=\widetilde{a}$ (resp.
$\widetilde{c}\,'_m=\widetilde{b}_m=\widetilde{b}$) and
$\widetilde{d}(\widetilde{a}_m,\widetilde{\lambda})=0$ (resp.
$\widetilde{d}(\widetilde{b}_m,\widetilde{\lambda})=0$) if
$\alpha(\widetilde{\lambda})$ (resp. $\omega(\widetilde{\lambda})$)
does not contain $\infty$. Choose a simple smooth path
$\widetilde{l}_m$ (resp. $\widetilde{l}\,'_m$) from
$\widetilde{a}_m$ (resp. $\widetilde{c}\,'_m$) to $\widetilde{c}_m$
(resp. $\widetilde{b}_m$) such that the length of $\widetilde{l}_m$
(resp. $\widetilde{l}\,'_m$) converges to zero as
$m\rightarrow+\infty$ and
$\pi(\widetilde{l}_{m+1})\subset\pi(\widetilde{l}_{m})$ (resp.
$\pi(\widetilde{l}\,'_{m+1})\subset\pi(\widetilde{l}\,'_{m})$).
Here, we assume that the simple path $\widetilde{l}_m$ (resp.
$\widetilde{l}\,'_m$) is empty if $\alpha(\widetilde{\lambda})$
(resp. $\omega(\widetilde{\lambda})$) does not contain $\infty$. Let
$\widetilde{\gamma}_m=\widetilde{l}_m\widetilde{\lambda}_m\widetilde{l}\,'_m$,
where $\widetilde{\lambda}_m$ is the sub-path of
$\widetilde{\lambda}$ from $\widetilde{c}_m$ to
$\widetilde{c}\,'_m$. Then $\widetilde{\gamma}_m$ is a path from
$\widetilde{a}_m$ to $\widetilde{b}_m$.\smallskip


We know that, for every $m\geq1$, the linking number
$i(\widetilde{F};\widetilde{a}_m,\widetilde{b}_m,z')$ exists for
$\mu$-a.e. $z'\in M\setminus\{a,b\}$. Hence, the linking number
$i(\widetilde{F};\widetilde{a}_m,\widetilde{b}_m,z')$ exists on a
full measure subset of $M\setminus\{a,b\}$ for all $m$.\smallskip

According to \textbf{A2} above, we have
$i(\widetilde{F};\widetilde{a}_m,\widetilde{b}_m,z')=0$ if $z'\in
X\setminus\{a,b\}$. To finish Theorem \ref{prop:F is not constant if
the contractible fixed points is finite}, we need the following
lemma whose proof will be provided afterwards.

\begin{lem}\label{lem:inf geq 0}
$\liminf\limits_{m\rightarrow+\infty}
i(\widetilde{F};\widetilde{a}_m,\widetilde{b}_m,z')\geq0\quad\mathrm{for}\quad\mu\text{-}\mathrm{a.
e.}\quad z'\in \mathrm{Rec}^+(F)\setminus X.$\end{lem}

Armed with this lemma we obtain
\begin{equation}\label{liminf i(F,am,bm,z)geq 0}
    \liminf\limits_{m\rightarrow+\infty}i(\widetilde{F};\widetilde{a}_m,\widetilde{b}_m,z')\geq0
    \quad\mathrm{for}\quad
    \mu\text{-}\mathrm{a.e.}\quad z'\in\mathrm{Rec}^+(F)\setminus\{a,b\}.
\end{equation}

From the continuity of $I_Y'$ and the hypothesis on $\mu$, there
exists an open free disk $U$ containing $z$ such that $\mu(U)>0$ and
for $z'\in U\cap\mathrm{Rec}^+(F)$,
\begin{equation}\label{L1 >0}
    \lim\limits_{m\rightarrow+\infty}L_1(\widetilde{F};\widetilde{a}_m,\widetilde{b}_m,z')>0.
\end{equation}

As $\rho_{M,I}(\mu)=0$, by Proposition \ref{clm:L is well defined},
the inequalities \ref{liminf i(F,am,bm,z)geq 0} and \ref{L1 >0}, and
Fatou's lemma, we have
\begin{eqnarray*}
I_\mu(\widetilde{F};a,b)&=&\lim_{m\rightarrow+\infty}i_\mu(\widetilde{F};\widetilde{a}_m,\widetilde{b}_m)\\
&=&\lim_{m\rightarrow+\infty}\int_{M\setminus\{a,b\}}i(\widetilde{F};\widetilde{a}_m,\widetilde{b}_m,z)\,\mathrm{d}\mu\\&\geq&
\int_{M\setminus\{a,b\}}\liminf_{m\rightarrow+\infty}i(\widetilde{F};\widetilde{a}_m,\widetilde{b}_m,z)\,\mathrm{d}\mu\\&\geq&
\int_{\bigcup_{k\geq0}F^k(U)}\liminf_{m\rightarrow+\infty}i(\widetilde{F};\widetilde{a}_m,\widetilde{b}_m,z)\,\mathrm{d}\mu \\
  &=& \int_{U}\liminf_{m\rightarrow+\infty}\tau(z)i(\widetilde{F};\widetilde{a}_m,\widetilde{b}_m,z)\,\mathrm{d}\mu \\
   &=&\int_{U}\liminf_{m\rightarrow+\infty}\tau^{*}(z)i(\widetilde{F};\widetilde{a}_m,\widetilde{b}_m,z)\,\mathrm{d}\mu \\
   &=&\int_{U}\liminf_{m\rightarrow+\infty}L^{*}(\widetilde{F};\widetilde{a}_m,\widetilde{b}_m,z)\,\mathrm{d}\mu  \\
   &=&\int_{U}\liminf_{m\rightarrow+\infty}L_1(\widetilde{F};\widetilde{a}_m,\widetilde{b}_m,z)\,\mathrm{d}\mu \\
   &>& 0.
\end{eqnarray*}

Therefore, we only need to prove Lemma \ref{lem:inf geq 0}.
\begin{proof}[Proof of Lemma \ref{lem:inf geq 0}]Fix one point $z'\in \mathrm{Rec}^+(F)\setminus X$ and choose
an open disk $U$ containing $z'$ (here again, we suppose that $U\cap
\lambda=\emptyset$). By \textbf{A1} and the construction of
$\widetilde{\gamma}_m$, for every $n\geq1$, there exists
$m(z',n)\in\mathbb{N}$ such that when $m\geq m(z',n)$, the value
\begin{equation}\label{Ln geq 0}L_n(\widetilde{F};\widetilde{a}_m,\widetilde{b}_m,z')=
\widetilde{\gamma}_m\wedge\widetilde{\Gamma}^n_{\widetilde{I}\,'_Y,z'}
=\pi(\widetilde{\gamma}_m)\wedge\Gamma^n_{I\,'_Y,z'}\geq0\end{equation}
is constant with regard to $m$. \smallskip

We prove this lemma by contradiction. Suppose that $$\mu\{z'\in
\mathrm{Rec}^+(F)\setminus X\mid
\liminf\limits_{m\rightarrow+\infty}
i(\widetilde{F};\widetilde{a}_m,\widetilde{b}_m,z')<0\}>0.$$

There exists a small number $c>0$ such that
\begin{equation}\label{mu(E)>c}
    \mu\{z'\in
\mathrm{Rec}^+(F)\setminus X\mid
\liminf\limits_{m\rightarrow+\infty}
i(\widetilde{F};\widetilde{a}_m,\widetilde{b}_m,z')<-c\}>c.
\end{equation}

Write $E=\{z'\in \mathrm{Rec}^+(F)\setminus X\mid
\liminf\limits_{m\rightarrow+\infty}
i(\widetilde{F};\widetilde{a}_m,\widetilde{b}_m,z')<-c\}$. Fix a
point $z'\in E$ and an open disk $U$ containing $z'$ as before. By
taking a subsequence if necessary, we can suppose that
$$-\infty\leq\lim\limits_{m\rightarrow+\infty}
i(\widetilde{F};\widetilde{a}_m,\widetilde{b}_m,z')<-c.$$ Then there
exists $N(z')$ such that for $m\geq N(z')$,
$$
i(\widetilde{F};\widetilde{a}_m,\widetilde{b}_m,z')=\lim_{n\rightarrow+\infty}
\frac{L_n(\widetilde{F};\widetilde{a}_m,\widetilde{b}_m,z')}{\tau_n(z')}<-c.$$
Fix $m_0\geq N(z')$. There exists $n(z',m_0)\in\mathbb{N}$ such that
when $n\geq n(z',m_0)$,
$$\frac{L_n(\widetilde{F};\widetilde{a}_{m_0},\widetilde{b}_{m_0},z')}{\tau_n(z')}<-c.$$
Then we can choose  $n_0\geq n(z',m_0)$ such that
$$L_{n_0}(\widetilde{F};\widetilde{a}_{m_0},\widetilde{b}_{m_0},z')<-c\tau_{n_0}(z').$$
Based on Inequality \ref{Ln geq 0}, there exists $m(z',n_0)>m_0$
such that when $m\geq m(z',n_0)$,
$$L_{n_0}(\widetilde{F};\widetilde{a}_m,\widetilde{b}_m,z')\geq0.$$
By fixing $m_1\geq m(z',n_0)$, there exists $n(z',m_1)>n_0$ such
that when $n\geq n(z',m_1)$,
$$\frac{L_n(\widetilde{F};\widetilde{a}_{m_1},\widetilde{b}_{m_1},z')}{\tau_n(z')}<-c.$$
Then  we can choose $n_1\geq n(z',m_1)$ such that
$$L_{n_1}(\widetilde{F};\widetilde{a}_{m_1},\widetilde{b}_{m_1},z')<-c\tau_{n_1}(z').$$

By induction, we can construct a sequence
$\{(m_i,n_i)\}_{i\geq0}\subset \mathbb{N}\times\mathbb{N}$
satisfying that
\begin{description}
  \item[(B1)]\, $\{m_i\}_{i\geq0}$ and $\{n_i\}_{i\geq0}$ are strictly increasing
sequences;
  \item[(B2)]\, for all $i\geq0$, we have
  $$L_{n_i}(\widetilde{F};\widetilde{a}_{m_i},\widetilde{b}_{m_i},z')<-c\tau_{n_i}(z')\quad
\mathrm{and} \quad
L_{n_i}(\widetilde{F};\widetilde{a}_{m_{i+1}},\widetilde{b}_{m_{i+1}},z')\geq0.$$
\end{description}

According to the positively transverse property of $\mathcal {F}$,
it is clear that the negative part of the value
$L_{n_i}(\widetilde{F};\widetilde{a}_{m_i},\widetilde{b}_{m_i},z')$
comes from at least one of the algebraic intersection numbers of the
curve $\widetilde{\Gamma}^n_{\widetilde{I}\,'_Y,z'}$ with
$\widetilde{l}_{m_i}$ and $\widetilde{l}\,'_{m_i}$.\smallskip

We deal with the case where both $\alpha(\widetilde{\lambda})$ and
$\omega(\widetilde{\lambda})$ contain $\infty$, and other cases
follow similarly. In this case, the both sets
$\alpha(\lambda),\omega(\lambda)\subset X$ are not contractible.
According to Item 5 of Theorem \ref{thm:O.Jaulent}, for any $z''\in
M\setminus Y$, the loop $I_Y'^{-1}I'(z'')$ is contractible in
$M\setminus Y$ (see Section \ref{sec:Identity isotopies} for the
definition of $I_Y'^{-1}$). It implies that
$\widetilde{\gamma}_m\wedge I_Y'^{-1}I'(z'')=0$ for all $m$ and
$z''\in\mathrm{Rec}^+(F)\setminus X$. Note that $I_Y'$ fixes $a$ and
$b$, the loop $I_Y'(z'')$ is contractible in $M\setminus Y$ for any
$z''\in X\setminus Y$, but
 $\alpha(\lambda)$ and $\omega(\lambda)$ are not contractible. By
the continuity of $I_Y'$, we get $|\pi(\widetilde{l}_{m_i})\wedge
I_Y'(x)|\leq1$ (resp. $|\pi(\widetilde{l}\,'_{m_i})\wedge
I_Y'(x)|\leq1$) if the algebraic intersection number is
defined\footnote{\,The algebraic intersection number of
$\pi(\widetilde{l}_{m_i})\wedge I_Y'(x)$ (resp.
$\pi(\widetilde{l}\,'_{m_i})\wedge I_Y'(x)$) is well defined for
$\mu$-a.e. $x\in \mathrm{Rec}^+(F)$ if
$\mu(\pi(\widetilde{l}_{m_i}))=0$ (resp.
$\mu(\pi(\widetilde{l}\,'_{m_i}))=0$), which can be easily done by
slightly perturbing $\lambda$ and $\widetilde{l}_{m_i}$ (resp.
$\widetilde{l}\,'_{m_i}$) if necessary.} and $x$ is close to $a$
(resp. $b$). Based on the construction of $\widetilde{\lambda}_m$,
\textbf{B1} and \textbf{B2}, there must be a sequence of open disks
$\{U^a_i\}_{i\geq0}$ containing the set
$(I_Y')^{-1}(\pi(\widetilde{l}_{m_i}))=\cup_{y\in\pi(\widetilde{l}_{m_i})}(I_Y')^{-1}(y)$
(resp. $\{U^b_i\}_{i\geq0}$ containing the set
$(I_Y')^{-1}(\pi(\widetilde{l}\,'_{m_i}))=\cup_{y\in\pi(\widetilde{l}\,'_{m_i})}(I_Y')^{-1}(y)$)
that satisfies
\begin{description}
  \item[(C1)] $U^a_{i+1}\subset U^a_{i}$ (resp. $U^b_{i+1}\subset U^b_{i}$)
 and $\mu(U^a_i)\rightarrow0$ (resp. $\mu(U^b_i)\rightarrow0$) as
 $i\rightarrow+\infty$ (since the measure $\mu$ has no
atoms on $\mathrm{Fix}_{\mathrm{Cont},I}(F)$);
  \item[(C2)] for every $i\geq0$,
 $$\frac{1}{\tau_{n_i}(z')}\sum_{j=0}^{\tau_{n_i}(z')-1}\chi_{U^a_i}\circ
 F^j(z')>\frac{c}{2}\quad \mathrm{or} \quad\frac{1}{\tau_{n_i}(z')}\sum_{j=0}^{\tau_{n_i}(z')-1}\chi_{U^b_i}\circ
 F^j(z')>\frac{c}{2},$$ where $\chi_U$ is the indicator function
 of $U\subset M$.
\end{description}

Denote by $\chi_{U}^*(x)$ the limit of
$\frac{1}{n}\sum\limits_{j=0}^{n-1}\chi_{U}\circ
 F^j(x)$ as $n\rightarrow+\infty$ for $\mu$-a.e. $x\in M$ (due to Birkhoff Ergodic theorem). By
 \textbf{C2}
and Inequality~\ref{mu(E)>c}, for each $i$, we have
 $$\mu(\{x\in \mathrm{Rec}^+(F)\setminus X\mid\chi_{U^a_i}^*(x)\geq\frac{c}{2}\quad\mathrm{or}
 \quad\chi_{U^b_i}^*(x)\geq\frac{c}{2}\})>c.$$ This implies that
 $\int_M(\chi_{U^a_i}^*(x)+\chi_{U^b_i}^*(x))\mathrm{d}\mu\geq\frac{c^2}{2}>0$ for every $i$. On the
 other hand, thanks to Birkhoff Ergodic theorem and \textbf{C1}, we have $$\int_M(\chi_{U^a_i}^*(x)+\chi_{U^b_i}^*(x))\mathrm{d}\mu
 =\int_M(\chi_{U^a_i}(x)+\chi_{U^b_i}(x))\mathrm{d}\mu=\mu(U^a_i)+\mu(U^b_i)\rightarrow0$$
 as $i\rightarrow+\infty$, which gives a contradiction.
We have finished the proof of Lemma \ref{lem:inf geq 0}.\end{proof}
We have completed the proof of Theorem \ref{prop:F is not constant
if the contractible fixed points is finite}.
\end{proof}

\section{The proof of Theorem \ref{cor:the symplectic action when M with genus bigger
1}}

\newenvironment{cor04}{\noindent\textbf{Theorem \ref{cor:the symplectic action when M with genus bigger
1}}~\itshape}{\par}
\begin{cor04}\label{cor:g is great than 1}Let $F$ be the time-one map of an identity isotopy $I$ on a closed oriented
surface $M$ with $g>1$. If $I$ satisfies the WB-property, $F\in
\mathrm{Homeo}(M)\setminus\{\,\mathrm{Id}_{M}\}$, and
$\mu\in\mathcal {M}(F)$ has full support, then
 there exist two distinct fixed points $\widetilde{a}$ and
$\widetilde{b}$ of $\widetilde{F}$ such that
$i_\mu(\widetilde{F};\widetilde{a},\widetilde{b})\neq0$.
\end{cor04}
\begin{proof}
If $\rho_{M,I}(\mu)=0$, by Theorem \ref{prop:F is not constant if
the contractible fixed points is finite}, there exist two distinct
contractible fixed points $a$ and $b$ of $F$ such that
$I_{\mu}(\widetilde{F};a,b)\neq0$, thus for any their lifts
$\widetilde{a}$ and $\widetilde{b}$ we have
$i_{\mu}(\widetilde{F};\widetilde{a},\widetilde{b})=I_{\mu}(\widetilde{F};a,b)\neq
0$.\smallskip

If $\rho_{M,I}(\mu)\neq0$, there exists $\alpha\in G$ such that
$\varphi(\alpha)\wedge\rho_{M,I}(\mu) \neq0$, where $\varphi$ is the
Hurewitz homomorphism from $G$ to $H_1(M,\mathbb{Z})$. By
Lefschetz-Nielsen's formula, we know that
$\mathrm{Fix}_{\mathrm{Cont},I}(F)\neq \emptyset$. Choose $a\in
\mathrm{Fix}_{\mathrm{Cont},I}(F)$ and a lift $\widetilde{a}$ of
$a$. There exists an isotopy $I'$ homotopic to $I$ that fixes $a$
(see Remark \ref{rem: contractible fixed point and isotopy}). It is
lifted to an isotopy $\widetilde{I}\,'$ that fixes $\widetilde{a}$
and $\alpha(\widetilde{a})$. Let $z\in \mathrm{Rec}^+(F)$ and $U$ be
an open disk that contains $z$. Observe that if $\widetilde{\gamma}$
is an oriented path from $\widetilde{a}$ to $\alpha(\widetilde{a})$,
then the intersection number
$\widetilde{\gamma}\wedge\widetilde{\Gamma}^n_{\widetilde{I}\,',z}$
is equal to the intersection of the loop $\pi(\widetilde{\gamma})$
with the loop $\Gamma^n_{I',z}=I'\,^{\tau_n(z)}(z)\gamma_{\Phi^n(z),
z}$ (see Section \ref{subsec:the definition of a new linking
number}). Recall the fact that
$$\lim_{n\rightarrow+\infty}\frac{[\Gamma^n_{I',z'}]}{\tau_n(z')}=\rho_{M,I}(z')$$
for $\mu$-a.e. $z'\in U$ (see \cite[pages 54-56]{W2}). We get that
\begin{eqnarray*}i_\mu(\widetilde{F};\widetilde{a},\alpha(\widetilde{a}))&=&
\int_{M\setminus\{\pi(\widetilde{a})\}}i(\widetilde{F};\widetilde{a},\alpha(\widetilde{a}),z)\,\mathrm{d}\mu\\
&=&
\int_{M\setminus\{\pi(\widetilde{a})\}}\lim_{n\rightarrow+\infty}\frac{L_n(\widetilde{F};
\widetilde{a},\alpha(\widetilde{a}),z)}{\tau_n(z)}\,\mathrm{d}\mu\\
&=&\int_{M\setminus\{\pi(\widetilde{a})\}}\lim_{n\rightarrow+\infty}\frac{\widetilde{\gamma}
\wedge\widetilde{\Gamma}^n_{\widetilde{I}\,',z}}{\tau_n(z)}\,\mathrm{d}\mu\\
&=&\pi(\widetilde{\gamma})\wedge\rho_{M,I'}(\mu)\\
&=&\varphi(\alpha)\wedge\rho_{M,I}(\mu)\\
&\neq&0.
\end{eqnarray*}
We have finished the proof.\end{proof}

\section{The absence of distortion in
$\mathrm{Ham}^1(\mathbb{T}^2,\mu)$
 and $\mathrm{Diff}^1_*(\Sigma_g,\mu)$ with $g>1$.}\label{sec:distortion of
 group}

In 2002, Polterovich \cite{P} showed us a Hamiltonian version of the
Zimmer program (see \cite{Zim}) dealing with actions of lattices. It
is achieved by using the classical action defined in symplectic
geometry, the symplectic filling function (see Section 1.2 in
\cite{P}), and Schwarz's theorem which we have mentioned in the
beginning of this article. In 2003, Franks and Handel \cite{F2}
developed the Thurston theory of normal forms for surface
homeomorphisms with finite fixed sets. In 2006, they \cite{F4} used
the generalized normal form to give a more general version (the map
is a $C^1$-diffeomorphism and the measure is a Borel finite measure)
 of the Zimmer program on the closed oriented
surfaces. We recommend the reader a survey by Fisher \cite{Fish} and
an article by Brown, Fisher and Hurtado \cite{BFH} for the recent
progress of Zimmer program. We will give an alternative proof of the
$C^1$-version of the Zimmer's conjecture on surfaces when the
measure is a Borel finite measure with full support.\smallskip

Suppose that $F$ is a $C^1$-diffeomorphism of $\Sigma_g$ ($g\geq 1$)
which is the time-one map of an identity isotopy
$I=(F_t)_{t\in[0,1]}$ on $\Sigma_g$ and $\widetilde{F}$ is the
time-one map of the lifted identity isotopy
$\widetilde{I}=(\widetilde{F}_t)_{t\in[0,1]}$ on the universal cover
$\widetilde{M}$ of $\Sigma_g$. Recall that, if
$\mathscr{G}\subset\mathrm{Diff}^1_*(\Sigma_g,\mu)$ is a finitely
generated subgroup containing $F$, $\|F\|_{\mathscr{G}}$ is the word
length of $F$ in $\mathscr{G}$. We have the following proposition
whose proof will be provided in Appendix.

\begin{prop}\label{lem:zimmer program}If there exist two distinct fixed points
$\widetilde{a}$ and $\widetilde{b}$ of $\widetilde{F}$, and a point
$z_*\in\mathrm{Rec}^+(F)\setminus\pi(\{\widetilde{a},\widetilde{b}\})$
such that $i(\widetilde{F};\widetilde{a},\widetilde{b},z_*)$ exists
and is not zero, then for any finitely generated subgroup
$F\in\mathscr{G}\subset\mathrm{Diff}^1_*(\Sigma_g,\mu)$ ($g\geq 1$),
$$\|F^n\|_{\mathscr{G}}\succeq \sqrt{n}.$$
\end{prop}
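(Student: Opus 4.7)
The strategy is to follow the general scheme of Polterovich \cite{P}, with the classical Hamiltonian action replaced by our pointwise linking number invariant. Fix a symmetric generating set $\{g_1,\ldots,g_s\}$ of $\mathscr{G}$ and, for each $g_j$, fix an identity isotopy $J_j$ from $\mathrm{Id}_{\Sigma_g}$ to $g_j$ together with its canonical lift $\widetilde{J}_j$ to the universal cover $\widetilde{\Sigma}_g$. Since every element of $\mathscr{G}$ is a $C^1$-diffeomorphism, there is a uniform Lipschitz constant $K=\max_j(\|Dg_j\|_{\infty},\|Dg_j^{-1}\|_{\infty})$ controlling displacements. For $h=g_{i_\ell}\cdots g_{i_1}\in\mathscr{G}$ of word length $\ell=\|h\|_{\mathscr{G}}$, concatenate the chosen isotopies to obtain an identity isotopy $J_h$ from $\mathrm{Id}_{\Sigma_g}$ to $h$, and form its lift $\widetilde{J}_h$; this determines the lift $\widetilde{h}$ used to compute the linking quantity $L_1(\widetilde{h};\widetilde{a},\widetilde{b},z)$ defined in Section~\ref{subsec:the definition of a new linking number}.

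The first step is to establish a quadratic upper bound
\[
  |L_1(\widetilde{h};\widetilde{a},\widetilde{b},z)|\ \leq\ C\,\|h\|_{\mathscr{G}}^{\,2}
\]
for every $z\in\mathrm{Rec}^+(F)\setminus\pi(\{\widetilde{a},\widetilde{b}\})$ and every $h\in\mathscr{G}$, with a constant $C$ depending only on the generating data and on $\widetilde{a},\widetilde{b}$. One factor of $\ell$ comes from the fact that each generator moves points by at most $K$ (and fixes $\infty$ on the compactified cover), so the trajectory of each lift of $z$ under $\widetilde{J}_h$ is a path of $C^0$-length $O(\ell)$; the number of intersections of a fixed transverse path from $\widetilde{a}$ to $\widetilde{b}$ with any such single trajectory is therefore $O(\ell)$. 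A second factor of $\ell$ counts the lifts that contribute: using the $C^1$ Jacobian control together with area preservation, only those lifts of $z$ whose $\widetilde{J}_h$-trajectory enters a bounded neighbourhood of the segment from $\widetilde{a}$ to $\widetilde{b}$ can contribute, and a volume/covering argument shows there are at most $O(\ell)$ of them.

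The second step is to apply the iteration formula pointwise: since $i(\widetilde{F};\widetilde{a},\widetilde{b},z_*)=c\neq 0$ exists by hypothesis, one has $i(\widetilde{F}^n;\widetilde{a},\widetilde{b},z_*)=nc$. Choosing a small free open disk $U\ni z_*$ and writing $L_M^{(n)}:=L_M(\widetilde{F}^n;\widetilde{a},\widetilde{b},z_*)=\sum_{k=0}^{M-1}L_1(\widetilde{F}^n;\widetilde{a},\widetilde{b},\Phi_{F^n}^k z_*)$, Definition~\ref{def:Intersection number density} gives
\[
  \bigl|L_M^{(n)}\bigr|\ \geq\ \tfrac{|c|\,n}{2}\,\tau_M^{F^n}(z_*)
\]
for all sufficiently large $M$. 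On the other hand, applying the quadratic bound from Step~1 to each of the $M$ summands (with $h=F^n$) yields $|L_M^{(n)}|\leq C M\,\|F^n\|_{\mathscr{G}}^{\,2}$. Dividing by $M$ and letting $M\to\infty$ so that $\tau_M^{F^n}(z_*)/M\to\tau^{*}_{F^n}(z_*)>0$ (by Kac's lemma), we obtain
\[
  \tfrac{|c|\,n}{2}\,\tau^{*}_{F^n}(z_*)\ \leq\ C\,\|F^n\|_{\mathscr{G}}^{\,2},
\]
and hence $\|F^n\|_{\mathscr{G}}\succeq\sqrt{n}$, as required.

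The main obstacle is the quadratic bound in Step~1. Establishing the first factor $\ell$ is straightforward from $C^0$-length control, but the second factor requires a delicate book-keeping of which lifts of $z$ genuinely contribute to the intersection count in the universal cover and how $C^1$-Jacobian bounds restrict this count; this is where the hypothesis $F,g_j\in\mathrm{Diff}^1$ (rather than merely $C^0$) is used, and why the linear lower bound of Proposition~\ref{prop:ggeq1 no torsion} cannot be directly upgraded to this setting.
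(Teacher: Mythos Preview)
Your overall strategy matches the paper's: bound $|L_m(\widetilde{F}^n;\widetilde a,\widetilde b,z_*)|$ quadratically in the word length $N(n)=\|F^n\|_{\mathscr G}$, then combine with the iteration formula $i(\widetilde F^n;\widetilde a,\widetilde b,z_*)=n\,i(\widetilde F;\widetilde a,\widetilde b,z_*)$ to conclude $n\lesssim N(n)^2$. However, two points in your Step~1 are inaccurate, and the second one is a genuine gap.

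First, a minor slip: the bound $|L_1(\widetilde h;\widetilde a,\widetilde b,z)|\le C\|h\|_{\mathscr G}^2$ cannot be uniform in $z$, because by definition $L_1$ already involves $\tau_h(z)$ applications of the isotopy $J_h$ (the curve $\widetilde\Gamma^1$ runs until the first return to $U$). The correct inequality, and the one the paper actually proves, is $|L_m(\widetilde F^n;\widetilde a,\widetilde b,z_*)|\le c_0\,N(n)^2\,\tau_m(n,z_*)$. Dividing by $\tau_m$ and passing to the limit gives $n|c|\le c_0 N(n)^2$ directly, so you do not need Kac's lemma or the existence of $\tau^*_{F^n}(z_*)$ at the single point $z_*$ (which is not guaranteed by the hypotheses anyway).

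Second, and more seriously, you have swapped the roles of the $C^0$ and $C^1$ hypotheses. A path of $C^0$-length $O(\ell)$ can cross a fixed segment arbitrarily many times, so ``length control'' alone does not give your first factor of $\ell$. In the paper the $C^1$ hypothesis enters precisely here: for each generating isotopy $\widetilde I_j$ one considers the angular function $\lambda_j(\widetilde z,\widetilde z')$ measuring the total variation of $\arg(\widetilde F_{j,t}(\widetilde z)-\widetilde F_{j,t}(\widetilde z'))$, and the $C^1$ assumption allows this to extend continuously to the unit--tangent--bundle compactification of $\widetilde M\times\widetilde M\setminus\widetilde\Delta$, yielding a uniform bound $C_1$. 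Summing over the $N(n)$ generator pieces gives one factor of $N(n)$. Conversely, the count of contributing lifts (your second factor) requires no Jacobian or area--preservation argument: after the M\"obius normalisation fixing $\widetilde a,\widetilde b,\infty$, each generator step confines the relevant intersections to an immersed square $A_k$ of diameter $O(N(n))$, which meets $O(N(n))$ fundamental domains, each contributing a bounded number $C_2$ of lifts. This purely metric displacement estimate yields the second factor of $N(n)$; area preservation plays no role.
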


If $F\neq\mathrm{Id}_{\Sigma_g}$ and $\mu$ has full support, by Item
\textbf{A2} in the proof of Theorem \ref{prop:F is not constant if
the contractible fixed points is finite}, we can choose $z_*\in
\mathrm{Rec}^+(F)\setminus X$, such that $\rho_{\Sigma_g,I}(z_*)$
and $i(\widetilde{F};\widetilde{a},\widetilde{b},z_*)$ exist, and
$i(\widetilde{F};\widetilde{a},\widetilde{b},z_*)$ is not zero. By
Proposition \ref{lem:zimmer program}, we can get the following
result which is a generalization of Theorem 1.6 B in \cite{P} on the
closed surfaces.

\begin{cor}\label{cor:sqrt n of growth}
If
$F\in\mathrm{Diff}^1_*(\Sigma_g,\mu)\setminus\{\mathrm{Id}_{\Sigma_g}\}$
($g>1$) or
$F\in\mathrm{Ham}^1(\mathbb{T}^2,\mu)\setminus\{\mathrm{Id}_{\mathbb{T}^2}\}$,
then for any finitely generated subgroup
$F\in\mathscr{G}\subset\mathrm{Diff}^1_*(\Sigma_g,\mu)$ ($g\geq 1$),
$$\|F^n\|_{\mathscr{G}}\succeq \sqrt{n}.$$
\end{cor}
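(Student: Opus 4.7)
The plan is to reduce Corollary~\ref{cor:sqrt n of growth} to Proposition~\ref{lem:zimmer program}: in each of the two cases we must exhibit distinct fixed points $\widetilde a,\widetilde b$ of $\widetilde F$ and a point $z_{*}\in\mathrm{Rec}^{+}(F)\setminus\pi(\{\widetilde a,\widetilde b\})$ for which $i(\widetilde F;\widetilde a,\widetilde b,z_{*})\neq 0$. Since $F$ is a $C^{1}$-diffeomorphism, the isotopy $I$ automatically satisfies the B-property (hence the WB-property) by the results quoted after Definition~\ref{def:wb property and b property}, so the linking numbers $i(\widetilde F;\widetilde a,\widetilde b,z)$ and the action differences $i_{\mu}(\widetilde F;\widetilde a,\widetilde b)$ are well defined, and both Theorem~\ref{prop:F is not constant if the contractible fixed points is finite} and Theorem~\ref{cor:the symplectic action when M with genus bigger 1} are applicable.

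Next, I would produce the two distinguished fixed points. In the torus case, $F\in\mathrm{Ham}^{1}(\mathbb{T}^{2},\mu)\setminus\{\mathrm{Id}\}$ is $\mu$-Hamiltonian, so Theorem~\ref{prop:F is not constant if the contractible fixed points is finite} furnishes distinct contractible fixed points $a,b$ of $F$ with $L_{\mu}(b)-L_{\mu}(a)\neq 0$; picking arbitrary lifts $\widetilde a,\widetilde b\in\mathrm{Fix}(\widetilde F)$ of $a,b$, Proposition~\ref{clm:L is well defined} then gives $i_{\mu}(\widetilde F;\widetilde a,\widetilde b)=L_{\mu}(b)-L_{\mu}(a)\neq 0$. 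In the hyperbolic case, $F\in\mathrm{Diff}^{1}_{*}(\Sigma_{g},\mu)\setminus\{\mathrm{Id}\}$ with $\mu$ of full support is $\mu$-symplectic, so Theorem~\ref{cor:the symplectic action when M with genus bigger 1} directly yields distinct $\widetilde a,\widetilde b\in\mathrm{Fix}(\widetilde F)$ with $i_{\mu}(\widetilde F;\widetilde a,\widetilde b)\neq 0$.

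The final step is a simple measure-theoretic extraction. By the definition of the action difference,
\[
i_{\mu}(\widetilde F;\widetilde a,\widetilde b)=\int_{M\setminus\pi(\{\widetilde a,\widetilde b\})} i(\widetilde F;\widetilde a,\widetilde b,z)\,\mathrm{d}\mu,
\]
so nonvanishing of this integral forces the integrand to be nonzero on a Borel set $E\subset M\setminus\pi(\{\widetilde a,\widetilde b\})$ of positive $\mu$-measure. Since $\mu$ is a finite $F$-invariant Borel measure, Poincar\'e recurrence gives $\mu(M\setminus\mathrm{Rec}^{+}(F))=0$, so $E\cap\mathrm{Rec}^{+}(F)\neq\emptyset$; any $z_{*}$ chosen from this intersection satisfies the hypothesis of Proposition~\ref{lem:zimmer program}, whence $\|F^{n}\|_{\mathscr G}\succeq\sqrt{n}$.

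I do not expect a genuine obstacle here: the substantive analytic work has already been carried out in Theorems~\ref{prop:F is not constant if the contractible fixed points is finite} and~\ref{cor:the symplectic action when M with genus bigger 1} and in Proposition~\ref{lem:zimmer program}, and the present corollary only splices them together. The one small point to keep an eye on is verifying that $\mu$-a.e. recurrent choice of $z_{*}$ can be made so that the linking number $i(\widetilde F;\widetilde a,\widetilde b,z_{*})$ itself is defined in the sense of Definition~\ref{def:Intersection number density} (not merely the Birkhoff averaged value); but this is already built into the fact that $i_{\mu}(\widetilde F;\widetilde a,\widetilde b)$ is $\mu$-integrable in the three cases reviewed in Section~3.3, so the extraction of $z_{*}$ from $E\cap\mathrm{Rec}^{+}(F)$ is automatic.
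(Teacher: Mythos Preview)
Your reduction to Proposition~\ref{lem:zimmer program} is correct and your measure-theoretic extraction of $z_{*}$ is valid: once $i_{\mu}(\widetilde F;\widetilde a,\widetilde b)\neq 0$, the $\mu$-integrability of $z\mapsto i(\widetilde F;\widetilde a,\widetilde b,z)$ (guaranteed in the $C^{1}$ case by Section~\ref{sec:definition of action function}) together with Poincar\'e recurrence and the absence of atoms yields the desired $z_{*}\in\mathrm{Rec}^{+}(F)\setminus\pi(\{\widetilde a,\widetilde b\})$.

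The paper's route is slightly different in mechanism, though identical in strategy. Rather than invoking Theorems~\ref{prop:F is not constant if the contractible fixed points is finite} and~\ref{cor:the symplectic action when M with genus bigger 1} as black boxes and then extracting $z_{*}$ from the nonvanishing integral, the paper dips back into the \emph{proof} of Theorem~\ref{prop:F is not constant if the contractible fixed points is finite}: it takes $a,b$ as the endpoints of a leaf $\lambda$ of the transverse foliation and $z_{*}$ as a recurrent point on that leaf, using item~\textbf{A2} of that proof to justify directly that $i(\widetilde F;\widetilde a,\widetilde b,z_{*})$ exists and is nonzero. Your argument is more modular and avoids reopening the foliation construction; the paper's gives a concrete $z_{*}$ together with the extra information $z_{*}\notin X$, which is not needed here but becomes relevant later in the proof of Lemma~\ref{lem:zimmer2}.
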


Moreover, we can improve Corollary \ref{cor:sqrt n of growth}. The
following result is our main theorem in this section. \smallskip

\newenvironment{thm02}{\noindent\textbf{Theorem \ref{thm:Fn thicksim
n}}~\itshape}{\par}\smallskip

\begin{thm02}
Let
$F\in\mathrm{Diff}^1_*(\Sigma_g,\mu)\setminus\{\mathrm{Id}_{\Sigma_g}\}$
($g>1$) (resp.
$F\in\mathrm{Ham}^1(\mathbb{T}^2,\mu)\setminus\{\mathrm{Id}_{\mathbb{T}^2}\}$),
and $\mathscr{G}\subset\mathrm{Diff}^1_*(\Sigma_g,\mu)$ ($g>1$)
(resp. $\mathscr{G}\subset\mathrm{Ham}^1(\mathbb{T}^2,\mu)$) be a
finitely generated subgroup containing $F$, then
$$\|F^n\|_{\mathscr{G}}\thicksim n.$$
As a consequence, the groups $\mathrm{Diff}^1_*(\Sigma_g,\mu)$
($g>1$) and $\mathrm{Ham}^1(\mathbb{T}^2,\mu)$ have no
distortions.\end{thm02}\smallskip

Theorem \ref{thm:Fn thicksim n} can be obtained immediately from the
following two lemmas which will be proved in Appendix.

\begin{lem}\label{lem:zimmer1}
If
$F\in\mathrm{Homeo}_*(\Sigma_g,\mu)\setminus\mathrm{Hameo}(\Sigma_g,\mu)$
($g>1$), for any finitely generated subgroup
$F\in\mathscr{G}\subset\mathrm{Homeo}_*(\Sigma_g,\mu)$, we have
$\|F^n\|_{\mathscr{G}}\thicksim n.$
\end{lem}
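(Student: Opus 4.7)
The plan is to leverage the rotation vector as a group homomorphism from $\mathrm{Homeo}_*(\Sigma_g,\mu)$ to $H_1(\Sigma_g,\mathbb{R})$. Since $g>1$, the fundamental group $\pi_1(\mathrm{Homeo}_*(\Sigma_g))$ is trivial, so every $G\in\mathrm{Homeo}_*(\Sigma_g,\mu)$ admits an isotopy from $\mathrm{Id}_{\Sigma_g}$ to $G$ which is unique up to homotopy with fixed endpoints. Consequently $\rho_{\Sigma_g,I}(\mu)$ depends only on the time-one map $G$ and not on the choice of isotopy, and the morphism property $\rho_{M,II'}(\mu)=\rho_{M,I}(\mu)+\rho_{M,I'}(\mu)$ recalled in the excerpt promotes $\rho(\,\cdot\,,\mu)$ to a well-defined group homomorphism $\mathrm{Homeo}_*(\Sigma_g,\mu)\to H_1(\Sigma_g,\mathbb{R})$.

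First I would unpack the hypothesis $F\notin\mathrm{Hameo}(\Sigma_g,\mu)$. By the definition of $\mu$-Hamiltonian given in the introduction, together with the isotopy uniqueness above, this forces $\rho(F,\mu)\neq 0$. Fix any norm $\|\cdot\|$ on $H_1(\Sigma_g,\mathbb{R})$. Let $\{g_1,\ldots,g_s\}$ be a generating set for $\mathscr{G}$ and set $C=\max_{1\leq i\leq s}\|\rho(g_i,\mu)\|$. Writing a minimal-length expression $F^n=g_{i_1}^{\epsilon_1}\cdots g_{i_k}^{\epsilon_k}$ with $k=\|F^n\|_{\mathscr{G}}$ and applying the homomorphism together with the triangle inequality, I obtain
\[
n\,\|\rho(F,\mu)\|=\|\rho(F^n,\mu)\|=\Bigl\|\sum_{j=1}^{k}\epsilon_j\,\rho(g_{i_j},\mu)\Bigr\|\leq k\,C,
\]
which gives $\|F^n\|_{\mathscr{G}}\succeq n$ since $\|\rho(F,\mu)\|>0$.

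The matching upper bound is immediate: concatenating $n$ copies of a fixed $\mathscr{G}$-expression for $F$ yields $\|F^n\|_{\mathscr{G}}\leq n\,\|F\|_{\mathscr{G}}\preceq n$. Combining the two estimates delivers $\|F^n\|_{\mathscr{G}}\thicksim n$. I do not foresee any substantive technical obstruction; the only delicate point is to justify that the morphism property of $\rho_{M,\cdot}(\mu)$, which a priori lives on the group $\mathscr{H}_*(\Sigma_g,\mu)$ of homotopy classes of identity isotopies, descends to a homomorphism on $\mathrm{Homeo}_*(\Sigma_g,\mu)$ itself. This is precisely what the vanishing of $\pi_1(\mathrm{Homeo}_*(\Sigma_g))$ (for $g>1$) provides, so the descent is automatic in our setting.
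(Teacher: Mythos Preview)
Your proposal is correct and follows essentially the same approach as the paper: both arguments use that $F\notin\mathrm{Hameo}(\Sigma_g,\mu)$ forces $\rho_{\Sigma_g,I}(\mu)\neq 0$, apply the morphism property of the rotation vector together with a norm on $H_1(\Sigma_g,\mathbb{R})$ and the triangle inequality to bound $n\|\rho(F,\mu)\|$ by $\|F^n\|_{\mathscr{G}}$ times the maximum rotation-vector norm of the generators, and then note the trivial upper bound $\|F^n\|_{\mathscr{G}}\leq n\|F\|_{\mathscr{G}}$. Your explicit remark that the descent from $\mathscr{H}_*(\Sigma_g,\mu)$ to $\mathrm{Homeo}_*(\Sigma_g,\mu)$ is licensed by $\pi_1(\mathrm{Homeo}_*(\Sigma_g))=0$ is exactly what the paper uses implicitly when it speaks of ``the'' isotopy $I_i$ associated to each generator.
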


\begin{lem}\label{lem:zimmer2}
If
$F\in\mathrm{Ham}^1(\Sigma_g,\mu)\setminus\{\mathrm{Id}_{\Sigma_g}\}$
($g\geq1$), for any finitely generated subgroup
$F\in\mathscr{G}\subset\mathrm{Diff}^1_*(\Sigma_g,\mu)$, we have
$\|F^n\|_{\mathscr{G}}\thicksim n.$
\end{lem}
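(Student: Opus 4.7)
\medskip

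\noindent\textbf{Proof plan for Lemma~\ref{lem:zimmer2}.} The upper bound $\|F^n\|_{\mathscr{G}}\preceq n$ is immediate: augmenting any finite generating set of $\mathscr{G}$ with $F$ makes $F^n$ a word of length $n$. The whole content is the lower bound $\|F^n\|_{\mathscr{G}}\succeq n$. The strategy is to exhibit the lifted action width $\widetilde{\mathrm{width}}$ as a ``quasi-norm'' on $\mathscr{G}$ which grows linearly along powers of $F$ (via Proposition~\ref{prop:ggeq1 no torsion}) and grows at most linearly with the word length in $\mathscr{G}$.

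First, fix a Hamiltonian isotopy $I$ from $\mathrm{Id}_{\Sigma_g}$ to $F$. Since $F\in\mathrm{Diff}^1$, $I$ satisfies the B-property; hence by Proposition~\ref{prop:ggeq1 no torsion} the iterate $I^n$ satisfies $\mathrm{width}(I^n)\succeq n$, and by Proposition~\ref{clm:L is well defined} this coincides with $\widetilde{\mathrm{width}}(I^n)$. Next, fix a symmetric generating set $\{g_1^{\pm1},\dots,g_s^{\pm1}\}$ of $\mathscr{G}$ (with $F^{\pm1}$ among the $g_i$), and choose for each $g_i$ an isotopy $J_i\in\mathscr{I}\mathscr{S}_*(\Sigma_g,\mu)$ joining $\mathrm{Id}$ to $g_i$; each $J_i$ satisfies the B-property because $g_i\in\mathrm{Diff}^1$, and $\widetilde{\mathrm{width}}(J_i)$ is finite. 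Writing $F^n=g_{i_1}\cdots g_{i_k}$ with $k=\|F^n\|_{\mathscr{G}}$, let $J=J_{i_1}\cdots J_{i_k}$ be the concatenated isotopy joining $\mathrm{Id}$ to $F^n$.

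The key technical step is a quasi-subadditivity estimate: there exists a constant $K=K(\{J_i\})$ such that, for every product $J=J_{i_1}\cdots J_{i_k}$ of generating isotopies, $\widetilde{\mathrm{width}}(J)\le k\bigl(\max_i\widetilde{\mathrm{width}}(J_i)+K\bigr)$. To establish this, I would analyse how the lifted action $l_\mu$ transforms under concatenation. Given fixed points $\widetilde{a},\widetilde{b}$ of the time-one map of $J_1J_2$, one decomposes the linking integrand $i(\widetilde{G_1G_2};\widetilde{a},\widetilde{b},z)$ for a positively recurrent $z$ by splitting the isotopy trajectory at the half-time mark, and uses the cocycle relation of Corollary~\ref{clm:I is coboundary} together with Proposition~\ref{prop:alpha- one exists then other exists} to reduce to a sum of contributions, each controlled by $\widetilde{\mathrm{width}}(J_1)$ or $\widetilde{\mathrm{width}}(J_2)$, plus a bounded error coming from the mismatch between fixed points of $G_1G_2$ and those of the individual factors. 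Iterating this bound $k-1$ times yields the advertised estimate $\widetilde{\mathrm{width}}(J)\preceq k$.

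It remains to compare $\widetilde{\mathrm{width}}(J)$ with $\widetilde{\mathrm{width}}(I^n)$. When $g>1$, Proposition~\ref{prop:invariant in homotopic sense} together with $\pi_1(\mathrm{Homeo}_*(\Sigma_g))=0$ implies that $J$ and $I^n$ are homotopic with fixed endpoints, so the two widths agree. When $g=1$, both $J$ and $I^n$ have time-one map $F^n\in\mathrm{Ham}$, hence $\rho_{\mathbb{T}^2,J}(\mu)=\rho_{\mathbb{T}^2,I^n}(\mu)=0$; since $\pi_1(\mathrm{Homeo}_*(\mathbb{T}^2))\simeq\mathbb{Z}^2$ is detected exactly by the rotation vector, $J$ and $I^n$ are again homotopic with fixed endpoints, and $\widetilde{\mathrm{width}}(J)=\widetilde{\mathrm{width}}(I^n)$. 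Chaining the estimates gives
\[
n\;\preceq\;\widetilde{\mathrm{width}}(I^n)\;=\;\widetilde{\mathrm{width}}(J)\;\preceq\;k\;=\;\|F^n\|_{\mathscr{G}},
\]
which is the desired linear lower bound.

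The main obstacle is the quasi-subadditivity step. In the smooth symplectic world one would invoke the triangle inequality for Floer-theoretic spectral invariants coming from the pair-of-pants product, but here $\widetilde{\mathrm{width}}$ is constructed purely through the transverse-foliation/linking-number machinery, so the estimate must be obtained by a direct geometric decomposition of the trajectories along the concatenation. Verifying that the cross-term errors remain bounded independently of $k$ (and depend only on the fixed set of generators) is the delicate point that drives the argument.
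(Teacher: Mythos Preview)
Your overall architecture---lower-bound the word length by a quantity that grows linearly along $\{F^n\}$ and at most linearly in word length---is the right shape, but the specific quantity you chose, $\widetilde{\mathrm{width}}$, does not do the job, and the quasi-subadditivity step is a genuine gap, not just a delicate point.

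There are two concrete obstructions. First, the generators $g_i$ lie only in $\mathrm{Diff}^1_*(\Sigma_g,\mu)$, not in $\mathrm{Ham}^1$; for $g>1$ a non-Hamiltonian $J_i$ has $i_\mu(\widetilde{G_i};\widetilde{a},\alpha(\widetilde{a}))=\varphi(\alpha)\wedge\rho_{M,J_i}(\mu)$ unbounded in $\alpha$, so $\widetilde{\mathrm{width}}(J_i)=+\infty$ and your inequality is vacuous. Second, and more fundamentally, the decomposition you sketch cannot work: a fixed point $\widetilde{a}$ of $\widetilde{G_1G_2}$ is typically not fixed by either $\widetilde{G_1}$ or $\widetilde{G_2}$, so there is no value $l_\mu(J_1;\cdot)$ or $l_\mu(J_2;\cdot)$ to split $l_\mu(J_1J_2;\widetilde{a})$ into. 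In Polterovich's Floer setting the triangle inequality comes from the pair-of-pants product on spectral invariants, a structure with no analogue for the linking-number action used here.

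The paper avoids this entirely by \emph{not} using the width. Instead it fixes a single recurrent point $z_*\in\mathrm{Rec}^+(F)\setminus X$ and two points $a,b\in X$ coming from the transverse foliation (Theorem~\ref{thm:O.Jaulent}), with $i(\widetilde{F};\widetilde{a},\widetilde{b},z_*)\neq0$. Since this linking number satisfies the exact iteration law $i(\widetilde{F}^n;\widetilde{a},\widetilde{b},z_*)=n\,i(\widetilde{F};\widetilde{a},\widetilde{b},z_*)$, it suffices to show $|i(\widetilde{F}^n;\widetilde{a},\widetilde{b},z_*)|\le c_0'\,N(n)$ for a word $F^n=g_{i_{N(n)}}\cdots g_{i_1}$. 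This is done by an explicit geometric estimate on $L_m(\widetilde{F}^n;\widetilde{a},\widetilde{b},z_*)$: one writes the lifted isotopy $\widetilde{I}^{(n)}$ as a concatenation of $N(n)$ pieces, controls the intersection with the reference arc $\widetilde{\gamma}$ via the immersed squares $A_k$, and crucially uses that the alternative isotopy $(I'_Y)^n$ fixing $a,b$ is homotopic to $I^{(n)}$ in $M\setminus\{a,b\}$ (which forces the number of deck translates met by the trajectory to be $O(N(n))$ rather than $O(N(n)^2)$). This pointwise approach sidesteps both the infinite-width problem and the fixed-point-composition problem, since $\widetilde{a},\widetilde{b},z_*$ are held fixed throughout and the estimate is on trajectory geometry, not on action values.
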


As a consequence of Theorem \ref{thm:Fn thicksim n}, we have the
following theorem:

\begin{thm}\label{thm:zimmer1}
Let $\mathscr{G}$ be a finitely generated group with generators
$\{g_1,\ldots,g_s\}$ and $f\in\mathscr{G}$ be an element which is
distorted with respect to the word norm on $\mathscr{G}$. Then
$\phi(f)=\mathrm{Id}_{\mathbb{T}^2}$ (resp.
$\phi(f)=\mathrm{Id}_{\Sigma_g}$ where $g>1$) for any homomorphism
$\phi: \mathscr{G}\rightarrow \mathrm{Ham}^1(\mathbb{T}^2,\mu)$
(resp. $\phi: \mathscr{G}\rightarrow
\mathrm{Diff}^1_*(\Sigma_g,\mu)$ with $g>1$). In particular, if
$\mathscr{G}$ is a finitely generated subgroup of
$\mathrm{Ham}^1(\mathbb{T}^2,\mu)$ (resp.
$\mathrm{Diff}^1_*(\Sigma_g,\mu)$ with $g>1$), every element of
$\mathscr{G}\setminus\{\mathrm{Id}_{\Sigma_g}\}$ ($g\geq1$) is
undistorted with respect to the word norm on $\mathscr{G}$.
\end{thm}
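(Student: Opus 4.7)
The plan is to deduce this from Theorem \ref{thm:Fn thicksim n} by a short contradiction argument, exploiting the monotonicity of word length under a homomorphism. Suppose $f\in\mathscr{G}$ is distorted, so that $f$ has infinite order and $\liminf_{n\to+\infty}\|f^n\|_{\mathscr{G}}/n=0$, and let $\phi:\mathscr{G}\to\mathrm{Ham}^1(\mathbb{T}^2,\mu)$ (resp.\ $\mathrm{Diff}^1_*(\Sigma_g,\mu)$ with $g>1$) be a homomorphism. Assume toward a contradiction that $\phi(f)\neq\mathrm{Id}$.

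The key elementary observation is that $\phi(\mathscr{G})$ is a finitely generated subgroup of the target, with generating set $\{\phi(g_1),\ldots,\phi(g_s)\}$, and that any word of length $\ell$ in $\{g_1^{\pm 1},\ldots,g_s^{\pm 1}\}$ representing $f^n$ gives, after applying $\phi$, a word of the same length $\ell$ in $\{\phi(g_1)^{\pm 1},\ldots,\phi(g_s)^{\pm 1}\}$ representing $\phi(f)^n=\phi(f^n)$. Hence
\[
\|\phi(f)^n\|_{\phi(\mathscr{G})}\;\leq\;\|f^n\|_{\mathscr{G}}\qquad\text{for every }n\geq 1,
\]
so the distortion hypothesis on $f$ is inherited by $\phi(f)$ inside $\phi(\mathscr{G})$.

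Next I would rule out the two possible behaviors of $\phi(f)$. If $\phi(f)$ had finite order $k\geq 2$, then $\phi(f)^k=\mathrm{Id}$ would contradict Corollary \ref{cor:no torsion}, which asserts that $\mathrm{Hameo}(\mathbb{T}^2,\mu)\supset\mathrm{Ham}^1(\mathbb{T}^2,\mu)$ and $\mathrm{Homeo}_*(\Sigma_g,\mu)\supset\mathrm{Diff}^1_*(\Sigma_g,\mu)$ ($g>1$) are torsion free. Since we assumed $\phi(f)\neq\mathrm{Id}$, the element $\phi(f)$ must therefore have infinite order, and $\phi(\mathscr{G})$ is a finitely generated subgroup of $\mathrm{Ham}^1(\mathbb{T}^2,\mu)$ (resp.\ $\mathrm{Diff}^1_*(\Sigma_g,\mu)$) containing the non-identity element $\phi(f)$. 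Applying Theorem \ref{thm:Fn thicksim n} to $\phi(f)\in\phi(\mathscr{G})$ yields $\|\phi(f)^n\|_{\phi(\mathscr{G})}\succeq n$; combined with the inequality above this gives $\|f^n\|_{\mathscr{G}}\succeq n$, contradicting the distortion of $f$. The final assertion of the theorem is then the special case in which $\mathscr{G}$ is itself a finitely generated subgroup of the target and $\phi$ is the inclusion.

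No serious obstacle is anticipated: the proof is essentially a direct repackaging of Theorem \ref{thm:Fn thicksim n}, with the trivial monotonicity of word length under a homomorphism used to transport the conclusion back to $\mathscr{G}$, and Corollary \ref{cor:no torsion} needed only to dispose of the torsion possibility for $\phi(f)$ (which would otherwise make Theorem \ref{thm:Fn thicksim n} inapplicable because of the ``infinite order'' clause in the definition of distortion).
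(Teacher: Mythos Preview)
Your argument is correct and follows essentially the same route as the paper: pass to $\phi(\mathscr{G})$ with generators $\phi(g_i)$, use the trivial inequality $\|\phi(f)^n\|_{\phi(\mathscr{G})}\le\|f^n\|_{\mathscr{G}}$ to transport the distortion condition, and then contradict Theorem~\ref{thm:Fn thicksim n}. The only difference is that your detour through Corollary~\ref{cor:no torsion} is unnecessary: Theorem~\ref{thm:Fn thicksim n} already applies to \emph{any} $F\neq\mathrm{Id}$ and yields $\|F^n\|\succeq n$, which is violated along the distortion subsequence regardless of whether $\phi(f)$ has finite or infinite order; in other words, you never need to check that $\phi(f)$ is itself ``distorted'' in the technical sense, only that $\liminf\|\phi(f)^n\|/n=0$.
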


\begin{proof}We only prove the case where $\phi: \mathscr{G}\rightarrow
\mathrm{Ham}^1(\mathbb{T}^2,\mu)$ since other cases follow
similarly. Let $\mathscr{G}'$ be the finitely generated group
generated by $\{\phi(g_1),\ldots,\phi(g_s)\}$. As $f$ is a
distortion element of $\mathscr{G}$, there exists a subsequence
$\{n_i\}_{i\geq1}\subset\mathbb{N}$ such that
$$\lim_{i\rightarrow+\infty}\frac{\|\phi^{n_i}(f)\|_{\mathscr{G}'}}{n_i}=
\lim_{i\rightarrow+\infty}\frac{\|\phi(f^{n_i})\|_{\mathscr{G}'}}{n_i}=0
.$$ By Theorem \ref{thm:Fn thicksim n}, we have
$\phi(f)=\mathrm{Id}_{\mathbb{T}^2}$.
\end{proof}\smallskip

Let us recall some results about the irreducible lattice
$\mathrm{SL}(n,\mathbb{Z})$ with $n\geq3$. The lattice
$\mathrm{SL}(n,\mathbb{Z})$ and its any normal subgroup of finite
order have the following properties:
\begin{itemize}
 \item It contains a subgroup isomorphic to the group of upper
  triangular integer valued matrices of order $3$ with $1$ on the
  diagonal (the integer Heisenberg group),
  which tells us the existence of distortion element of every infinite normal subgroup
   of $\mathrm{SL}(n,\mathbb{Z})$
   (see \cite[Proposition 1.7]{P2});
   \item It is almost simple (every normal subgroup is finite or has a finite
  index) which is due to Margulis (Margulis finiteness theorem, see
  \cite{Mar}).
\end{itemize}

Applying these results above and Theorem \ref{thm:zimmer1}, we get
the following result:

\smallskip

\newenvironment{thm04}{\noindent\textbf{Theorem \ref{thm:zimmer2}}~\itshape}{\par}\smallskip

\begin{thm04}Every homomorphism from
$\mathrm{SL}(n,\mathbb{Z})$ ($n\geq3$) to
$\mathrm{Ham}^1(\mathbb{T}^2,\mu)$ or
$\mathrm{Diff}^1_*(\Sigma_g,\mu)$ ($g>1$) is trivial. As a
consequence, every  homomorphism from $\mathrm{SL}(n,\mathbb{Z})$
($n\geq3$) to $\mathrm{Diff}^1(\Sigma_g,\mu)$ ($g>1$) has only
finite images.
\end{thm04}

\begin{proof}Again, we only prove the case where $\phi: \mathscr{G}\rightarrow
\mathrm{Ham}^1(\mathbb{T}^2,\mu)$ since other cases follow
similarly. The following argument is due to Polterovich \cite[Proof
of Theorem 1.6]{P2}. By the first item of properties of
$\mathrm{SL}(n,\mathbb{Z})$, there is a distortion element $f$ in
$\mathrm{SL}(n,\mathbb{Z})$. Apply Theorem \ref{thm:zimmer1} to the
distortion element $f$ of infinite order of
$\mathrm{SL}(n,\mathbb{Z})$. We have that $f$ lies in the kernel of
$\phi$. Note that $\mathrm{Ker}(\phi)$ is an infinite normal
subgroup of $\mathrm{SL}(n,\mathbb{Z})$. By the second item of
properties of $\mathrm{SL}(n,\mathbb{Z})$, $\mathrm{Ker}(\phi)$ has
finite index in $\mathrm{SL}(n,\mathbb{Z})$. Hence the quotient
$\mathrm{SL}(n,\mathbb{Z})/\mathrm{Ker}(\phi)$ is finite. Therefore,
$\phi$ has finite images. Applying Corollary \ref{cor:no torsion},
we get $\phi$ is trivial.

Finally, let us recall a classical result about the mapping class
group $\mathrm{Mod}(S)$, where $S$ is a compact, orientable,
connected surface, possibly with boundary, and
$\mathrm{Mod}(S)=\mathrm{Homeo}^+(S)/\mathrm{Homeo}_*(S)$ is the
isotopy classes of orientation preserving homeomorphisms of $S$ (see
\cite{FM}): any homomorphism $\phi: \Gamma\rightarrow
\mathrm{Mod}(S)$ has finite images where $\Gamma$ is an irreducible
lattice in a semisimple lie group of $\mathbb{R}$-rank at least two.

Applying the results above, we get the last statement: every
homomorphism from $\mathrm{SL}(n,\mathbb{Z})$ ($n\geq3$) to
$\mathrm{Diff}^1(\Sigma_g,\mu)$ ($g>1$) has only finite images,
which is a general conjecture of Zimmer in the special case of
surfaces.
\end{proof}\smallskip

\section{Appendix}\label{Appendix}

\subsection{Proofs of Lemma \ref{rem:linking number on connected set},
Lemma \ref{lem:the linking number on a unbounded set} and Lemma \ref{lem:fps is connected and B property}}\quad\\

Endow the surface $M$ with a Riemannian metric and denote by $d$ the
distance induced by the metric. Lift the Riemannian metric to
$\widetilde{M}$ and write $\widetilde{d}$ for the distance induced
by the metric. Let us recall some properties of
$i(\widetilde{F};\widetilde{z},\widetilde{z}\,')$ defined in Formula
\ref{eq:linking number for two fixed points}, whose proofs can be
found in \cite[page 56]{W2}):
\begin{description}
  \item[(P1)] $i(\widetilde{F};\widetilde{z},\widetilde{z}\,')$ is locally constant
on $(\mathrm{Fix}(\widetilde{F})\times
\mathrm{Fix}(\widetilde{F}))\setminus\widetilde{\Delta}$;
  \item[(P2)] $i(\widetilde{F};\widetilde{z},\widetilde{z}\,')$ is invariant by
covering transformation, that is,
$$\qquad i(\widetilde{F};\alpha(\widetilde{z}),\alpha(\widetilde{z}\,'))=
i(\widetilde{F};\widetilde{z},\widetilde{z}\,')\quad\mathrm{for\,\,\,
every}\,\,\,\alpha\in G;$$
  \item[(P3)] $i(\widetilde{F};\widetilde{z},\widetilde{z}\,')=0$
if $\pi(\widetilde{z})=\pi(\widetilde{z}\,')$;
  \item[(P4)] there exists $K$ such that $i(\widetilde{F};\widetilde{z},\widetilde{z}\,')=0$
if $\widetilde{d}(\widetilde{z},\widetilde{z}\,')\geq K$.
\end{description}
\smallskip

Lemma \ref{lem:the linking number on a unbounded set} immediately
follows from Lemma \ref{rem:linking number on connected set} and P4.
Hence we only need to prove Lemma \ref{rem:linking number on
connected set} and Lemma \ref{lem:fps is connected and B property}.

\begin{proof}[Proof of Lemma \ref{rem:linking number on connected set}]
If $\widetilde{z}\not\in\widetilde{X}$, the conclusion holds
obviously by the continuity (see P1 above) and the connectedness of
$\widetilde{X}$. Suppose now that $\widetilde{z}\in\widetilde{X}$.
Fix a point $\widetilde{a}\in \widetilde{X}$. The linking number
$i(\widetilde{F};\widetilde{a},\cdot)$ will be a constant on each
connected component of $\widetilde{X}\setminus{\widetilde{a}}$. Let
$\widetilde{b}$ and $\widetilde{b}'$, $\widetilde{c}$ and
$\widetilde{c}'$ lie on different components, respectively. Then
$i(\widetilde{F};\widetilde{a},\widetilde{b})=i(\widetilde{F};\widetilde{a},\widetilde{b}')$
and
$i(\widetilde{F};\widetilde{a},\widetilde{c})=i(\widetilde{F};a,\widetilde{c}')$.
We have to prove that
$i(\widetilde{F};\widetilde{a},\widetilde{b})=i(\widetilde{F};\widetilde{a},\widetilde{c})$.
We now fix $\widetilde{b}$. Let $\widetilde{Y}$ be the connected
component of $\widetilde{X}\setminus\{\widetilde{a}\}$ that contains
$\widetilde{c}$. Then $\widetilde{a}$ belongs to the closure of
$\widetilde{Y}$ and hence $\widetilde{Y}\cup\widetilde{a}$ is
connected. Let $\widetilde{Z}$ be the connected component of
$\widetilde{X}\setminus\{\widetilde{b}\}$ that contains
$\widetilde{a}$. So $(\widetilde{Y}\cup\widetilde{a})\cap
\widetilde{Z}\neq \emptyset$. Hence
$\widetilde{c}\in\widetilde{Y}\subset\widetilde{Z}$. We get
$i(\widetilde{F};\widetilde{b},\widetilde{a})=i(\widetilde{F};\widetilde{b},\widetilde{c})$
since $\widetilde{a}$ and $\widetilde{c}$ lie on the same connected
component of $X\setminus{\widetilde{b}}$. Now fix $\widetilde{c}$.
Similarly, we have
$i(\widetilde{F};\widetilde{c},\widetilde{b})=i(\widetilde{F};\widetilde{c},\widetilde{a})$
since $\widetilde{b}$ and $\widetilde{a}$ lie on the same connected
component of $X\setminus{\widetilde{c}}$. Obviously,
$i(\widetilde{F};\widetilde{z},\widetilde{z}\,')$ is symmetrical on
$(\mathrm{Fix}(\widetilde{F})\times
\mathrm{Fix}(\widetilde{F}))\setminus\widetilde{\Delta}$ by the
definition of $i_{\widetilde{I}} (\widetilde{z},\widetilde{z}\,')$.
Therefore, we obtain
$$i(\widetilde{F};\widetilde{a},\widetilde{b})=i(\widetilde{F};\widetilde{b},\widetilde{a})=
i(\widetilde{F};\widetilde{b},\widetilde{c})=i(\widetilde{F};\widetilde{c},\widetilde{b})=
i(\widetilde{F};\widetilde{c},\widetilde{a})=i(\widetilde{F};\widetilde{a},\widetilde{c}).$$
\end{proof}

We now show that Lemma \ref{lem:fps is connected and B property}
holds. Prior to that, we establish the following lemma:

\begin{lem}\label{rem:wb property and connected set}
If $\widetilde{X}$ is a connected subset of
$\mathrm{Fix}(\widetilde{F})$ and $\widetilde{X}$ is not reduced to
a singleton, $I$
satisfies the B-property on $\widetilde{X}$. 
\end{lem}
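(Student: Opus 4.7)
The plan is to split into two cases according to whether $\widetilde{X}$ is bounded in $\widetilde{M}$ or not. If $\widetilde{X}$ is unbounded, Lemma \ref{lem:the linking number on a unbounded set} gives $i(\widetilde{F};\widetilde{a},\widetilde{b})=0$ for every $\widetilde{a}\in\widetilde{X}$ and every $\widetilde{b}\in\mathrm{Fix}(\widetilde{F})\setminus\{\widetilde{a}\}$, so the B-property on $\widetilde{X}$ holds trivially with $N_{\widetilde{X}}=0$. From now on I assume $\widetilde{X}$ is bounded and I fix two distinct points $\widetilde{a}_0,\widetilde{a}_1\in\widetilde{X}$.

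By Lemma \ref{rem:linking number on connected set} there is a constant $c\in\mathbb{Z}$ with $i(\widetilde{F};\widetilde{a},\widetilde{b})=c$ for all distinct $\widetilde{a},\widetilde{b}\in\widetilde{X}$, so pairs inside $\widetilde{X}\times\widetilde{X}$ are uniformly bounded by $|c|$. For $\widetilde{b}\in\mathrm{Fix}(\widetilde{F})\setminus\widetilde{X}$ the same lemma, applied with $\widetilde{z}=\widetilde{b}$ and $\widetilde{z}\,'$ varying on the connected set $\widetilde{X}$, shows that $i(\widetilde{F};\widetilde{b},\widetilde{z}\,')$ is independent of $\widetilde{z}\,'\in\widetilde{X}$; call this common value $c(\widetilde{b})$. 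The problem then reduces to bounding $\sup_{\widetilde{b}\in\mathrm{Fix}(\widetilde{F})\setminus\widetilde{X}}|c(\widetilde{b})|$.

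To finish I would combine properties P1 and P4. First, by P4 there is a uniform constant $K$ such that $c(\widetilde{b})=i(\widetilde{F};\widetilde{b},\widetilde{a}_0)=0$ whenever $\widetilde{d}(\widetilde{b},\widetilde{a}_0)\geq K$; only $\widetilde{b}$ in the compact region $\overline{B(\widetilde{a}_0,K)}\cap\mathrm{Fix}(\widetilde{F})$ matter. Second, using P1, I would pick an open neighborhood $V$ of $\widetilde{a}_0$ disjoint from $\widetilde{a}_1$ on which the locally constant function $\widetilde{b}\mapsto i(\widetilde{F};\widetilde{b},\widetilde{a}_1)$ equals $i(\widetilde{F};\widetilde{a}_0,\widetilde{a}_1)=c$; then for every $\widetilde{b}\in V\cap\mathrm{Fix}(\widetilde{F})\setminus\widetilde{X}$ one has $c(\widetilde{b})=i(\widetilde{F};\widetilde{b},\widetilde{a}_1)=c$. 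On the complementary set $(\overline{B(\widetilde{a}_0,K)}\setminus V)\cap\mathrm{Fix}(\widetilde{F})$, which is compact and avoids $\widetilde{a}_0$, the function $\widetilde{b}\mapsto i(\widetilde{F};\widetilde{b},\widetilde{a}_0)$ is continuous and integer-valued, hence attains only finitely many values and is bounded by some $M_1$. Setting $N_{\widetilde{X}}=\max(|c|,M_1)$ would then yield the B-property.

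The main obstacle I expect is the a priori lack of control of $i(\widetilde{F};\widetilde{b},\widetilde{a}_0)$ when $\widetilde{b}$ approaches $\widetilde{a}_0$ through fixed points outside $\widetilde{X}$, since the pair approaches the diagonal where P1 no longer applies. The trick that circumvents it is that, because $\widetilde{X}$ contains at least two points, one may trade the reference point $\widetilde{a}_0$ for the distant $\widetilde{a}_1$ whenever $\widetilde{b}$ is close to $\widetilde{a}_0$; this swap is legitimized precisely by Lemma \ref{rem:linking number on connected set}, which forces $c(\widetilde{b})$ to be independent of the chosen reference point in $\widetilde{X}$.
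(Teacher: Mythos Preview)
Your proof is correct and uses the same ingredients as the paper's argument: the unbounded/bounded dichotomy (via Lemma~\ref{lem:the linking number on a unbounded set}), the constancy of $i(\widetilde{F};\cdot,\cdot)$ on connected sets (Lemma~\ref{rem:linking number on connected set}), property P4 to reduce to a compact region, and property P1 combined with the ``swap the reference point to $\widetilde{a}_1$'' trick to handle $\widetilde{b}$ near $\widetilde{a}_0$.

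The organization differs slightly. The paper first enlarges $\widetilde{X}$ to the full connected component $\widetilde{X}'$ of $\mathrm{Fix}(\widetilde{F})$ containing it (so that $\widetilde{X}'$ is closed, hence compact in the bounded case) and then argues by contradiction: if a sequence $\widetilde{z}_n\in\mathrm{Fix}(\widetilde{F})\setminus\widetilde{X}'$ had $|i(\widetilde{F};\widetilde{z}_n,\widetilde{z}')|\to\infty$, P4 forces a convergent subsequence, and then P1 together with Lemma~\ref{rem:linking number on connected set} rules out both possibilities $\widetilde{z}_0\in\widetilde{X}'$ and $\widetilde{z}_0\notin\widetilde{X}'$. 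Your version stays with $\widetilde{X}$ itself and builds the bound $N_{\widetilde{X}}$ directly by splitting $\overline{B(\widetilde{a}_0,K)}$ into a small neighborhood $V$ of $\widetilde{a}_0$ (where you read off $c(\widetilde{b})=c$ via $\widetilde{a}_1$) and its complement (where compactness and local constancy at $\widetilde{a}_0$ give a finite bound). Both routes are short; yours is a touch more explicit about the actual constant, while the paper's passage to $\widetilde{X}'$ makes the ``limit point cannot lie in $\widetilde{X}'$'' step cleaner. Substantively they are the same argument.
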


\begin{proof}
Let $\widetilde{X}'$ be a connected component of
$\mathrm{Fix}(\widetilde{F})$ that contains $\widetilde{X}$. By
Lemma \ref{lem:the linking number on a unbounded set}, the
conclusion is obvious if $\widetilde{X}'$ is unbounded. Suppose now
that $\widetilde{X}'$ is bounded. Then $\widetilde{X}'$ is compact.
Let us consider the value
$i(\widetilde{F},\widetilde{z},\widetilde{z}')$ where
$\widetilde{z}\in \mathrm{Fix}(\widetilde{F})$ and
$\widetilde{z}'\in \widetilde{X}'$. By the second statement of Lemma
\ref{rem:linking number on connected set}, we only need to consider
the case where $\widetilde{z}\in
\mathrm{Fix}(\widetilde{F})\setminus\widetilde{X}'$. If there exists
a sequence $\{\widetilde{z}_n\}_{n=1}^{+\infty}\subset
\mathrm{Fix}(\widetilde{F})\setminus\widetilde{X}'$ such that
$|i(\widetilde{F},\widetilde{z}_n,\widetilde{z}')|\rightarrow+\infty$
as $n\rightarrow+\infty$, according to P4, the sequence
$\{\widetilde{z}_n\}$ must have a convergence subsequence. Without
loss of generality, we suppose that
$\lim_{n\rightarrow+\infty}\widetilde{z}_n=\widetilde{z}_0$.
Obviously, $\widetilde{z}_0\not\in \widetilde{X}'$ by the second
statement of Lemma \ref{rem:linking number on connected set}, which
is also impossible in this case since
$\widetilde{d}(\widetilde{z}_0,\widetilde{X}')>0$ and due to the
first statement of Lemma \ref{rem:linking number on connected set}.
\end{proof}

\begin{proof}[Proof of Lemma \ref{lem:fps is connected and B property}]
If $X$ is not contractible, the first property of Lemma \ref{lem:fps
is connected and B property} follows from Lemma \ref{lem:the linking
number on a unbounded set}. Otherwise, it follows from Lemma
\ref{rem:wb property and connected set} and the properties P2--P4 of
$i(\widetilde{F};\widetilde{z},\widetilde{z}\,')$.

Furthermore, we assume that the number of connected components of
$\mathrm{Fix}_{\mathrm{Cont},I}(F)$ is finite. If some connected
components of $\mathrm{Fix}_{\mathrm{Cont},I}(F)$ are singletons,
the second property follows from the properties P2--P4 and the first
property of this lemma has been proved above.
\end{proof}

\subsection{Proof of Lemma \ref{lem:topology lemma of connected set}}\quad\\

To prove Lemma \ref{lem:topology lemma of connected set}, we need
the following lemma:
\begin{lem}\label{lem:homology and homotopy}
Let $S$ and $S'$ be two open connected subsurfaces of an orientable
closed surface $M$, with $S'\subset \mathrm{Int}(S)$ and $z\in S'$.
If $S$ and $S'$ satisfy the following properties:
\begin{itemize}
     \item $i_*(H_1(S,\mathbb{Z}))$ and $i_*(H_1(S',\mathbb{Z}))$ have
the same image in $H_1(M,\mathbb{Z})$;
     \item the number of connected components  with positive genus of $\mathrm{Cl}(M\setminus S)$
     equals to that (i.e., the number of connected components with positive genus) of $\mathrm{Cl}(M\setminus S')$,
     \end{itemize}
then $i_*(\pi_1(S, z))$ and $i_*(\pi_1(S',z))$ have the same image
in $\pi_1(M,z)$.\end{lem}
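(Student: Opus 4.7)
The plan is to establish the nontrivial containment $i_*(\pi_1(S, z)) \subseteq i_*(\pi_1(S', z))$, as the reverse inclusion follows at once from $S' \subset S$. Equivalently, the strategy is to take an arbitrary loop $\gamma$ in $S$ based at $z$ and produce a loop $\gamma'$ in $S'$ based at $z$ which is homotopic to $\gamma$ in $M$.

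First, I would carry out a genus analysis of the complementary regions. Denote by $\{C_j\}$ the connected components of $\mathrm{Cl}(M \setminus S)$ and by $\{D_i\}$ those of $\mathrm{Cl}(M \setminus S')$. Each $C_j$ lies in a unique $D_{i(j)}$, and each $D_i$ is a compact subsurface of $M$ obtained by gluing, along circles in $\partial S$, the collection of $C_j$'s with $i(j)=i$ to the closures of the components $T_k$ of $S \setminus S'$ contained in $D_i$. A positive-genus $C_j$ forces its containing $D_{i(j)}$ to have positive genus, so $j \mapsto i(j)$ restricts to an injection of positive-genus components, which by the count hypothesis is a bijection. Combining this with the hypothesis $i_*(H_1(S,\mathbb{Z})) = i_*(H_1(S',\mathbb{Z}))$, I would then deduce that each $T_k$ is planar: a handle in some $T_k$ would contribute a class to $i_*(H_1(S))$ that could not be realized by any loop in $S'$, contradicting the equality of $H_1$-images.

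Second, I would put $\gamma$ in general position relative to $\partial S' \cap S$ and decompose $\gamma = \alpha_1 \beta_1 \cdots \alpha_n \beta_n$, where each $\alpha_k$ is an arc in $\overline{S'}$ and each $\beta_k$ is an arc in some planar component $T_{k_i}$ with endpoints on $\partial S'$. Using the planarity of $T_{k_i}$, each $\beta_k$ is homotopic rel endpoints inside $\overline{T_{k_i}}$ to a concatenation of arcs running along the boundary components of $T_{k_i}$. Arcs on $\partial S' \cap \partial T_{k_i}$ already sit in $\overline{S'}$; arcs on $\partial S \cap \partial T_{k_i}$ contribute loops whose homology class in $M$ must, by the equality $i_*(H_1(S)) = i_*(H_1(S'))$, already be representable by a loop in $S'$, and this representation can be upgraded to a homotopy in $M$ by exploiting the surface-group structure of $\pi_1(M)$. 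Reassembling yields a loop $\gamma' \subset \overline{S'}$ homotopic to $\gamma$ in $M$, which a small perturbation pushes into $S'$.

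The main obstacle lies in this last step: promoting the $H_1$-equality into a genuine homotopy in $M$ for the boundary-circle arcs arising from $\partial S \cap \partial T_{k_i}$. Planarity of $T_{k_i}$ ensures $\pi_1(T_{k_i})$ is free on loops around its boundary components, giving a combinatorial handle on the arcs $\beta_k$; but translating this freedom into an ambient $M$-homotopy requires the genus hypothesis in an essential way, since without it, loops around components of $\partial C_j$ could carry nontrivial elements of $\pi_1(M)$ beyond what is controlled by homology alone.
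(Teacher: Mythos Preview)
Your overall plan---show each component $T_k$ of $\mathrm{Cl}(S\setminus S')$ is planar, decompose a loop $\gamma\subset S$ into arcs in $S'$ and arcs $\beta_k$ in the $T_k$, then push each $\beta_k$ onto $\partial S'$---is the same shape as the paper's. But the step you yourself flag as the ``main obstacle'' is a genuine gap, and the paper does not try to upgrade an $H_1$-equality to a $\pi_1$-statement at all. Instead it proves two sharper structural facts about each component $S''$ of $\mathrm{Cl}(S\setminus S')$ that make the homotopy automatic.

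First, the paper shows that $S''$ meets $\partial S'$ in \emph{exactly one} circle $C$. This is a direct consequence of the $H_1$ hypothesis: if $S''$ had two distinct boundary circles on $\partial S'$, one could build a loop $r\subset S$ crossing one of them with algebraic intersection number $\pm 1$, and $[r]$ would lie in $i_*(H_1(S,\mathbb{Z}))$ but not in $i_*(H_1(S',\mathbb{Z}))$. You never isolate this fact, and without it your arcs $\beta_k$ could enter and exit $S''$ through different circles of $\partial S'$, which complicates the push considerably.

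Second---and this is where the positive-genus count is actually used---the paper shows that among the boundary circles of $S''$ lying on $\partial S$, at most one bounds a positive-genus piece of $\mathrm{Cl}(M\setminus S)$; all the others bound \emph{disks} $D_1,\dots,D_m\subset M\setminus S$. Filling these disks in, the region $S''\cup D_1\cup\cdots\cup D_m$ is a genus-zero surface whose boundary consists of $C$ and at most one other circle: it is a disk or an annulus. Hence any arc in $S''$ with both endpoints on $C$ is homotopic rel endpoints, \emph{inside this disk or annulus and therefore in $M$}, to an arc on $C$. No appeal to ``surface-group structure'' or to lifting $H_1$-equalities is needed.

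Your attempt to handle the $\partial S$-boundary circles by saying their homology class is realizable in $S'$ and that ``this representation can be upgraded to a homotopy in $M$'' does not work in general: two loops in a surface can be homologous without being freely homotopic. The paper sidesteps this entirely by showing those circles are (with at most one exception) null-homotopic in $M$, because they bound disks.
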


\begin{proof} Let $C$ be a connected component of $\partial S'$ which belongs to
$\partial(\mathrm{Cl}(S\setminus S'))$, more precisely, to the
boundary of a connected component $S''$ of $\mathrm{Cl}(S\setminus
S')$. The genus of $S''$ is zero because $i_*(H_1(S,\mathbb{Z}))$
and $i_*(H_1(S',\mathbb{Z}))$ have the same image in
$H_1(M,\mathbb{Z})$. We claim that $C$ is the unique common
component of $\partial S'$ and $\partial S''$. Otherwise, one can
find a cycle $r$ in $S$ that has a nonzero algebraic intersection
number with $C$, which contradicts with the fact that
$i_*(H_1(S,\mathbb{Z}))$ and $i_*(H_1(S',\mathbb{Z}))$ have the same
image in $H_1(M,\mathbb{Z})$ (see the left, Figure
\ref{fig:topologic lemma}). Secondly, we note that
$i_*(H_1(\mathrm{Cl}(M\setminus S),\mathbb{Z}))$ and
$i_*(H_1(\mathrm{Cl}(M\setminus S'),\mathbb{Z}))$ have the same
image in $H_1(M,\mathbb{Z})$ by the hypotheses. It implies that
$S''$ (in fact, every connected component of $\mathrm{Cl}(S\setminus
S')$) is a subsurface without genus (see the right part of Figure
\ref{fig:topologic lemma} for a counter-example).

\begin{figure}[ht]
\begin{center}\scalebox{0.18}[0.18]{\includegraphics{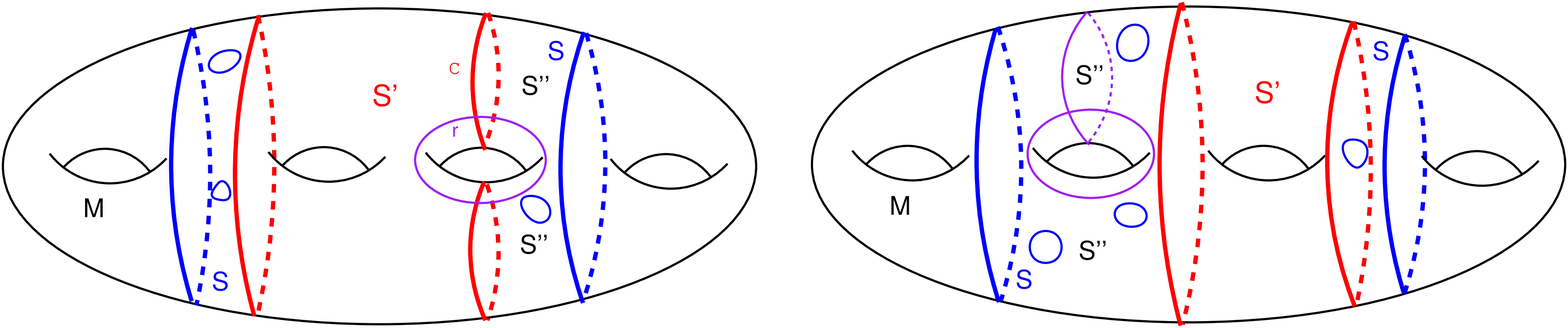}}\end{center}
\caption{
\label{fig:topologic lemma}}
\end{figure}

Furthermore, by the hypotheses, we can deduce that $S''$ (and hence
every connected component of $\mathrm{Cl}(S\setminus S')$) satisfies
the following property: the number of the boundary circles of $S''$
that bound a nonzero genus subsurface of $M\setminus S$ is at most
$1$ (see Figure \ref{fig:topologic lemma 2}, the left situation can
happen but the right one can not). All the other boundary circles of
$S''$ not bounding a nonzero genus subsurface of $M\setminus S$ will
bound disks, say $D_i\, (1\leq i\leq m)$. And each $D_i$ is
contained in $M\setminus S$. It implies that every path in $S''$
whose endpoints are on $C$ is homotopic in $S''\cup (\cup_{i=1}^m
D_i)$ to a path on $C$. Thus, the conclusion follows.

\begin{figure}[ht]
\begin{center}\scalebox{0.3}[0.3]{\includegraphics{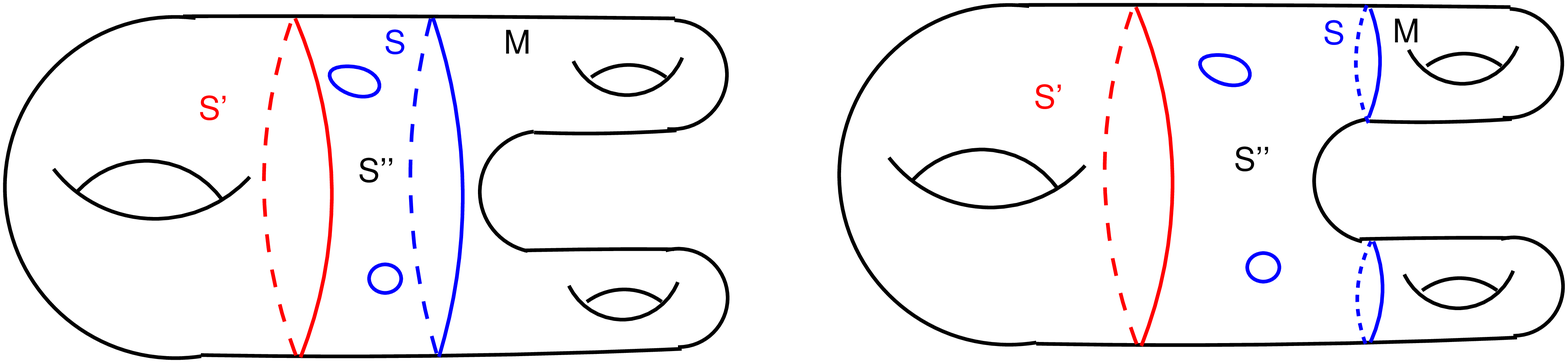}}\end{center}
\caption{
\label{fig:topologic lemma 2}}
\end{figure}
\end{proof}

\begin{proof}[Proof of Lemma \ref{lem:topology lemma of connected set}]
For a small $\epsilon>0$, the inclusion $i:
Z_{\epsilon}\hookrightarrow M$ naturally induces a homomorphism:
$i_*: H_1(Z_\epsilon,\mathbb{Z})\rightarrow H_1(M,\mathbb{Z})$.
Observing that the sequences of subspaces
$\{i_*(H_1(Z_{\epsilon},\mathbb{Z}))\}_{\epsilon>0}$
 and $\{i_*(H_1(M\setminus
Z_{\epsilon},\mathbb{Z}))\}_{\epsilon>0}$ are respectively
non-decreasing and non-increasing in a finite dimensional space
(since $H_1(M,\mathbb{Z})$ is finitely generated), we get that they
must stabilize. Therefore, we can choose a $\epsilon_0>0$ small
enough such that
$$i_*(H_1(Z_{\epsilon},\mathbb{Z}))=
i_*(H_1(Z_{\epsilon_0},\mathbb{Z}))\quad \mathrm{and}\quad
i_*(H_1(\mathrm{Cl}(M\setminus Z_{\epsilon}),\mathbb{Z}))=
i_*(H_1(\mathrm{Cl}(M\setminus Z_{\epsilon_0}),\mathbb{Z}))$$ for
all $0<\epsilon<\epsilon_0$. Furthermore, we can make $\epsilon_0$
even smaller such that for all $0<\epsilon<\epsilon_0$, the
subsurfaces $Z_{\epsilon_0}$ and $Z_{\epsilon}$ satisfy the
hypotheses of Lemma \ref{lem:homology and homotopy} since the genus
of $M$ is finite. The conclusion then follows from Lemma
\ref{lem:homology and homotopy}.
\end{proof}

\subsection{Proofs of Proposition \ref{lem:zimmer program}, Lemma \ref{lem:zimmer1} and Lemma \ref{lem:zimmer2}}

\begin{proof}[Proof of Proposition \ref{lem:zimmer program}]
If
$z_*\in\mathrm{Rec}^+(F)\setminus\pi(\{\widetilde{a},\widetilde{b}\})$
and $i(\widetilde{F};\widetilde{a},\widetilde{b},z_*)$ exists, we
have
$z_*\in\mathrm{Rec}^+(F^n)\setminus\pi(\{\widetilde{a},\widetilde{b}\})$
(see Lemma 19, \cite{W}), and by Proposition 4.5.3 in \cite{W2}, we
have that
$i(\widetilde{F}^n;\widetilde{a},\widetilde{b},z_*)=ni(\widetilde{F};\widetilde{a},\widetilde{b},z_*)$
for all $n\geq1$.\smallskip

Assume that $F\in\mathscr{G}=\langle
F_{1,1},\cdots,F_{s,1}\rangle\subset\mathrm{Diff}^1_*(\Sigma_g,\mu)$
and write $N(n)=\|F^n\|_{\mathscr{G}}$. Then there exist identity
isotopies $I_i=(F_{i,t})_{t\in[0,1]}\subset
\mathrm{Diff^1(\Sigma_g)}$ ($1\leq i\leq s$) such that, for every
$n\geq1$, the isotopy $I^n$ is homotopic to the isotopy
$I^{(n)}:=\left(F_t^{(n)}\right)_{0\leq t\leq
1}=\prod\limits_{j=1}^{N(n)}I^{~\epsilon_j}_{i_j}$, where
$i_j\in\{1,2,\cdots,s\}$, $\epsilon_j\in\{-1,1\}$
($j=1,2,\cdots,N(n)$) and
$$F_t^{(n)}(z)=F^{\epsilon_{i_k}}_{i_k,N(n)t-(k-1)}(F^{\epsilon_{i_{k-1}}}_{i_{k-1},1},\circ
\cdots\circ F^{\epsilon_{i_1}}_{i_1,1}(z)),\quad \mathrm{if} \quad
\frac{k-1}{N(n)}\leq t\leq \frac{k}{N(n)}.$$

Let $\widetilde{I}_i=(\widetilde{F}_{i,t})_{t\in[0,1]}$ ($1\leq
i\leq s$) and $\widetilde{I}^{(n)}=(\widetilde{F}_t^{(n)})_{0\leq
t\leq 1}$ be the lifts of $I_i$ ($1\leq i\leq s$) and
$(F_t^{(n)})_{0\leq t\leq 1}$ to $\widetilde{M}$ respectively.
Identify the sphere $\widetilde{M}\cup\{\infty\}$ as the Riemann
sphere $\mathbb{C}\cup\{\infty\}$. Again, for simplicity,
we can suppose that $\widetilde{a}=0$ and
$\widetilde{b}=1$.\smallskip

Fix $n\geq1$. Using the method of Lemma \ref{rem:identity isotopies
fix three points on sphere} (see \cite{W2}, page 57), we can get the
isotopy $\widetilde{I}'^{(n)}=(\widetilde{F}\,'^{(n)}_t)_{0\leq
t\leq 1}$ which fixes $0$, $1$ and is an isotopy on $\widetilde{M}$
from $\mathrm{Id}_{\widetilde{M}}$ to $\widetilde{F}^n$, where

\begin{equation}\label{eq:new isotopy}
\widetilde{F}\,'^{(n)}_t(\widetilde{z})=\frac{\widetilde{F}_t^{(n)}(\widetilde{z})
-\widetilde{F}_t^{(n)}(0)}{\widetilde{F}_t^{(n)}(1)-\widetilde{F}_t^{(n)}(0)}\,.
\end{equation}

Let $\widetilde{\gamma}=\{0\leq r\leq 1\}$ be the straight line from
$0$ to $1$. If
$\widetilde{I}'^{(n)}(\widetilde{z})\cap\widetilde{\gamma}\neq\emptyset$
for some point $\widetilde{z}\in \widetilde{M}\setminus\{0,1\}$,
then there exist $t_0\in[0,1]$ and $r_0\in]0,1[$ such that
$\widetilde{F}\,'^{(n)}_{t_0}(\widetilde{z})=r_0$, that is
\begin{equation}\label{eq:zimmer}
    \widetilde{F}_{t_0}^{(n)}(\widetilde{z}) -\widetilde{F}_{t_0}^{(n)}(0)=r_0
(\widetilde{F}_{t_0}^{(n)}(1) -\widetilde{F}_{t_0}^{(n)}(0)).
\end{equation}

Let
$$C=\max_{i\in\{1,\cdots,s\}}\sup_{t\in[0,1]\,,\,\widetilde{z}\in\widetilde{M}}
\widetilde{d}(\widetilde{F}_{i,t}(\widetilde{z}),\widetilde{z}).$$
We have
\begin{equation*}\label{ineq:Ak}
\left|\widetilde{F}_t^{(n)}(1)-\widetilde{F}_t^{(n)}(0)\right|\leq
2CN(n)+1
\end{equation*}
and
$$\left|\widetilde{F}_t^{(n)}(\widetilde{z})-\widetilde{F}_t^{(n)}(0)\right|\geq
|\widetilde{z}|-2CN(n)$$ for all $t\in[0,1]$. Hence when
$|\widetilde{z}|\geq 5CN(n)+1$, we get
$\left|\widetilde{F}\,'^{(n)}_t(\widetilde{z})\right|>1$, i.e.,
$\widetilde{I}'^{(n)}(\widetilde{z})\cap\widetilde{\gamma}=\emptyset$.
Recall that the open disks $\widetilde{V}$ and $\widetilde{W}$ that
contain $\infty$ in Section 4.6.1 of \cite{W2}. Here, we set
$\widetilde{V}=\{\widetilde{z}\in\widetilde{M}\mid|\widetilde{z}|>
5CN(n)+1\}$ and choose an open disk $\widetilde{W}$ containing
$\infty$ such that $\widetilde{\gamma}\cap \widetilde{W}=\emptyset$,
and for every $\widetilde{z}\in \widetilde{V}$, we have
$\widetilde{I}'^{(n)}(\widetilde{z})\subset \widetilde{W}$. Without
loss of generality, we can suppose that $z_*\notin
\pi(\widetilde{\gamma})$. Choose an open disk $U$ containing $z_*$
such that $U\cap \pi(\widetilde{\gamma})=\emptyset$. Write
respectively $\tau(n,z)$ and $\Phi_n(z)$ for the first return time
and the first return map of $F^n$ throughout this proof. For every
$m\geq1$, recall that
$\tau_m(n,z)=\sum\limits_{i=0}^{m-1}\tau(n,\Phi^i_n(z))$. Let us
consider the following value
$$L_m(\widetilde{F}^n;0,1,z_*)=\widetilde{\gamma}\wedge\widetilde{\Gamma}^m_{\widetilde{I}'^{(n)},z_*}.$$

By the same arguments with Lemma 4.6.4 and Lemma 4.6.6 in \cite{W2},
we can find multi-paths $\widetilde{\Gamma}^m_i(z_*)$ ($1\leq i \leq
P_m(z_*)$) from $\widetilde{V}$ to $\widetilde{V}$  such that
$$L_m(\widetilde{F}^n;0,1,z_*)=\widetilde{\gamma}\wedge\prod_{1\leq i\leq
P_m(z_*)}\widetilde{\Gamma}^m_i(z_*).$$
\smallskip

For every $j\in\{1,\cdots,s\}$ and
$(\widetilde{z},\widetilde{z}\,')\in\widetilde{M}\times\widetilde{M}\setminus\widetilde{\Delta}$,
there is a unique function $\theta_j: [0,1]\rightarrow \mathbb{R}$
such that $\theta_j(0)=0$ and
$$e^{2i\pi\theta_j(t)}=\frac{\widetilde{F}_{j,t}(\widetilde{z})-\widetilde{F}_{j,t}(\widetilde{z}\,')}
{\left|\widetilde{F}_{j,t}(\widetilde{z})-\widetilde{F}_{j,t}(\widetilde{z}\,')\right|}.$$
Let $\lambda_j(\widetilde{z},\widetilde{z}\,')=\theta_j(1)$. As
$\widetilde{I}_j\subset \mathrm{Diff}^1(\widetilde{M})$, there is a
natural compactification of
$\widetilde{M}\times\widetilde{M}\setminus\widetilde{\Delta}$
obtained by replacing the diagonal $\widetilde{\Delta}$ with the
unit tangent bundle such that the map $\lambda_j$ extends
continuously (see, for example, \cite[page 81]{CFGL}). Let

\begin{equation}\label{eq:C_1}
    C_1=\max_{i\in\{1,\cdots,s\}}\sup_{(\widetilde{z},\widetilde{z}\,')
\in\widetilde{M}\times\widetilde{M}\setminus\widetilde{\Delta}}
|\lambda_j(\widetilde{z},\widetilde{z}\,')|.
\end{equation}

Suppose that $\widetilde{M}_0\subset\widetilde{M}$ is a closed
fundamental domain with regard to the transformation group $G$.
Denote by $\widetilde{I}^{\pm}_i(\widetilde{M}_0)$ the set
$\{\widetilde{F}_{i,\pm t}(\widetilde{z})\mid (\pm
t,\widetilde{z})\in [0,1]\times\widetilde{M}_0\}$ where
$\widetilde{F}_{i,-t}=\widetilde{F}_{i,1-t}\circ\widetilde{F}^{-1}_{i,1}$.
As $\widetilde{F}_{i,\pm
t}\circ\alpha=\alpha\circ\widetilde{F}_{i,\pm t}$ for all $\alpha\in
G$ and $t\in[0,1]$, we can suppose that
$$C_2=\max_{i\in\{1,\cdots,s\},\,z\in M}\sharp\{\widetilde{z}\in
\pi^{-1}(z)\mid \widetilde{I}_i(\widetilde{z})\cap
\widetilde{I}^{+}_i(\widetilde{M}_0)\cup
\widetilde{I}^{-1}_i(\widetilde{z})\cap
\widetilde{I}^{-}_i(\widetilde{M}_0)\neq\emptyset\},$$ which is
independent of $n$. \bigskip

For every $0\leq j\leq m-1$, $\tau_j(n,z_*)\leq l<\tau_{j+1}(n,z_*)$
and $1\leq k\leq N(n)$, let
$F^{\epsilon_{i_0}}_{i_{0},1}=\mathrm{Id}_{\Sigma_g}$,
$\widetilde{F}^{~\epsilon_{i_{0}}}_{i_{0},1}=\mathrm{Id}_{\widetilde{M}}$,

$$z_{j,l,k}=F^{~\epsilon_{i_{k-1}}}_{i_{k-1},1}(F^{~\epsilon_{i_{k-2}}}_{i_{k-2},1}\circ\cdots\circ
F^{~\epsilon_{i_{1}}}_{i_1,1}(F^{n(l-\tau_j(n,z_*))}(\Phi_n^j(z_*))))$$
and
$$\widetilde{z}^{\,0}_{k}=\widetilde{F}^{~\epsilon_{i_{k-1}}}_{i_{k-1},1}(\widetilde{F}^{~\epsilon_{i_{k-2}}}_{i_{k-2},1}\circ\cdots\circ \widetilde{F}^{~\epsilon_{i_{1}}}_{i_1,1}(0))
,\quad\widetilde{z}^1_{k}=\widetilde{F}^{~\epsilon_{i_{k-1}}}_{i_{k-1},1}(\widetilde{F}^{~\epsilon_{i_{k-2}}}_{i_{k-2},1}\circ\cdots\circ
\widetilde{F}^{~\epsilon_{i_{1}}}_{i_1,1}(1)).$$

When $\frac{k-1}{N(n)}\leq t\leq \frac{k}{N(n)}$, recall that
\begin{equation*}\widetilde{F}\,'^{(n)}_t(\widetilde{z})=\frac{\widetilde{F}^
{~\epsilon_{i_{k}}}_{i_k,N(n)t-(k-1)}(\widetilde{z})-\widetilde{F}^{~\epsilon_
{i_{k}}}_{i_k,N(n)t-(k-1)}(\widetilde{z}^{\,0}_{k})}
{\widetilde{F}^{~\epsilon_{i_{k}}}_{i_k,N(n)t-(k-1)}(\widetilde{z}^{\,1}_{k})-
\widetilde{F}^{~\epsilon_{i_{k}}}_{i_k,N(n)t-(k-1)}(\widetilde{z}^{\,0}_{k})}.\end{equation*}

For every $\widetilde{z}\in\widetilde{M}$, denote by
$\widetilde{J}_k(\widetilde{z})$ the curve
$$\widetilde{J}_k(\widetilde{z})=\left(\widetilde{F}\,'^{(n)}_t(\widetilde{z})\right)_{\frac{k-1}{N(n)}\leq
t\leq \frac{k}{N(n)}}.$$

For every $k$, define the immersed square
\begin{eqnarray*}
 A_k: [0,1]^2 &\rightarrow& \widetilde{M}\nonumber\\
  (t,r)&\mapsto& \widetilde{F}^{~\epsilon_{i_{k}}}_{i_k,t}(~\widetilde{z}_k^{\,0})+r(\widetilde{F}^{~\epsilon_{i_{k}}}_{i_k,t}(\widetilde{z}_k^1)-\widetilde{F}^{~\epsilon_{i_{k}}}_{i_k,t}
(\widetilde{z}_k^{\,0})).
\end{eqnarray*}
\begin{figure}[ht]
\begin{center}\scalebox{0.5}[0.5]{\includegraphics{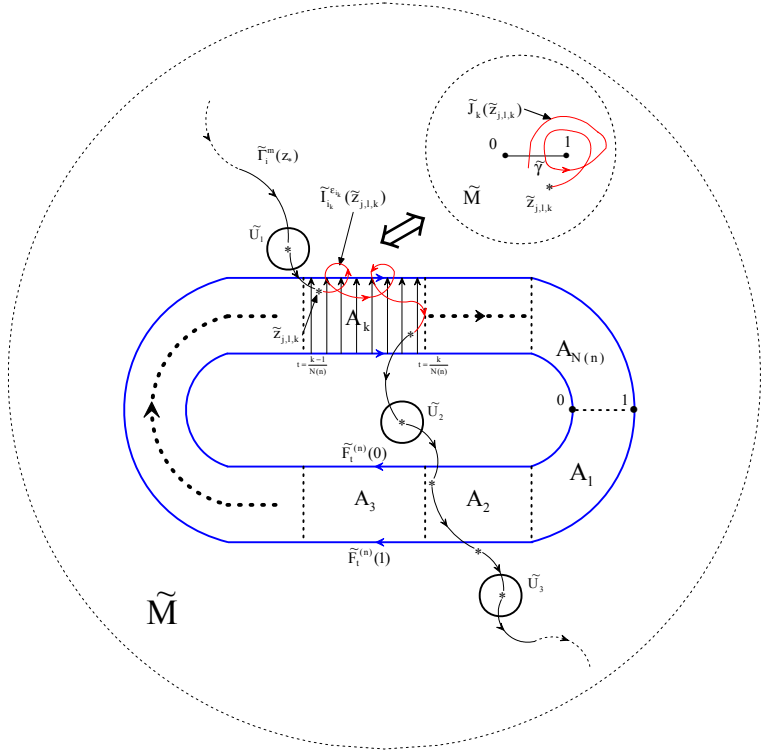}}\end{center}
\caption{The proof of Proposition \ref{lem:zimmer
program}\label{fig:Zimmer-program-1}}
\end{figure}

By Equality \ref{eq:zimmer}, we know that
$\widetilde{J}_k(\widetilde{z})\cap\widetilde{\gamma}\neq\emptyset$
implies $\widetilde{I}_{i_k}^{~\epsilon_{i_{k}}}(\widetilde{z})\cap
A_k\neq\emptyset$ (see Figure \ref{fig:Zimmer-program-1}
\footnote{In Figure \ref{fig:Zimmer-program-1}, there are two
universal covers that the curves in $\widetilde{M}$ (the big one) is
generated by the isotopy $\widetilde{I}^{(n)}$,
 and that the curve in $\widetilde{M}$ (the small one) is generated by $\widetilde{J}_k$ (and hence by the
isotopy $\widetilde{I}'^{(n)}$ defined by Formula \ref{eq:new
isotopy}).}). Let
$$C_{j,l,k}=\{\widetilde{z}_{j,l,k}\in\pi^{-1}(z_{j,l,k})\mid
\widetilde{I}_{i_k}^{~\epsilon_{i_{k}}}(\widetilde{z}_{j,l,k}) \cap
A_k\neq\emptyset\}.$$\smallskip


For each $k$, since
$|\widetilde{z}^{\,0}_{k}-\widetilde{z}^{\,1}_{k}|\leq 2C(k-1)+1$,
there exists $C_3>0$ (depending only on $\widetilde{a}$ and
$\widetilde{b}$) such that
\begin{equation}\label{ineq:Cjlk1}
\sharp\{\alpha\in G\mid A_k\cap
\alpha(\widetilde{M}_0)\neq\emptyset\}\leq C_3N(n).
\end{equation}
Therefore,
\begin{equation}\label{ineq:Cjlk2}
\sum_{j,l,k}\sharp C_{j,l,k}\leq C_2C_3N^2(n)\tau_m(n,z_*).
\end{equation}


We obtain
\begin{equation}\label{ineq:Cjlk3}
L_m(\widetilde{F}^n;0,1,z_*)=\widetilde{\gamma}\wedge\prod_{1\leq
i\leq
P_m(z_*)}\widetilde{\Gamma}^m_i(z_*)=\widetilde{\gamma}\wedge\prod_{j,l,k}\left(\prod_{\widetilde{z}
\in C_{j,l,k}}\widetilde{J}_{k}(\widetilde{z})\right).\end{equation}
We get
\begin{equation}\label{eq: key eq}
\left|L_m(\widetilde{F}^n;0,1,z_*)\right|\leq
c_0N^2(n)\tau_m(n,z_*),
\end{equation}
where $c_0=C_1C_2C_3$. Therefore,
\begin{equation*}
\left|ni(\widetilde{F};0,1,z_*)\right|=\left|i(\widetilde{F}^n;0,1,z_*)\right|\leq
c_0N^2(n).
\end{equation*}
This implies that, for every $n\geq1$,
\begin{equation*}\label{ineq:subline}
0<\left|i(\widetilde{F};0,1,z_*)\right|\leq
c_0\frac{N^2(n)}{n}.\end{equation*} That is
$\|F^n\|_{\mathscr{G}}\succeq \sqrt{n}.$
\end{proof}

\begin{proof}[Proof of Lemma \ref{lem:zimmer1}]
By the definition of $\mathrm{Hameo}(\Sigma_g,\mu)$, we know that
$\rho_{\Sigma_g,I}(\mu)\neq0$. Assume that $F\in\mathscr{G}=\langle
F_{1,1},\cdots,F_{s,1}\rangle\subset\mathrm{Homeo}_*(\Sigma_g,\mu)$
and $I_i$ ($1\leq i\leq s$) are the identity isotopies corresponding
to $F_{i,1}$. Recall that $\|\cdot\|_{H_1(\Sigma_g,\mathbb{R})}$ is
the norm on the space $H_1(\Sigma_g,\mathbb{R})$. Write
$$\kappa=\max_{i\in\{1,\cdots,s\}}
\left\{\left\|\rho_{\Sigma_g,I_i}(\mu)\right\|_{H_1(\Sigma_g,\mathbb{R})}\right\}.$$
As $\rho_{\Sigma_g,I}(\mu)\neq0$ and $F\in\mathscr{G}$, we have
$\kappa>0$.

For every $n\in\mathbb{N}$, if $I^n$ is homotopic to
$\prod\limits_{s=1}^{N(n)}I^{\;\epsilon_{i_{s}}}_{i_s}$, then we
have
$$n\cdot\|\rho_{\Sigma_g,I}(\mu)\|_{H_1(\Sigma_g,\mathbb{R})}=
\|\rho_{\Sigma_g,I^n}(\mu)\|_{H_1(\Sigma_g,\mathbb{R})}=
\left\|\sum_{s=1}^{N(n)}\rho_{\Sigma_g,I^{\;\epsilon_{i_{s}}}_{i_s}}(\mu)\right\|_{H_1(\Sigma_g,\mathbb{R})}\leq
\kappa\cdot N(n).$$ Hence $\|F^n\|_{\mathscr{G}}\succeq n$. On the
other hand, we have $\|F^n\|_{\mathscr{G}}\preceq n$, which
completes the proof.
\end{proof}\smallskip

\begin{proof}[Proof of Lemma \ref{lem:zimmer2}]
It is sufficient to prove that $\|F^n\|_{\mathscr{G}}\succeq n$. We
use the notations in the proofs of Theorem \ref{prop:F is not
constant if the contractible fixed points is finite} and Lemma
\ref{lem:zimmer program}.

If $\sharp\mathrm{Fix}_{\mathrm{Cont},I}(F)=+\infty$, assume that
$X\subset\mathrm{Fix}_{\mathrm{Cont},I}(F)$, $I'$, $Y=\{a,b\}\subset
X$,$I_Y'$ are the notations defined in the proof for the case
$\sharp\mathrm{Fix}_{\mathrm{Cont},I}(F)=+\infty$ of Theorem
\ref{prop:F is not constant if the contractible fixed points is
finite}. If $\sharp\mathrm{Fix}_{\mathrm{Cont},I}(F)<+\infty$, for
convenience, we require $a=\alpha(\lambda)$, $b=\omega(\lambda)$,
and $I_Y'=I'$ where $\alpha(\lambda)$, $\omega(\lambda)$ and $I'$
are the notions defined in the proof for the case
$\sharp\mathrm{Fix}_{\mathrm{Cont},I}(F)<+\infty$ of Theorem
\ref{prop:F is not constant if the contractible fixed points is
finite}. Suppose that $\widetilde{I}'$, $\widetilde{I}_Y'$ are
respectively the lifts of $I'$ and $I_Y'$ to $\widetilde{M}$. Choose
a lift $\widetilde{a}$ of $a$ and a lift $\widetilde{b}$ of $b$. We
know that $I_{\mu}(\widetilde{F};a,b)\neq0$. As
$F\neq\mathrm{Id}_{\Sigma_g}$ and $\mu$ has full support, by Item
\textbf{A2} in the proof of Theorem \ref{prop:F is not constant if
the contractible fixed points is finite}, we can choose $z_*\in
\mathrm{Rec}^+(F)\setminus X$, such that $\rho_{\Sigma_g,I}(z_*)$
and $i(\widetilde{F};\widetilde{a},\widetilde{b},z_*)$ exist, and
$i(\widetilde{F};\widetilde{a},\widetilde{b},z_*)$ is not zero.

Suppose now that $z\in \Sigma_g\setminus X$. By the items 3 and 5 of
Theorem \ref{thm:O.Jaulent}, we know that $I(z)$ and $I_Y'(z)$ are
homotopic in $\Sigma_g$. Hence, for every $n\in\mathbb{N}$,
$I^{(n)}(z)=\prod_{j=1}^{N(n)}I^{~\epsilon_{i_j}}_{i_j}(z)$ is
homotopic to $(I_{Y}')^{n}(z)$ in $\Sigma_g$.

Recall that $\widetilde{I}^{(n)}$ is the lift of $I^{(n)}$ to
$\widetilde{M}$ and $\widetilde{M}_0$ is a closed fundamental domain
with regard to $G$. Obviously, there exists $C'>0$ independent of
$z$ such that
\begin{equation*}
\sharp\{\alpha\in G\mid \widetilde{I}^{(n)}(z)\cap
\alpha(\widetilde{M}_0)\neq\emptyset\}\leq C'N(n).
\end{equation*}

Observe that $N(k)-N(1)\leq N(k+1)\leq N(k)+N(1)$ for every $1\leq
k< n$.
This implies that there exists $C'_2>0$, independent of $z$, such
that
\begin{equation}\label{homotopy finiteness}
\sharp\{\alpha\in G\mid (\widetilde{I}_{Y}')^{n}(z)\cap
\alpha(\widetilde{M}_0)\neq\emptyset\}\leq C'_2N(n).
\end{equation}

Assume that $C_{j,l,k}$, $\widetilde{\gamma}$,
$\widetilde{I}'^{(n)}$, $C_1$ and $C_2$ are the notations in the
proof of Lemma~\ref{lem:zimmer program}.

Observing that $(\widetilde{I}_{Y}')^{n}$ and $\widetilde{I}'^{(n)}$
are two isotopies from $\mathrm{Id}_{\widetilde{M}}$ to
$\widetilde{F}^n$ which fix $\widetilde{a}$ and $\widetilde{b}$, by
Remark \ref{rem:some result of of sphere delete three points},
$(\widetilde{I}_{Y}')^{n}$ is homotopic to $\widetilde{I}'^{(n)}$ in
$\widetilde{M}\setminus\{\widetilde{a},\widetilde{b}\}$.

Recall that if $z\in X\setminus \{a,b\}$, then $\gamma\wedge
I_Y'(z)=0$ (see the proof of Theorem \ref{prop:F is not constant if
the contractible fixed points is finite}). For every $z\in
\Sigma_g\setminus X$, if there is at least one of the ends of
$\widetilde{I}'^{(n)}(\widetilde{z})$ on $\widetilde{\gamma}$, we
construct a new path by extending
$\widetilde{I}'^{(n)}(\widetilde{z})$ a little bit such that the
ends of the new path are not on $\widetilde{\gamma}$ and denote the
new one still by $\widetilde{I}'^{(n)}(\widetilde{z})$. Consider the
value
$V(\widetilde{z})=|\widetilde{\gamma}\wedge\widetilde{I}'^{(n)}(\widetilde{z})|+2$.
For every $z\in \Sigma_g\setminus\{a,b\}$, by Inequality
\ref{homotopy finiteness}, we have

\begin{equation}\label{ineq:N(n)}
\sharp\{\widetilde{z}\in\pi^{-1}(z)\mid V(\widetilde{z})\neq0\}\leq
C'_2N(n).
\end{equation}\smallskip

Write
\begin{eqnarray*}
C\,'_{j,l}&=&\{\widetilde{z}\in\pi^{-1}(z_{j,l,N(n)})\mid
V(\widetilde{z})>2\},\\
C'_{j,l,k}&=&\{\widetilde{z}_{j,l,k}\in\pi^{-1}(z_{j,l,k})\mid\widetilde{I}_{i_k}^{\,\epsilon_{i_k}}(\widetilde{z}_{j,l,k})
\cap(A_{k}(\{r=0\}\cup \{r=1\})\neq\emptyset\}.\end{eqnarray*}
Obviously,
$$\sharp C'_{j,l,k}\leq 2 C_2,\quad \sum_{j,l,k}\sharp
C'_{j,l,k}\leq 2C_2N(n)\tau_m(n,z_*).$$

Under the hypotheses of Lemma \ref{lem:zimmer2}, we want to improve
the value $N^2(n)$ to $N(n)$ in Inequality \ref{eq: key eq}.

Based on the analyses above, we get
$$L_m(\widetilde{F}^n;\widetilde{a},\widetilde{b},z_*)=\widetilde{\gamma}\wedge
\widetilde{\Gamma}^m_{\widetilde{I}'^{(n)},z_*}=
\widetilde{\gamma}\wedge\widetilde{\Gamma}^m_{\left(\widetilde{I}\,'_Y\right)^{\,n},z_*}
=\pi(\widetilde{\gamma})\wedge\Gamma^m_{\left(I\,'_Y\right)^{\,n},z_*}.$$

To estimate the value
$L_m(\widetilde{F}^n;\widetilde{a},\widetilde{b},z_*)$, we need to
consider the isotopy $\widetilde{I}'^{(n)}$. 
If the immersed squares $A_k$ $(k=1,\cdots, N(n))$ are uniformly
bounded in $n$, then by the inequalities \ref{ineq:Cjlk1}$-$\ref{eq:
key eq}, we are done. We explain now the case where the squares
$A_k$ are not bounded in $n$, that is, $\max_{1\leq k\leq
N(n)}|\widetilde{z}^{\,0}_{k}-\widetilde{z}^{\,1}_{k}|\rightarrow+\infty$
as $n\rightarrow+\infty$, is also true. Inequality \ref{ineq:N(n)}
shows that, for every fixed $j$ and $l$, it is sufficient to
consider at most $C_2'N(n)$ elements of $C'_{j,l}$. We can write
$\widetilde{I}'^{(n)}(\widetilde{z})$ as the concatenation of $N(n)$
sub-paths $\widetilde{J}_k(\widetilde{z})\, (k=1,\cdots,N(n))$.
Obviously, for every $\widetilde{z}\in C'_{j,l}$, we have
\begin{eqnarray}\label{eq:L_m F^n}
  \qquad\left|L_m(\widetilde{F}^n;\widetilde{a},\widetilde{b},z_*)
  \right|&\leq&\left|\widetilde{\gamma}\wedge\prod_{j,l,k}\left(\prod_{\widetilde{z}
\in
C'_{j,l,k}}\widetilde{J}_k(\widetilde{z})\right)\right|+\left|\widetilde{\gamma}\wedge
\prod_{j,l}\left(\prod_{\widetilde{z} \in
C'_{j,l}}\widetilde{I}'^{(n)}(\widetilde{z})\right)\right|.
\end{eqnarray}

We know that the value of the first part of the right hand side of
Inequality \ref{eq:L_m F^n} is less than $2C_1C_2N(n)\tau_m(n,z_*)$.
Hence, to explore the relation of the bound of
$\left|L_m(\widetilde{F}^n;\widetilde{a},\widetilde{b},z_*)
  \right|$ and the power of $N(n)$, we can suppose that the path
  $\widetilde{J}_k(\widetilde{z})$ never meets
  $A_k(\{r=0\}\cup\{r=1\})$ for every $k$ and $\widetilde{z}\in
  C'_{j,l}$. As the isotopies $\widetilde{I}_i$~($1\leq
i\leq s$) commutes with the covering transformations, 
it is easy to prove that there is a positive number $C'_1$ such that
$$\sum_{z\in C'_{j,l}}V(\widetilde{z})\leq C'_1C'_2 N(n).$$ Hence,
\begin{eqnarray*}
  \left|\widetilde{\gamma}\wedge
\left(\prod_{j,l}\prod_{\widetilde{z} \in
C'_{j,l}}\widetilde{I}'^{(n)}(\widetilde{z})\right)\right|
  &=& \left|\widetilde{\gamma}\wedge
\left(\prod_{j,l}\prod_{\widetilde{z} \in
C'_{j,l}}\prod_{k=1}^{N(n)}\widetilde{J}_k(\widetilde{z})\right)\right|\\
  &&\\
  &\leq&C_1'C_2'N(n)\tau_m(n,z_*).
\end{eqnarray*}

Therefore, there exists $C''_1>\max\{C_1,C'_1\}$, where $C_1$
defined in Formula \ref{eq:C_1}, such that
$$\left|L_m(\widetilde{F}^n;\widetilde{a},\widetilde{b},z_*)
  \right|\leq C''_1(2C_2+C_2')N(n)\tau_m(n,z_*).$$

It implies that
\begin{equation*}
0<\left|i(\widetilde{F};\widetilde{a},\widetilde{b},z_*)\right|\leq
c'_0\frac{N(n)}{n}\quad \text{for every}\quad n\geq1,
\end{equation*} where $c_0'=C''_1(2C_2+C_2')$.
We deduce that $\|F^n\|_{\mathscr{G}}\succeq n.$ Therefore,
$\|F^n\|_{\mathscr{G}}\thicksim n$, which completes the proof.
\end{proof}

\subsection{Counter-examples}

We construct two examples for $\mathrm{Supp}(\mu)\neq M$.

\begin{exem}\label{exem:T2 and supp(u)notM}
Consider the following smooth identity isotopy on $\mathbb{R}^2$:
$\widetilde{I}=(\widetilde{F}_t)_{t\in[0,1]}:
(x,y)\mapsto(x+\frac{t}{10}\cos(2\pi y),y+\frac{t}{10}\sin(2\pi
y))$. It induces an identity smooth isotopy $I=(F_t)_{t\in[0,1]}$ on
$\mathbb{T}^2$. Let $\mu$ have a constant density on
$\{(x,y)\in\mathbb{T}^2\mid y=0\,\,\mathrm{or}\,\, y=\frac{1}{2}\}$
and vanish on elsewhere. Obviously, $\rho_{\mathbb{T}^2,I}(\mu)=0$
but $\mathrm{Fix}_{\mathrm{Cont},I}(F_1)=\emptyset$.
\end{exem}

In the case where $g=1$ and $\mathrm{Supp}(\mu)\neq M$, Example
\ref{exem:T2 and supp(u)notM} tells us that there is even no sense
to talk about the action function in some special cases. The
following example, belonging to Le Calvez, implies that Theorem
\ref{prop:F is not constant if the contractible fixed points is
finite} is no longer true in the case
where $g>1$ and $\mathrm{Supp}(\mu)\neq M$. 

\begin{exem}[\cite{P1}, page 73]\label{exem:M and supp(u)notM}
Let $M$ be a closed orientated surface with $g=2$. Le Calvez
constructed a smooth identity isotopy $I=(F_t)_{t\in[0,1]}$, two
points $a$ and $b$ ($z_3$ and $z_4$ respectively in his example)
which are the only two contractible fixed points of $F_1$, and a
point $c$ ($z_{1,3}'$ in his example) which is a periodic point of
$F_1$ with period 20 and an arc $\prod_{0\leq i\leq 19}I(F_1^i(c))$
which is homologic to zero in $M\setminus\{a,b\}$.

We now define the measure
$\mu=\frac{1}{20}\sum_{i=0}^{19}\delta_{F_1^i(c)}$, where $\delta_z$
is the Dirac measure. The fact that the arc $\prod_{0\leq i\leq
19}I(F_1^i(c))$ is homologic to zero in $M\setminus\{a,b\}$ means
that $\rho_{M,I}(\mu)=0$ and that $I_\mu(\widetilde{F};a,b)=0$,
i.e., the action function is constant.
\end{exem}\bigskip

\end{document}